\colorlet{fadedred}{red!30!gray!80!white}
\colorlet{fadedblue}{blue!70!gray!80!white}
\colorlet{fadedgreen}{green!60!blue!80!gray}
\definecolor{beige}{RGB}{245,245,220}
\newcommand{\D}{\mathbb{D}}
\let\classAND\AND
\let\AND\relax
\let\AND\classAND
\definecolor{Turquoise}{RGB}{64,224,208}
\definecolor{RoyalBlue}{RGB}{65,105,225}
\definecolor{RubineRed}{RGB}{209,0,86}
\newcommand{\K}{{\mathbb{K}^\circ}}
\newcommand{\MR}{\mathbb{R}}
\newcommand{\intset}[2]{\left[#1,#2\right]_{\mathbb{N}}}
\begin{document}

\title{Newton-PIPG: A Fast Hybrid Algorithm for Quadratic Programs in Optimal Control\thanks{Submitted to the editors \today.
}}
\author{
  Dayou Luo\thanks{Department of Applied Mathematics, University of Washington, Seattle, WA 98195 (\email{dayou@uw.edu}). Corresponding author.}
  \and Yue Yu\thanks{Department of Aerospace Engineering and Mechanics, University of Minnesota Twin Cities, Minneapolis, MN 55455 (\email{yuy@umn.edu}).}
  \and Maryam Fazel\thanks{Department of Electrical and Computer Engineering, University of Washington, Seattle, WA 98195 (\email{mfazel@uw.edu}).}
  \and Beh\c{c}et A\c{c}{\i}kme\c{s}e\thanks{William E. Boeing Department of Aeronautics and Astronautics, University of Washington, Seattle, WA 98195 (\email{behcet@uw.edu}).}
}

\headers{Newton-PIPG for QP in Optimal Control}{D. Luo, Y. Yu, M. Fazel, and B. A\c{c}{\i}kme\c{s}e}

\maketitle
\begin{abstract}                          %
We propose Newton-PIPG, an efficient method for solving quadratic programming (QP) problems arising in optimal control, subject to additional set constraints. Newton-PIPG integrates the Proportional-Integral Projected Gradient (PIPG) method with the Newton method, thereby achieving both global convergence and local quadratic convergence. The PIPG method, an operator-splitting algorithm, seeks a fixed point of the PIPG operator. Under mild assumptions, we demonstrate that this operator is locally smooth, enabling the application of the Newton method to solve the corresponding nonlinear fixed-point equation. Furthermore, we prove that the linear system associated with the Newton method is locally nonsingular under strict complementarity conditions. To enhance efficiency, we design a specialized matrix factorization technique that leverages the typical sparsity of optimal control problems in such systems. Numerical experiments demonstrate that Newton-PIPG achieves high accuracy and reduces computation time, particularly when feasibility is easily guaranteed.
\end{abstract}
\begin{keywords}
Convex optimization, Quadratic programming, Optimal control, Operator splitting methods, Newton method
\end{keywords}

\begin{MSCcodes}
 49N10, 49M15, 90C20 
\end{MSCcodes}

\section{Introduction}
The development of techniques for solving Quadratic programming (QP) problems is a key enabler for advancing optimal control research, as efficiently solving these problems is often essential and a bottleneck in real-world optimal control applications. For example, Model Predictive Control (MPC), a framework for implementing optimal control, often relies on solving a series of QP problems 
\cite{houska2011auto,accikmecse2011robust, jones2012fast,zometa2013muao}. In addition, Sequential Convex Programming (SCP), a widely used algorithm for nonlinear optimal control, also relies on solving QP problems \cite{neunert2016fast,szmuk2020successive,malyuta2022convex,elango2024successive}. Fast QP solvers are particularly beneficial in these applications, especially in scenarios requiring real-time implementation \cite{malyuta2022convex,kamath2023customized}.

We consider the following QP problem, which includes additional set constraints and frequently arises in optimal control applications:
\newline
\textbf{Problem 1}:
\begin{equation}
\label{equ: original problem}
\begin{array}{ll}
\underset{z}{\text { minimize }} &\sum_{i=0}^{N+1} \sum_{j=1}^{m_i} \frac{1}{2} \rho_{i j}\left\|z_{i j}\right\|^2+q^T  z \\
\text { Subject to } & H_E z+g_E=0, \\
& H_I z+g_I \succeq 0, \\
& z_{i j} \in \D_{i j}=\left\{x \in R^{n_{i j}} \mid \Phi_{i j}(x) \preceq 0\right\},
\end{array}
\end{equation}
 where $z_{ij} \in \mathbb{R}^{n_{z_{ij}}}$, $z_i = \left(z_{i 1}, z_{i 2}, \cdots, z_{i m_i}\right) \in \mathbb{R}^{n_{z_i}}$ , and $z = \left(z_0, z_1, \ldots, z_{N+1}\right) \in \mathbb{R}^{n_z}$. Here, \( z_i \) represents the combination of state and input variables at the \( i \)-th time 
 point. The vector \( z_i \) consists of \( m_i \) components, where each component \( z_{ij} \) has a 
 quadratic weight  \( \rho_{ij} > 0 \),
 The notations \( \preceq \) and \( \succeq \) denote element-wise inequalities for vectors. Vectors $q \in \mathbb{R}^{n_z}$, $g_E \in \mathbb{R}^{n_E}$, and $g_I \in \mathbb{R}^{n_I}$, as well as matrices $H_E \in \mathbb{R}^{n_E \times n_z}$ and $H_I \in \mathbb{R}^{n_I \times n_z}$, are constants. The function $\Phi_{ij}$ is a smooth vector-valued function on $\mathbb{R}^{n_{z_{ij}}}$, defining the convex set $\D_{ij}$ for additional constraints. We will later add requirements on $\Phi_{ij}$ to include commonly used sets, such as balls, second-order cones, boxes, and half-spaces.

Additionally, we require that both $H_E$ and $H_I$ follows the pattern below:

\[\begin{pmatrix}
A_0 & B_0 & 0 & \cdots & \cdots & 0 & 0 \\
0 & A_1 & B_1 & 0 & & \vdots & \vdots \\
0 & 0 & A_2 & B_2 & \ddots & \vdots & \vdots \\
\vdots & & \ddots & \ddots & \ddots & 0 & 0 \\
\vdots & & & \ddots & A_{N-1} & B_{N-1} & 0 \\
0 & \cdots & \cdots & 0 & 0 & A_N & B_N \\
\end{pmatrix} \]
where $A_i$ and $B_i$ are constant matrices. The number of columns of $A_i$ is $n_{z_{i}}$ and that of $B_i$ is $n_{z_{i+1}}$. The number of rows for $A_i$ and $B_i$ are determined by the linear equality $H_E z + g_E = 0$ and the inequality $H_I z + g_I \geq 0$. We denote these numbers as $n_{E_i}$ for the equalities and $n_{I_i}$ for the inequalities in the $i$-th row of the block matrices $H_E$ and $H_I$, respectively.

The sparsity patterns of \( H_E \) and \( H_I \) are typical for optimal control optimization problems. These matrices define equality and inequality constraints among consecutive time points, while the set \( \D_{ij} \) includes constraints for states and inputs at each time point.

There are a few differences between Problem~\hyperref[equ: original problem]{1} and general optimal control QP problems. First, our objective function requires strong convexity, and the quadratic term must be homogeneous with respect to the constraint \( \D_{ij} \). Specifically, for dimensions of \( z \) constrained by the same set \( \D_{ij} \), the corresponding quadratic cost coefficients must be identical. Second, we restrict the set \( \D_{ij} \) to specific families of convex sets. Possible relaxations of these requirements are discussed in Section \ref{sec: application and relaxation}.

Widely used methods for solving Problem~\hyperref[equ: original problem]{1} fall into two primary categories: second-order methods and operator-splitting methods, each with their own advantages and disadvantages. Second-order methods, such as interior-point methods and active-set methods, utilize the curvature information of the optimization problem. These methods typically converge in a modest number of iterations and are robust when the problem approaches infeasibility. Commonly used software in this category includes MOSEK~\cite{andersen2000mosek}, Gurobi~\cite{gurobi}, and ECOS~\cite{domahidi2013ecos}. For further theoretical discussions on second-order methods, we refer the reader to \cite{andersen2000mosek,boyd2004convex,nocedal1999numerical}.

One drawback of second-order methods lies in their high computational cost per iteration, as each step requires solving a linear system. This issue becomes significant as the dimension of the optimization variables increases. Additionally, while interior-point solvers are generally effective, they typically cannot directly handle Problem 1 in its original form and require the use of parsers, such as Yamlip \cite{Lofberg2004} or CVX \cite{cvx}, to transform the problem into a solver-compatible form. This transformation process adds overhead and increases the effort needed for verification, validation, and maintenance.

Another approach to solving Problem~\hyperref[equ: original problem]{1} is the operator-splitting method, which constructs an update operator applied at each iteration, ensuring that the iterations converge to the fixed point of this operator. Unlike interior-point methods, operator-splitting methods eliminate the need for matrix factorization, resulting in significantly lower per-iteration computational costs. Additionally, these methods do not require external parsers and are typically implemented with a smaller codebase \cite{kamath2023customized}. For a comprehensive review of operator-splitting methods, see \cite{jiang2023bregman}. However, one drawback of operator-splitting methods is that they often require a large number of iterations to converge, especially when the problem is ill-conditioned. 




Our main contribution is the development of Newton-PIPG, a hybrid method that integrates the Proportional-Integral Projected Gradient (PIPG) method~\cite{yu2020proportional, yu2022extrapolated}, a representative operator-splitting approach, with the Newton method. This approach is motivated by the insight that finding a fixed point for the updating operator in operator-splitting methods is equivalent to solving a nonlinear fixed point equation. Given the Newton method’s effectiveness in solving such equations, we design a Newton step for this fixed-point problem, with PIPG providing a warm start to ensure global convergence. Compared to second-order methods, Newton-PIPG reduces the need for matrix factorization and avoids the requirement for additional parsers. Additionally, in contrast to operator-splitting methods, the Newton step reduces the number of iterations required, enhancing both accuracy and speed.

We also provide theoretical guarantees for the Newton-PIPG method. Specifically, we demonstrate that under an extended linear independence constraint qualification and strict complementarity condition, the Newton step is well-defined, the associated matrix is nonsingular, and the Newton-PIPG algorithm enjoys global convergence with local quadratic convergence behavior.

Furthermore, we tailor our algorithm for optimal control problems. Recognizing that the most computationally expensive step is solving a large linear system in the Newton method, we exploit the sparsity inherent in optimal control problems to design an efficient factorization strategy, significantly reducing computation time.


\subsection{Related Work}
Several studies have focused on integrating operator splitting methods with Newton-type methods. These approaches include directly using BFGS \cite{dennis1996numerical} to solve the fixed-point equation from the proximal point method \cite{chen1999proximal}, and employing the ADMM method in conjunction with a semi-smooth Newton method \cite{ali2017semismooth}, where linear systems are solved using GMRES \cite{saad1986gmres}. Other approaches involve hybridizing operator-splitting methods with quasi-Newton methods to balance global convergence and local acceleration. Popular choices for quasi-Newton methods include BFGS \cite{themelis2019supermann,sopasakis2019superscs} and Anderson acceleration \cite{zhang2020globally}.

The main difference between our work and others is that we focus on a specific type of optimal control QP problem. This focus provides several advantages for our method. First, we demonstrate that, under mild assumption, the update operator in PIPG is differentiable in a neighborhood of the solution. This suggests that the semi-smooth Newton method essentially acts as a Newton method, offering a quadratic convergence guarantee. Details of this differentiability are presented in Theorem \ref{thm: nonsingularity}.

Second, rather than using quasi-Newton methods or iterative solvers, we directly handle the linear system in the Newton step using an efficient matrix factorization technique. This approach leverages the sparsity pattern of the matrix, resulting in faster computation.

Third, our algorithm offers a stronger theoretical guarantee. We prove, rather than assume as in \cite[Assumption A6]{ali2017semismooth}, that the matrix involved in the Newton step is nonsingular in a local neighborhood of the solution under a generalized Linearly Independent Constraint Qualification (LICQ) assumption, which ensures the robustness of the algorithm. Moreover, our algorithm guarantees convergence even when certain key assumptions are not satisfied.

Beyond these distinctions, our method can also be viewed as an extension of existing Interior Point approaches, particularly the algorithm in \cite{wang2009fast}. The matrix factorization technique used in the Newton step is inspired by the blocked Cholesky decomposition method introduced in \cite{wang2009fast}. Other methods, such as \cite{wright1993interior}, handle similar matrices with different linear algebra techniques but maintain the same computational complexity order for matrix factorization as \cite{wang2009fast}. 
A key distinction between our approach and those in \cite{wang2009fast,wright1993interior} is our use of operator-splitting methods. This strategy not only ensures the applicability of our method to QPs with additional set constraints but also reduces the number of matrix factorizations required for convergence.

\subsection{Road Map} The structure of this paper is as follows. Section~\ref{sec: background} reviews the PIPG method and the Newton method. Section~\ref{sec:Newton} presents the Newton step for the PIPG operator, its theoretical properties, and an efficient factorization method for the matrix involved in the Newton step. Section~\ref{sec:global convergence} discusses the global and local convergence of the Newton-PIPG algorithm. Section~\ref{sec: application and relaxation} outlines the restrictions of our algorithm and possible relaxations. Section~\ref{sec:Numerical implementation} provides implementation details, including the design of termination criteria, and presents numerical experiments comparing our method with other state-of-the-art QP solvers.

\subsection{Notation}
We use the following notation throughout the paper. $\mathbb{R}$ denotes the set of real numbers, with $\mathbb{R}_{+}$ and $\mathbb{R}_{-}$ representing nonnegative and nonpositive real numbers, respectively. The set of $n \times m$ real matrices is $\mathbb{R}^{n \times m}$, and $\mathbb{R}^n$ denotes $n$-dimensional real vectors. The identity matrix is $I$, and the zero vector in $\mathbb{R}^n$ is $0^n$. Vectors are treated as columns, and we write $\left(v_1, v_2, \ldots, v_n\right)$ for a vertically stacked column vector when unambiguous.
For the $i$-th element of a vector ${v}$, we use the notation $v_{(i)} \in \mathbb{R}.$ For the element on the $i$-th row and the $j$-th column of a matrix $M$, we denote it as $M_{(i,j)} \in \mathbb{R}$. We define vector inequalities as element-wise comparisons. For example, for vectors \( a \) and \( b \), \( a \preceq b \) indicates that \( a_{(i)} \leq b_{(i)} \) for all \( i \), and \( a \succeq b \) indicates that \( a_{(i)} \geq b_{(i)} \) for all \( i \). We use $\operatorname{blkdiag}(A_1, A_2, \cdots, A_n)$ to build a block diagonal matrix with $A_1$ to $A_n$ as the diagonal block matrices. $\|\cdot \|$ denotes the Euclidean norm for vectors and matrices. For a convex set $\mathbb{D}$, we denote $\Pi_{\mathbb{D}}(z)$ as the projection of $z$ onto set $\mathbb{D}$ and for $z \in \mathbb{D}$, we denote the normal cone of $\mathbb{D}$ at the point $z$ as $N_{\mathbb{D}}(z)$. For the definition of the normal cone, we refer to \cite[Chapter 2.1]{borwein2006convex}. For general closed set $\D$, we denote the distance of a point $z_0$ to $\D$ as $d_\D = \operatorname{min}\{\|z - z_0\|| z \in \D\},$ and we use $\operatorname{ri}\D$ as the relative interior of $\D.$ We denote $\D_1 \times \D_2$ as the direct product of set $\{\D_1, \D_2\},$ and  $\prod_{i=0}^N \D_i$ as the direct product of set $\{\D_1, \D_2, \cdots, \D_N\}$. We use the notation $o(\epsilon)$ to denote a function $f(\epsilon)$ that $\lim_{\epsilon \to 0} \frac{f(\epsilon)}{\epsilon} = 0$. We use $\intset{1}{n}$ for the integer set $\{1,2,\cdots, n\}.$ We denote \(\operatorname{span}(v)\) as the set of all linear combinations of the vectors in the set \( v \).

\section{Background}
\label{sec: background}

To simplify the notation, we rewrite Problem~\hyperref[equ: original problem]{1} as follows 
\newline
\textbf{Problem 2}:
\label{equ: PIPG problem}
\begin{equation}
\begin{array}{cl}
\underset{z}{\operatorname{minimize}} & \frac{1}{2} z^{\top} P z+q^{\top} z \\
\text { subject to } & H z-g \in \mathbb{K}, \quad z \in \mathbb{D},
\end{array}
\end{equation}
Here, the matrix \(H\) is constructed by rearranging the rows of the matrix $H_E$ and $H_I$, resulting in the following form:
\begin{equation}
\label{equ: H def}
H = \begin{pmatrix}
A_{0} & B_0 & 0 & \cdots & \cdots & 0 & 0 \\
0 & A_1 & B_1 & 0 & & \vdots & \vdots \\
0 & 0 & A_2 & B_2 & \ddots & \vdots & \vdots \\
\vdots & & \ddots & \ddots & \ddots & 0 & 0 \\
\vdots & & & \ddots & A_{N-1} & B_{N-1} & 0 \\
0 & \cdots & \cdots & 0 & 0 & A_N & B_N \\
\end{pmatrix}.    
\end{equation}
In this formulation, the notation \( (A_i, B_i) \) is reused to represent the block matrices that combine the original blocks from \(H_E\) and \(H_I\), corresponding to the equality and inequality constraint coefficients at the \(i\)-th time point. 
Setting \(n_i = n_{E_i} + n_{I_i}\), the matrices \(A_i \in \mathbb{R}^{n_i \times n_{z_i}}\) and \(B_i \in \mathbb{R}^{n_i \times n_{z_{i+1}}}\). The cone \( \mathbb{K} \) is defined as \( \prod_{i=0}^N (0^{n_{E_i}} \times \mathbb{R}_+^{n_{I_i}})\), and the vector \( g \) is from vectors \(g_E \text{ and } g_I\), ensuring that \(H z - g \in \mathbb{K}\) enforces both \(H_E z + g_E = 0\) and \(H_I z + g_I \geq 0\). From now on, \(A_i\) and \(B_i\) refer to the combined blocks in \(H\).

The set \( \mathbb{D} \) is defined as \( \prod_{i = 0}^{N+1} \prod_{j = 0}^{m_i} \D_{ij} \). The dual cone \( \mathbb{K}^\circ \) is \( \prod_{i=0}^N (\mathbb{R}^{n_{E_i}} \times \mathbb{R}_{-}^{n_{I_i}}) \). The dimension of \( \mathbb{K} \) is denoted by \( n_w  = \sum_{i = 0}^Nn_{{E_i}} + n_{{I_i}}\).

\subsection{Proportional-Integral Projected Gradient}
\label{sec:PIPG}
As discussed in the introduction, we propose a new method that combines the Proportional-Integral Projected Gradient (PIPG) method with the Newton method. In this section, we provide a brief overview of the PIPG method.

Developed in \cite{yu2022extrapolated}, the PIPG algorithm's update rule for Problem~\hyperref[equ: PIPG problem]{2} is defined as:
\begin{equation}
\label{PIPG iter}
\begin{aligned}
& z^{k+1}=\pi_{\mathbb{D}}\left[z^k-\alpha\left(P z^k+q+H^{\top} w^k\right)\right] \\
& w^{k+1}=\pi_{\mathbb{K}^{\circ}}\left[w^k+\beta\left(H\left(2 z^{k+1}-z^k\right)-g\right)\right]
\end{aligned}
\end{equation}
where $\pi_\D : \mathbb{R}^{n_z} \rightarrow \mathbb{R}^{n_z}$ is the projection operator on to set $\D,$ and $\pi_{\mathbb{K}^{\circ}}: \mathbb{R}^{n_w} \rightarrow \mathbb{R}^{n_w}$ is the projection operator on to set $\K = \prod_{i=0}^N \mathbb{R}^{n_{E_i}} \times \mathbb{R}_{-}^{n_{I_i}}.$
We denote the operator $T:\mathbb{R}^{n_z}\times \mathbb{R}^{n_w} \rightarrow \mathbb{R}^{n_z + n_w} $ for the PIPG operator and express the PIPG update in \eqref{PIPG iter} as
\begin{equation}
\label{equ:iteration}
(z^{k+1}, w^{k+1}) = T(z^{k}, w^{k}). 
\end{equation} 
Let \( \operatorname{Fix}(T) \) denote the set of fixed points of operator \( T \), that is, \( \operatorname{Fix}(T) = \{(z, w) \in \mathbb{R}^{n_z + n_w} \mid (z, w) = T(z, w)\} \). The following theorem for the convergence of the PIPG method is a direct consequence of Theorem \ref{thm: extended-licq}, Theorem \ref{thm: convergence of PIPG}, and Theorem \ref{thm: km-convergence}, discussed later in Section \ref{sec:global convergence}:
\begin{theorem}
\label{thm: pipg fixed-point}
 Suppose assumptions in Theorem \ref{thm: extended-licq} hold. Let $(z^k,w^k)$ be computed as \eqref{equ:iteration}, \( \alpha, \beta > 0 \) and \( \alpha\|P\| + \alpha\beta\|H\|^2 < 1 \). If \( \ \operatorname{Fix}(T) \) is nonempty, the sequence \( (z^k, w^k) \) will converge to a point \((z^\star, w^\star) \in \operatorname{Fix}(T) \), and $z^\star$ is a solution of Problem~\hyperref[equ: PIPG problem]{2}.
\end{theorem}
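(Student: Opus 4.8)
The plan is to obtain the statement by assembling three ingredients in the standard operator-splitting template: (i) rewrite the PIPG update \eqref{PIPG iter} as the iteration of a single operator $T$ that is \emph{averaged} in an appropriate weighted norm; (ii) invoke a Krasnosel'skii--Mann-type convergence theorem for averaged operators with nonempty fixed-point set; and (iii) identify the elements of $\operatorname{Fix}(T)$ with primal--dual solutions of Problem~\hyperref[equ: PIPG problem]{2}. Ingredients (i) and (ii) are exactly Theorem~\ref{thm: convergence of PIPG} and Theorem~\ref{thm: km-convergence}, so in the written proof they reduce to citations; the extended-LICQ hypotheses of Theorem~\ref{thm: extended-licq} enter only to guarantee that the constraint data is well posed, so that the fixed-point/solution correspondence is nondegenerate.

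For (i), I would introduce the block preconditioner $M=\operatorname{blkdiag}\!\big(\tfrac{1}{\alpha} I - P,\ \tfrac{1}{\beta} I\big)$ and recognize \eqref{equ:iteration} as a (preconditioned) forward--backward / proximal step applied to the monotone KKT operator of Problem~\hyperref[equ: PIPG problem]{2}. The hypothesis $\alpha\|P\| + \alpha\beta\|H\|^2 < 1$ is precisely what is needed, via a Schur-complement / Young's-inequality estimate on the off-diagonal coupling through $H^\top$, to make $\langle\cdot,\cdot\rangle_M$ a genuine inner product and the resulting operator firmly nonexpansive — equivalently, $T$ is $\tfrac{1}{2}$-averaged — in the $M$-norm. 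Since $M\succ0$, the $M$-norm and the Euclidean norm are equivalent, so averagedness in the $M$-norm is all that is needed downstream.

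For (ii) and (iii): with $T$ averaged and $\operatorname{Fix}(T)\neq\emptyset$, Theorem~\ref{thm: km-convergence} yields that $(z^k,w^k)$ is Fej\'er monotone with respect to $\operatorname{Fix}(T)$ in the $M$-norm and converges to some $(z^\star,w^\star)\in\operatorname{Fix}(T)$. It then remains to unwind the fixed-point equations: substituting $(z^{k+1},w^{k+1})=(z^k,w^k)=(z^\star,w^\star)$ into \eqref{PIPG iter} and using the variational characterization of the projections ($u=\pi_\D(u-v)\iff v\in N_\D(u)$, and likewise for $\K$) gives $-(Pz^\star+q+H^\top w^\star)\in N_\D(z^\star)$ and $Hz^\star-g\in N_{\K}(w^\star)$. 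By conjugacy of the normal cones of the polyhedral cone $\mathbb{K}$ and its polar $\K$, the second inclusion is equivalent to $Hz^\star-g\in\mathbb{K}$, $w^\star\in\mathbb{K}^\circ$, $\langle w^\star,Hz^\star-g\rangle=0$. Together with $0\in Pz^\star+q+H^\top w^\star+N_\D(z^\star)$ these are exactly the KKT conditions of the convex Problem~\hyperref[equ: PIPG problem]{2}; since its objective is convex and the feasible set is a polyhedral-cone inclusion intersected with the convex set $\mathbb{D}$, the KKT conditions are sufficient for global optimality, so $z^\star$ solves Problem~\hyperref[equ: PIPG problem]{2}.

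The main obstacle is step (i): pinning down the exact metric in which $T$ is averaged and verifying that the stated bound $\alpha\|P\| + \alpha\beta\|H\|^2 < 1$ — rather than something more conservative — already forces positive definiteness of $M$ and firm nonexpansiveness of the relevant operator. This is a careful but essentially linear-algebraic estimate on the cross term; once it is in place, the Krasnosel'skii--Mann step and the reading-off of the KKT conditions are routine. Because the excerpt defers (i) to Theorem~\ref{thm: convergence of PIPG}, the proof here is mostly bookkeeping: cite (i), apply (ii), and verify (iii).
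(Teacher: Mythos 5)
Your plan is the same as the paper's: rewrite $T$ as $\tfrac12 I + \tfrac12\tilde T$ with $\tilde T$ nonexpansive in the $M$-norm (the paper's Theorem~\ref{thm: convergence of PIPG}), apply the Krasnosel'skii--Mann convergence theorem (Theorem~\ref{thm: km-convergence}), and read off the KKT conditions from the fixed-point equations (the ``if'' direction of Theorem~\ref{thm: extended-licq} and the remark after it). One small slip: you wrote $M$ as $\operatorname{blkdiag}(\tfrac1\alpha I - P,\ \tfrac1\beta I)$, but the paper's $M$ carries the off-diagonal blocks $-H^\top$, $-H$ — which you clearly intend, since your Schur-complement/Young estimate on the $H^\top$ coupling is exactly what makes $\alpha\|P\|+\alpha\beta\|H\|^2<1$ force $M\succ 0$.
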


For a fixed point \((z^\star, w^\star) \in \operatorname{Fix}(T)\), we denote \(z^\star\) as a primal solution and \(w^\star\) as a dual solution to Problem~\hyperref[equ: PIPG problem]{2}. The pair \((z^\star, w^\star)\) will be referred to as a primal-dual solution pair to Problem~\hyperref[equ: PIPG problem]{2}. Notably, for a point $(z^\star, w^\star)\in \operatorname{Fix}(T)$, we have
\begin{equation*}
\begin{aligned}
& z^{\star} = \Pi_{\mathbb{D}}\left[z^\star - \alpha\left(P z^\star + q + H^{\top} w^\star\right)\right],\label{equ: pipg first} \\
& Hz^{\star} - g\in \mathbb{K}.
\end{aligned}
\end{equation*}
Here the second equation is a consequence of the fact that $N_\K(w^\star) \subset K$

\subsection{Newton Method}
Theorem \ref{thm: pipg fixed-point} indicates that PIPG solves a nonlinear fixed-point equation \((z^\star, w^\star) - T(z^\star, w^\star) = 0\). Consequently, we consider applying the Newton method directly to solve this equation, given its fast local convergence rate~\cite{nocedal1999numerical}. In this section, we provide a brief review of the Newton method.

The Newton method is designed to solve 
\begin{equation*}
R(x)=0,
\end{equation*}
where \(R: \mathbb{R}^n \rightarrow \mathbb{R}^n\) is a continuously differentiable function with a Jacobian matrix $J(x)$. At iteration $x_k$, a linear approximation for $R(x_k)$ is 
$
R(x_k + p) \approx R(x_k) + J(x_k) p.
$ The Newton method chooses $p_k$ such that the linear approximation of $R$ equals zero, i.e., $p_k = -J(x_k)^{-1} R(x_k)$. The next iteration $x_{k+1}$ is defined as 
$
x_{k+1} = x_k + p_k.
$ 

The Newton method is known for its locally quadratic convergence rate, but it also has three major disadvantages: the requirement of a nonsingular Jacobian matrix, the high computational cost of computing a factorization of the Jacobian matrix, and the lack of a global convergence guarantee. Additionally, in order to apply the Newton method to the fixed-point equation of $T$, we need to assess the differentiability of the operator $T$.

In the next section, we will define the Newton step for the PIPG operator $T$. We will show that the Jacobian of the PIPG exists and that the matrix appearing in the Newton step is invertible in the neighborhood of \( T \)'s fixed point under some mild assumptions. Hence, the linear equation in the Newton step has a local solution. Additionally, we will provide a factorization strategy for the Jacobian matrix that leverages its sparse structure. The global convergence result is deferred to Section \ref{sec:global convergence}.


\section{Newton Step for PIPG}
\label{sec:Newton}
We start with a formal definition of the Newton step for PIPG. Consider the case where both $\pi_\D$ and $\pi_\K$ are differentiable. In this case,  the Jacobian of the PIPG operator can be computed using the chain rule:
\begin{equation}
\label{equ: def_J_T}
\begin{aligned}
&J_{T} := \begin{pmatrix}
\frac{\partial z^{k+1}}{\partial z^k} & \frac{\partial z^{k+1}}{\partial w^k} \\
\frac{\partial w^{k+1}}{\partial z^k} & \frac{\partial w^{k+1}}{\partial w^k}
\end{pmatrix}\\
\end{aligned}
\end{equation}
where
\begin{align*}
\frac{\partial z^{k+1}}{\partial z^k} &= J_\D (I - \alpha P), \\
 \frac{\partial z^{k+1}}{\partial w^k} &= J_\D (-\alpha H^\top), \\
\frac{\partial w^{k+1}}{\partial z^k} &= J_\K \beta H (-I + 2 J_\D (I - \alpha P))\\
&=J_\K \beta H (-I + 2 \frac{\partial z^{k+1}}{\partial z^k}), \\
\frac{\partial w^{k+1}}{\partial w^k} &=J_\K + 2 J_\K \beta H J_\D (-\alpha H^\top)\\
&=J_\K + 2 J_\K \beta H \frac{\partial z^{k+1}}{\partial w^k},
\end{align*}
and $J_\D$ and $J_\K$ are the Jacobians of projections $\pi_\D$ and $\pi_\K,$ evaluated at $z^k-\alpha\left(P z^k+q+H^{\top} w^k\right)$  and $w^k+\beta\left(H\left(2 z^{k+1}-z^k\right)-g\right)$ respectively.

If $\pi_\D$ and $\pi_\K$ are not differentiable or if $I - J_T$ is singular, we will simply set $J_T$ as a zero matrix of the same shape. We will later show that regardless of the choice of $J_T$, the global convergence of our algorithm will not be affected and, with mild assumptions, it will also not affect the local convergence rate. Also, note that projections are functions that are almost everywhere differentiable.

With $J_T$ being well-defined for all $(z, w)$, we have the definition of the Newton step

\begin{definition}
The update rule for the Newton step is given by  
\begin{equation*}
    (z^{k+1}, w^{k+1}) = (z^k, w^k) + (\Delta z, \Delta w),
\end{equation*}
where $(\Delta z, \Delta w)$ is the solution of the linear system  
\begin{equation}
\label{equ: newtonstep}
    (I - J_T(z^k, w^k)) (\Delta z, \Delta w) = -(z^k, w^k) + T(z^k, w^k).
\end{equation}
Here, $J_T(z^k, w^k)$ is defined as in \eqref{equ: def_J_T} if $J_\D$ and $J_\K$ exist and $I - J_T$ is nonsingular; otherwise, we set $J_T = 0$.
\end{definition}

\subsection{Extended Linearly Independent Constraint Qualification}

In this section, we introduce the constraint qualification for our problem, which ensures the matrix in the Newton step is locally nonsingular. Such nonsingularity requires the fixed point set of the PIPG operator to be a single point. This, in turn, implies the uniqueness of both the primal and dual solutions: the strong convexity of Problem~\hyperref[equ: original problem]{1} guarantees a unique primal solution, while we need additional constraint qualifications to ensure the uniqueness of the dual solution.

The linearly independent constraint qualification (LICQ) is commonly used to ensure the uniqueness of the dual variable. However, LICQ is trivially violated for some important cases in Problem~\hyperref[equ: original problem]{1}. 

One example where LICQ fails is when \(\D_{ij}\) is a second-order cone and the optimal solution \(z_{ij} = 0\). Note that the second-order cone is defined as 
\[
\D_{ij} = \{(x, y) \in \mathbb{R}^{n_{z_{ij}-1}} \times \mathbb{R} \mid \|x\|^2 - y \leq 0, y \geq 0\},
\]
When \(x = 0\), the gradient of \( \|x\|^2 - y \) is a zero vector, causing LICQ to automatically fail. 

Another example is when \(\D_{ij}\) is an affine subspace. According to our definition of \(\D_{ij}\), an affine subspace is formed as 
\[
\{x \in \mathbb{R}^{n_{z_{ij}}} \mid 0 \preceq Ax - b \preceq 0\}.
\]
For any point \(z_{ij} \in \D_{ij}\), LICQ is automatically violated.

To include both types of set constraints in our discussion, we consider the following extended LICQ:

\begin{definition}[Extended-LICQ]
Problem~\hyperref[equ: original problem]{1} satisfies the extended Linearly Independent Constraint Qualification (extended-LICQ) if
$$\operatorname{range}([H_E^\top, H_{A_I}^\top]) \bigcap \operatorname{span}(N_\mathbb{D}(z^\star)) = \{0\},$$
and the matrix $[H_E^\top, H_{A_I}^\top]$ has full column rank. Here, $\operatorname{range}(\cdot)$ denotes the column space of a matrix. Moreover, $z^\star$ is the optimal solution of Problem~\hyperref[equ: original problem]{1}, and $H_{A_I}$ consists of rows from $H_I$ that activate (achieve equality) at $z^\star$.
\end{definition}

The extended-LICQ, or its equivalent forms, are commonly used constraint qualifications for problems that involve second-order cones. For instance, the extended LICQ is equivalent to the second-order constraint qualification defined in \cite{mordukhovich2014full}.

Now, we need to show that under extended-LICQ, all solutions to Problem~\hyperref[equ: PIPG problem]{2} satisfy the Karush-Kuhn-Tucker (KKT) condition:

\begin{theorem}
\label{thm: extended-licq}
Under the extended-LICQ assumption, $z^\star$ is a solution to Problem~\hyperref[equ: PIPG problem]{2} if and only if $z^\star$ is feasible for Problem~\hyperref[equ: PIPG problem]{2} and there exists a dual variable $w^\star \in \K$ such that
\begin{equation}
\label{equ: kkt}
0 \in Pz^\star + q + H^\top w^\star + N_\mathbb{D}(z^\star),
\end{equation}
and 
\begin{equation*}
H z^\star - g \in N_\K(w^\star).
\end{equation*}
Moreover, if the solution $z^\star$ to Problem~\hyperref[equ: PIPG problem]{2} exists, $(z^\star,w^\star)$ is unique.
\end{theorem}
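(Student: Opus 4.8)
The plan is to prove the theorem in two directions and then establish uniqueness. The "only if" direction is essentially the standard KKT necessity argument, but it is precisely where the extended-LICQ is needed: I would invoke a Fritz-John–type stationarity condition for the convex problem at $z^\star$ (i.e. the fact that $-( Pz^\star+q) \in N_{\mathcal{F}}(z^\star)$ where $\mathcal F$ is the full feasible set $\{z : Hz-g\in\mathbb K,\ z\in\mathbb D\}$), and then use a normal-cone decomposition $N_{\mathcal F}(z^\star) = \operatorname{range}([H_E^\top, H_{A_I}^\top]\cdot(\text{sign-restricted}) ) + N_{\mathbb D}(z^\star)$. The decomposition of the normal cone of an intersection into the sum of the individual normal cones is exactly the place where a constraint qualification is required; here the transversality-type condition $\operatorname{range}([H_E^\top,H_{A_I}^\top])\cap \operatorname{span}(N_{\mathbb D}(z^\star)) = \{0\}$, together with polyhedrality of the cone $\mathbb K$, plays that role. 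From the decomposition I extract $w^\star$ (with the correct sign pattern on the inequality block so that $w^\star \in \K$ and with $w^\star$ supported on the active rows, giving complementary slackness $Hz^\star - g \in N_\K(w^\star)$), which yields \eqref{equ: kkt}.

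For the "if" direction, I would argue that \eqref{equ: kkt} together with feasibility and $Hz^\star-g\in N_\K(w^\star)$ implies global optimality. This is the easy direction and follows from convexity: for any feasible $z$, convexity of the quadratic objective $f$ gives $f(z) \ge f(z^\star) + \langle Pz^\star+q, z-z^\star\rangle$; using $-(Pz^\star+q) = H^\top w^\star + v$ with $v\in N_{\mathbb D}(z^\star)$, the term $\langle v, z-z^\star\rangle \le 0$ since $z\in\mathbb D$, and $\langle H^\top w^\star, z-z^\star\rangle = \langle w^\star, Hz - Hz^\star\rangle = \langle w^\star, (Hz-g)-(Hz^\star-g)\rangle \ge 0$ because $w^\star\in\K=\mathbb K^\circ$, $Hz-g\in\mathbb K$, and $\langle w^\star, Hz^\star-g\rangle = 0$ by the complementarity $Hz^\star-g\in N_\K(w^\star)$. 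Hence $f(z)\ge f(z^\star)$.

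The uniqueness claim splits cleanly. Uniqueness of the primal solution $z^\star$ is immediate from strong convexity of the objective in Problem~\hyperref[equ: original problem]{1} (every $\rho_{ij}>0$, so $P\succ 0$ on the relevant subspace — in fact the homogeneity assumption makes $P$ positive definite on all of $\mathbb{R}^{n_z}$ in the constrained directions, which is all that is needed), so any two solutions must coincide. For uniqueness of the dual $w^\star$: suppose $w^\star_1, w^\star_2\in\K$ both satisfy \eqref{equ: kkt} at the common $z^\star$; subtracting gives $H^\top(w^\star_1-w^\star_2) \in \operatorname{span}(N_{\mathbb D}(z^\star))$ (the difference of two normal-cone elements lies in the span), and since both multipliers are supported on the active rows, $H^\top(w^\star_1-w^\star_2) = [H_E^\top,H_{A_I}^\top](\ldots)$ lies in $\operatorname{range}([H_E^\top,H_{A_I}^\top])$; by the extended-LICQ intersection condition this forces $[H_E^\top,H_{A_I}^\top](w^\star_1-w^\star_2)_{\text{active}} = 0$, and then full column rank of $[H_E^\top,H_{A_I}^\top]$ forces $w^\star_1 = w^\star_2$ on the active rows, while complementary slackness forces both to vanish on the inactive rows.

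The main obstacle I anticipate is the "only if" direction — specifically justifying the normal-cone sum rule $N_{\mathcal F}(z^\star) = N_{\{Hz-g\in\mathbb K\}}(z^\star) + N_{\mathbb D}(z^\star)$ and identifying $N_{\{Hz-g\in\mathbb K\}}(z^\star)$ with $\{H^\top w : w\in\K,\ Hz^\star-g\in N_\K(w)\}$. The former needs a careful appeal to a calculus rule for normal cones of intersections valid under the transversality/CQ hypothesis (one cannot simply use polyhedral calculus since $\mathbb D$ contains second-order cones), and the latter requires care because $H$ need not be surjective — one uses the block/staircase structure of $H$ (each $(A_i, B_i)$ couples only consecutive stages) or, more simply, the full-column-rank portion of the extended-LICQ restricted to active rows. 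I would route around subtleties by working with the relative interior condition $\operatorname{ri}$ and citing the standard convex-analytic constraint-qualified sum rule for normal cones (e.g. from \cite{borwein2006convex}), checking its hypotheses are implied by extended-LICQ.
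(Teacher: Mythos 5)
Your proposal is correct in substance, and the \emph{if} direction and the uniqueness argument coincide with the paper's (the paper likewise notes that extended-LICQ is not needed for the \emph{if} direction, and proves dual uniqueness by exactly your range-versus-span contradiction). Where you genuinely diverge is the \emph{only if} direction. The paper sidesteps normal-cone calculus entirely: it invokes a Fritz John--type multiplier rule (\cite[Theorem 10.47]{clarke2013functional}) to obtain $0 \in \eta(Pz^\star+q) + H^\top w^\star + N_\mathbb{D}(z^\star)$ with $\eta\in\{0,1\}$ and $(w^\star,\eta)\neq 0$, and then uses extended-LICQ only to exclude the degenerate case $\eta=0$ (if $\eta=0$ then $H^\top w^\star$ would lie in both $\operatorname{range}([H_E^\top,H_{A_I}^\top])$ and $\operatorname{span}(N_\mathbb{D}(z^\star))$ while being nonzero by full column rank --- a contradiction). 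You instead decompose $N_{\mathcal F}(z^\star)$ as a sum of the two individual normal cones. Both routes place the constraint qualification at the same logical juncture, but the Fritz John route is cheaper: it needs no intersection rule, only the exclusion of the abnormal multiplier. Your route buys a more transparent geometric picture, but carries the burden you yourself flag: the ``standard'' sum rule in \cite{borwein2006convex} is stated under a Slater/relative-interior hypothesis ($\operatorname{ri}C_1\cap\operatorname{ri}C_2\neq\emptyset$), which extended-LICQ does \emph{not} obviously supply. What you actually need is the pointwise-qualified intersection rule, valid under $N_{C_1}(z^\star)\cap(-N_{C_2}(z^\star))=\{0\}$ (e.g.\ Rockafellar--Wets, Theorem 6.42); that hypothesis does follow from extended-LICQ, since $N_{\{z:Hz-g\in\mathbb{K}\}}(z^\star)\subseteq\operatorname{range}([H_E^\top,H_{A_I}^\top])$ by polyhedrality (which also resolves, unconditionally, your worry about $H$ not being surjective when identifying that normal cone with $\{H^\top w: w\in\K,\ \langle w,Hz^\star-g\rangle=0\}$). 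With that citation corrected, your argument closes.
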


\begin{proof}
For the if part, by the definition of $N_\K$, $H z^\star - g \in N_\K(w^\star)$ is equivalent to
\begin{equation*}
\begin{aligned}
& w^{\star} \in \mathbb{K}^{\circ}, \\
& H z^{\star}-g \in \mathbb{K}, \\
& w^{\star \top}\left(H z^{\star}-g\right)  =0 .
\end{aligned}
\end{equation*}
Furthermore, \eqref{equ: kkt} ensures that, by \cite[Proposition 2.1.2]{borwein2006convex}, $z^\star$ is a minimizer of the function 
$$
\frac{1}{2} z^{\star\top} P z^\star + q^\top z^\star + w^{\star\top} (H z^\star - g) + \mathbb{I}_\D(z^\star),
$$
where $\mathbb{I}_\D(z^\star)$ is the indicator function of the set $\mathbb{D}$, taking the value $0$ if $z^\star \in \mathbb{D}$ and $+\infty$ otherwise.

Since $w^\star \in \K$, $w^{\star\top} (H z - g) $ is non-positive for all $z$ feasible to Problem~\hyperref[equ: PIPG problem]{2}, and thus
\begin{equation*}
\begin{aligned}
& \frac{1}{2} z^{\star \top} P z^{\star}+q^{\top} z^{\star} \\
& =\frac{1}{2} z^{\star \top} P z^{\star}+q^{\top} z^{\star}+w^{\star \top}\left(H z^{\star}-g\right)+\mathbb{I}_{\mathrm{D}}\left(z^{\star}\right) \\
& \leq \frac{1}{2} z^{\top} P z+q^{\top} z+w^{\star \top}(H z-g)+\mathbb{I}_{\mathbf{D}}(z) \\
& \leq \frac{1}{2} z^{\top} P z+q^{\top} z
\end{aligned}
\end{equation*}
for all feasible $z$. Thus, $z^\star$ is the solution to Problem~\hyperref[equ: PIPG problem]{2}.

For the only if part, \cite[Theorem 10.47]{clarke2013functional} ensures the existence of 
$w^\star \in \K$, $\eta = 0$ or $1$, and $(w^\star, \eta) \neq 0$ such that
\begin{equation}
\label{lo-equ1: kkt}
0 \in \eta (Pz^\star + q) + H^\top w^\star + N_\mathbb{D}(z^\star),
\end{equation}
and 
\begin{equation}
\label{lo-equ: kkt}
w^{\star\top} (H z^\star - g) = 0.
\end{equation}
The extended LICQ condition ensures that \( \eta = 1 \). Suppose instead that \( \eta = 0 \), then \( w^\star \neq 0 \). From \cref{lo-equ: kkt}, we have \( H^\top w^\star \in \operatorname{range}([H_E^\top, H_{A_I}^\top]) \), and the extended LICQ condition ensures that \( [H_E^\top, H_{A_I}^\top] \) has full rank, which implies \( H^\top w^\star \neq 0 \) for any \( w^\star \neq 0 \). However, \eqref{lo-equ1: kkt} also implies that \( H^\top w^\star \in \operatorname{span}(N_\D(z^\star)) \), which contradicts the extended LICQ condition.

The uniqueness of \( z^\star \) follows from the strong convexity of Problem~\hyperref[equ: PIPG problem]{2}. The uniqueness of \( w^\star \) can be shown by contradiction. Suppose there exist \( w^\star_1 \) and \( w^\star_2 \) satisfying~\eqref{equ: kkt}. Then,
\[
H^\top(w^\star_1 - w^\star_2) \in \operatorname{span}(N_\D(z^\star)), \quad
(w^\star_1 - w^\star_2)^\top (H z^\star - g) = 0.
\]
Hence, similar to the proof for the "only if" part, we have \[ 0 \neq H^\top(w^\star_1 - w^\star_2) \in \operatorname{range}([H_E^\top, H_{A_I}^\top]), \]
which contradicts the extended LICQ condition.

\end{proof}

\begin{remark}
In the proof of Theorem \ref{thm: extended-licq}, the extended LICQ is not required for the "if" direction. Meanwhile, for any fixed point \((z^\star, w^\star)\) of the PIPG operator, we have
\begin{itemize}
    \item \(-\left(P z^\star + q + H^\top w^\star\right) \in N_\mathbb{D}(z^\star)\),
    \item \(H(z^\star) - g \in N_\K(w^\star)\),
\end{itemize}
Therefore, Theorem \ref{thm: extended-licq} ensures that every fixed point \((z^\star, w^\star)\) of the PIPG operator is a solution to Problem~\hyperref[equ: PIPG problem]{2}, regardless of whether the extended LICQ holds.
 
\end{remark}

\subsection{Properties of Projections to Convex Sets}
In this section, we discuss the properties of projections onto convex sets. These properties are useful for subsequent proofs. To start, we discuss the eigenvalues of Jacobians of projection functions.

\begin{lemma}
\label{thm: proj_eigenvalue_betwen_0_1}
For a convex set \(\mathbb{D}\) and a point \(z\) outside \(\mathbb{D}\), let \(\pi(z)\) denote the projection of \(z\) onto \(\mathbb{D}\). If the Jacobian of this projection, denoted \(J(z)\), exists and is symmetric, then the eigenvalues of \(J(z)\) are real, non-negative, and do not exceed one.
\end{lemma}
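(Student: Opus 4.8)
The plan is to use the standard variational characterization of the Euclidean projection onto a convex set, namely the firm nonexpansiveness of $\pi$, and then relate this to the spectrum of the symmetric Jacobian $J(z)$. Concretely, for any convex set $\mathbb{D}$ and any two points $u, v$, the projection satisfies $\langle \pi(u) - \pi(v), u - v \rangle \ge \|\pi(u) - \pi(v)\|^2$, and in particular $\pi$ is nonexpansive: $\|\pi(u) - \pi(v)\| \le \|u - v\|$. The idea is to differentiate (or take directional limits of) these inequalities at the point $z$ to obtain, for every direction $h \in \mathbb{R}^n$, the two inequalities $h^\top J(z) h \ge \|J(z) h\|^2 \ge 0$ and $\|J(z) h\| \le \|h\|$.

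First I would fix an eigenvector $h$ of $J(z)$ with eigenvalue $\lambda$ (real, since $J(z)$ is symmetric), normalized so $\|h\| = 1$. Applying the first differentiated inequality gives $\lambda = h^\top J(z) h \ge \|J(z) h\|^2 = \lambda^2 \|h\|^2 = \lambda^2$, hence $\lambda \ge \lambda^2 \ge 0$, which forces $0 \le \lambda \le 1$. Alternatively one can get $\lambda \le 1$ directly from nonexpansiveness ($|\lambda| = \|J(z)h\| \le \|h\| = 1$) and $\lambda \ge 0$ from monotonicity of the projection ($\lambda = h^\top J(z) h \ge 0$ because the projection is a monotone operator, being the gradient of the convex function $z \mapsto \tfrac12\|z\|^2 - \tfrac12 d_{\mathbb{D}}(z)^2$, or directly because $\langle \pi(u)-\pi(v), u-v\rangle \ge 0$). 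Either route yields the claim once the infinitesimal versions of the inequalities are in hand.

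To make the differentiation step rigorous I would write, for $t > 0$ small, $\langle \pi(z + t h) - \pi(z), t h \rangle \ge \|\pi(z + t h) - \pi(z)\|^2$, substitute the first-order expansion $\pi(z + t h) = \pi(z) + t J(z) h + o(t)$ which is valid precisely because $J(z)$ is assumed to exist, divide by $t^2$, and let $t \to 0^+$ to obtain $h^\top J(z) h \ge \|J(z)h\|^2$. The nonexpansiveness inequality is handled the same way, or is simply inherited in the limit. I would note that the hypothesis $z \notin \mathbb{D}$ is not really needed for the argument but is stated because it is the case of interest (for $z$ in the interior of $\mathbb{D}$ the Jacobian is the identity on the relevant subspace, and the claim is trivial there).

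The main obstacle, such as it is, is purely one of care with the limiting argument: one must make sure the differentiability hypothesis is used correctly to justify the first-order expansion $\pi(z+th) = \pi(z) + tJ(z)h + o(t)$ uniformly enough in $t$ to pass to the limit in the quadratic inequality, and one must invoke the standard facts (firm nonexpansiveness / monotonicity of the metric projection onto a convex set) with a precise citation — the excerpt already points to \cite{borwein2006convex} for convex-analytic facts, so I would cite the firm nonexpansiveness of the projection from there. The symmetry hypothesis on $J(z)$ is what lets us conclude the eigenvalues are real in the first place and lets us diagonalize; without it one would only control the field of values. No serious difficulty is expected beyond this bookkeeping.
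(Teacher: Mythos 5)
Your proof is correct and follows essentially the same strategy as the paper's: expand $\pi(z+th)=\pi(z)+tJ(z)h+o(t)$ along an eigenvector, plug into a standard projection inequality, divide by $t^2$, and pass to the limit. The only cosmetic difference is that you invoke firm nonexpansiveness, which delivers $0\le\lambda\le1$ in a single inequality, whereas the paper derives $\lambda\ge0$ from the minimality-defining inequality $\|(z+\varepsilon v)-\pi(z+\varepsilon v)\|^2\le\|(z+\varepsilon v)-\pi(z)\|^2$ together with the obtuse-angle property, and then gets $\lambda\le1$ separately from plain nonexpansiveness (Lipschitz constant one).
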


\begin{proof}
Since $J(z)$ is symmetric, all eigenvalues of $J(z)$ are real numbers. 

Suppose that $\lambda$ is an eigenvalue of $J(z)$, and let $v$ be a corresponding eigenvector. We have
\begin{equation*}
\lim _{\varepsilon \rightarrow 0} \frac{\pi(z+\varepsilon v)-\pi(z)}{\varepsilon}=J(z)  v=\lambda v
\end{equation*}
Thus $\pi(z+\varepsilon v)=\pi(z)+\lambda \varepsilon v+o(\varepsilon).$

Since $\pi(z) \in \D,$ the definition of projection ensures that
\begin{equation*}
\|(z+\varepsilon v) - \pi( z+\varepsilon v)\|^2 \leq\|(z+\varepsilon v)-\pi(z)\|^2,
\end{equation*}

Therefore, $$\|\pi(z+\varepsilon v)-\pi(z)+\pi(z)-(z+\varepsilon v)\|^2 \leq\|z+\varepsilon v-\pi (z) \|^2,$$ and hence
$$\|\pi(z+\varepsilon v)-\pi(z)\|^2+2\langle \pi(z+\varepsilon v)-\pi(z), \pi(z)-z-\varepsilon v\rangle \leq 0$$
Thus
\begin{equation*}
\begin{aligned}
& \|\lambda \varepsilon v+o(\varepsilon)\|^2+2\langle\lambda \varepsilon v+o(\varepsilon),-\varepsilon v\rangle \\
& \leq 2\langle\pi(z+\varepsilon v)-\pi(z), z-\pi(z)\rangle
\end{aligned}
\end{equation*}
By the convexity of set $\mathbb{D}$, $\langle \pi(z+\varepsilon v)-\pi(z), z-\pi(z)\rangle \leq 0$(\cite[Proposition 7.4]{clarke2013functional}). 
We have $$\|\lambda \varepsilon v+o(\varepsilon)\|^2+2\langle\lambda \varepsilon v+o(\varepsilon),-\varepsilon v\rangle \leq 0$$ 
Dividing $\varepsilon^2$ from both sides,
\begin{equation*}
\left.\left.\|\lambda v+o(1)\|^2 \leq 2\langle\lambda v+o(1)\right., v\right\rangle 
\end{equation*}
Hence, $\lambda \geq 0$.

On the other hand, since projections are Lipschitz functions with Lipschitz constants equal to one \cite[Exercise 7.5]{clarke2013functional}, all eigenvalues of \(J(z)\) need to be less than or equal to one. Otherwise, we would have
\begin{equation*}
\lim_{\varepsilon \rightarrow 0} \frac{\pi(z + \varepsilon v) - \pi(z)}{\varepsilon} = J(z)  v = \lambda v, \quad \lambda > 1
\end{equation*}
for some \(\lambda\) and \(v\), Then we have 
\(
\|\pi(z + \varepsilon v) - \pi(z)\| > \|\varepsilon v\|
\) for \(\varepsilon\) sufficiently small, violating the Lipschitz property of projections.\end{proof}

Lemma \ref{thm: proj_eigenvalue_betwen_0_1} requires \(J(z)\) to be symmetric. This assumption holds for many types of convex sets. For example, projections onto points, boxes, balls, second-order cones, and half-spaces have symmetric Jacobians. These sets cover many applications in optimal control. 

Next, we explore the smoothness of projections onto \(\D_{ij}\). Based on Corollary $4.13$ in \cite{lewis2002active} and Theorem $3.3$ in \cite{hare2004identifying}, we have the following theorem. 
\begin{theorem}
\label{projection theorem}
Consider a point $z_0$ in a convex set 
$$
\mathbb{E} = \left \{z\in \MR^{n_\mathbb{E}}: \Phi_i(z) \leq 0,  \text{for all }i \in \intset{1}{m_\mathbb{E}}  \right\},
$$
where  $\Phi_i: \mathbb{R}^{n_\mathbb{E}} \rightarrow \mathbb{R}$ are smooth convex functions. Denote $I_\mathbb{E}(z_0) = \left\{k: \Phi_k(z_0) = 0 \right\}.$ Assume that $\left\{\nabla \Phi_k\left(z_0\right): k \in I_\mathbb{E}(z_0) \right\}$ is a linearly independent set or an empty set. For any normal vector $\bar n$ in $ri N_\mathbb{E}(z_0)$, there exists a sufficiently small open neighborhood $K$ of $z_0 + \bar n$ such that the projection mapping $\pi_\mathbb{E}$ from $K$ to set $\mathbb{E}$ is a smooth function. 

Furthermore, we set
\begin{equation*}
\mathbb{M} = 
\begin{cases}
\{z : \Phi_k(z) = 0, \ \forall k \in I_\mathbb{E}(z_0)\}, & \text{if } I_\mathbb{E}(z_0) \neq \emptyset, \\
\mathbb{R}^{n_\mathbb{E}}, & \text{otherwise}.
\end{cases}
\end{equation*}
Let \(\pi_\mathbb{M}\) be the projection function onto \(\mathbb{M}\). It follows that
\[
\pi_\mathbb{M} = \pi_\mathbb{E},
\]
for all points in \(K\).

\end{theorem}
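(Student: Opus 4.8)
The plan is to realize $\pi_\mathbb{E}$, near $z_0+\bar{n}$, as the solution map of the Karush--Kuhn--Tucker (KKT) system of the projection problem $\min_z\tfrac12\|z-p\|^2$ subject to $\Phi_i(z)\le 0$, and then apply the implicit function theorem. If $I_\mathbb{E}(z_0)=\emptyset$ then $z_0\in\operatorname{int}\mathbb{E}$, $N_\mathbb{E}(z_0)=\{0\}$, $\bar{n}=0$, $\mathbb{M}=\mathbb{R}^{n_\mathbb{E}}$, and both $\pi_\mathbb{E}$ and $\pi_\mathbb{M}$ equal the identity on a small neighborhood of $z_0$; so assume $I:=I_\mathbb{E}(z_0)\neq\emptyset$. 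Since $\{\nabla\Phi_k(z_0):k\in I\}$ is linearly independent, LICQ holds at $z_0$, and a standard computation gives $N_\mathbb{E}(z_0)=\big\{\sum_{k\in I}\lambda_k\nabla\Phi_k(z_0):\lambda_k\ge 0\big\}$, whose relative interior consists of the combinations with every $\lambda_k>0$. Hence $\bar{n}=\sum_{k\in I}\bar\lambda_k\nabla\Phi_k(z_0)$ with all $\bar\lambda_k>0$, while $\Phi_i(z_0)<0$ for $i\notin I$; that is, strict complementarity holds. Writing $p_0:=z_0+\bar{n}$, the convex-projection characterization $p_0-z_0\in N_\mathbb{E}(z_0)$ shows $z_0=\pi_\mathbb{E}(p_0)$ with associated multipliers $\bar\lambda$.

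Next I would apply the implicit function theorem to
\[
F(z,\mu;p)=\Bigl(z-p+\textstyle\sum_{k\in I}\mu_k\nabla\Phi_k(z),\ \bigl(\Phi_k(z)\bigr)_{k\in I}\Bigr),
\]
which is smooth and satisfies $F(z_0,\bar\lambda;p_0)=0$. Its Jacobian in $(z,\mu)$ at that point is the saddle-point matrix $\left(\begin{smallmatrix} G & C\\ C^{\top} & 0\end{smallmatrix}\right)$ with $G=I+\sum_{k\in I}\bar\lambda_k\nabla^2\Phi_k(z_0)$ and $C=[\nabla\Phi_k(z_0)]_{k\in I}$. Here $G$ is positive definite because each $\nabla^2\Phi_k(z_0)$ is positive semidefinite by convexity of $\Phi_k$, and $C$ has full column rank by LICQ; eliminating the $z$-block and using that $C^{\top}G^{-1}C$ is positive definite shows this matrix is nonsingular. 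The implicit function theorem then yields a smooth map $p\mapsto(z(p),\mu(p))$ on a neighborhood of $p_0$ with $z(p_0)=z_0$ and $\mu(p_0)=\bar\lambda$.

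To identify $z(p)$ with $\pi_\mathbb{E}(p)$, I would shrink to an open set $K\ni p_0$ on which, by continuity, $\mu_k(p)>0$ for all $k\in I$ and $\Phi_i(z(p))<0$ for all $i\notin I$. Extending $\mu(p)$ by zero on the indices $i\notin I$, the pair $(z(p),\mu(p))$ satisfies stationarity, primal feasibility, and complementary slackness for the projection problem onto $\mathbb{E}$; since that problem is convex with a strongly convex objective, KKT is sufficient, so $z(p)=\pi_\mathbb{E}(p)$ on $K$, which is therefore smooth. For the remaining claim, set $\mathbb{E}'=\{z:\Phi_k(z)\le 0,\ k\in I\}$, a convex set with $\mathbb{E}\subseteq\mathbb{E}'$ and $\mathbb{M}\subseteq\mathbb{E}'$. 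The same KKT pair certifies $z(p)=\pi_{\mathbb{E}'}(p)$ as well, and since $z(p)\in\mathbb{M}\subseteq\mathbb{E}'$,
\[
\operatorname{dist}(p,\mathbb{E}')\ \le\ \operatorname{dist}(p,\mathbb{M})\ \le\ \|p-z(p)\|\ =\ \operatorname{dist}(p,\mathbb{E}'),
\]
so $z(p)$ is a nearest point of $\mathbb{M}$; any other nearest point of $\mathbb{M}$ also lies in $\mathbb{E}'$ and attains $\operatorname{dist}(p,\mathbb{E}')$, hence coincides with $\pi_{\mathbb{E}'}(p)=z(p)$ by uniqueness of convex projections. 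Thus $\pi_\mathbb{M}=\pi_\mathbb{E}=z(\cdot)$ on $K$.

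The step I expect to be delicate is not the linear algebra but the persistence of the active set $I$ near $p_0$: this is precisely where $\bar{n}\in\operatorname{ri}N_\mathbb{E}(z_0)$ — equivalently, strict complementarity — is essential, since on the relative boundary of the normal cone some $\bar\lambda_k$ vanishes, the active set may change, and smoothness fails. A secondary subtlety is that $\mathbb{M}$ need not be convex, so "projection onto $\mathbb{M}$" must be shown single-valued on $K$; the sandwich through the convex relaxation $\mathbb{E}'$ resolves this. The nonsingularity of the saddle-point Jacobian and the implicit-function-theorem bookkeeping are routine. Alternatively, one can invoke the identifiable-surface results of \cite{lewis2002active} and \cite{hare2004identifying} directly; the argument above is essentially their specialization to the Euclidean projection map.
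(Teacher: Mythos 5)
Your proof is correct. The paper itself does not give a proof of this theorem; it states that the result follows from Corollary~4.13 of Lewis~(2002) and Theorem~3.3 of Hare~(2004), i.e.\ it imports the statement from the partial-smoothness/identifiable-manifold literature. You instead give a direct, self-contained argument: after recording that strict complementarity (all multipliers strictly positive, inactive constraints strictly slack) is exactly what $\bar n\in\operatorname{ri}N_\mathbb{E}(z_0)$ encodes under LICQ, you apply the implicit function theorem to the KKT system of the projection problem restricted to the active constraints, check nonsingularity of the resulting saddle-point Jacobian via $G\succ 0$ and $C$ full column rank, and then use sufficiency of KKT for the convex projection to identify the IFT solution branch $z(\cdot)$ with $\pi_\mathbb{E}$. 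The handling of the $\pi_\mathbb{M}=\pi_\mathbb{E}$ claim through the convex relaxation $\mathbb{E}'$ is a nice touch, since it cleanly settles the single-valuedness of $\pi_\mathbb{M}$ on $K$ even though $\mathbb{M}$ need not be convex. What the paper's citation buys is brevity and access to a general framework (identifiable surfaces, manifold identification) that is also used elsewhere in that literature; what your unpacking buys is transparency about exactly which hypotheses (LICQ, strict complementarity, convexity of $\Phi_k$) are used and where, at the cost of re-deriving a known result. Both are valid; yours is essentially the specialization of the cited results to the Euclidean projection map, as you yourself note.
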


Based on Theorem \ref{projection theorem}, we have:

\begin{theorem}
\label{thm: nullspace of jacobian}
Consider the set \(\mathbb{E}\), point \(z_0\), vector \(\bar{n}\), and neighborhood \(K\) in Theorem \ref{projection theorem}. Under assumptions in Theorem \ref{projection theorem}, the projection 
$\pi_\mathbb{E}(y)$ is a smooth function for $y \in K$. The null space of the Jacobian of $\pi_\mathbb{E}(z_0 +\bar n)$ is a subset of \(\operatorname{span} N_{\mathbb{E}}(z_0)\),  the set of all linear combinations of the vectors in the set $N_{\mathbb{E}}(z_0)$.
\end{theorem}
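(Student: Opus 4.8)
The plan is to invoke Theorem \ref{projection theorem} to replace the projection onto the convex set $\mathbb{E}$ near $z_0+\bar n$ by the projection onto the smooth equality-constrained manifold $\mathbb{M}$, and then to differentiate the Lagrange-multiplier (normality) characterization of that latter projection. First I would dispose of the degenerate case $I_\mathbb{E}(z_0)=\emptyset$: there $\mathbb{M}=\mathbb{R}^{n_\mathbb{E}}$, so by Theorem \ref{projection theorem} $\pi_\mathbb{E}$ coincides with the identity on a neighborhood of $z_0+\bar n$, its Jacobian is $I$, and its null space is $\{0\}$, which agrees with $\operatorname{span}N_\mathbb{E}(z_0)=\{0\}$.

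Now assume $I:=I_\mathbb{E}(z_0)\neq\emptyset$, and let $G(p)$ denote the matrix whose columns are $\nabla\Phi_k(p)$ for $k\in I$. By hypothesis $G(z_0)$ has full column rank, so $G(p)^\top G(p)$ is invertible for $p$ in a neighborhood of $z_0$, and there $\mathbb{M}$ is a smooth submanifold with tangent space $\ker G(p)^\top$ at $p$. Since $\bar n\in\operatorname{ri}N_\mathbb{E}(z_0)\subseteq N_\mathbb{E}(z_0)$, the standard characterization of the normal cone of a convex set gives $\pi_\mathbb{E}(z_0+\bar n)=z_0$; Theorem \ref{projection theorem} provides a neighborhood $K$ of $y_0:=z_0+\bar n$ on which $\pi_\mathbb{E}=\pi_\mathbb{M}$ is smooth, and after shrinking $K$ we may assume $\pi_\mathbb{M}(K)$ lies in the regular region of $\mathbb{M}$. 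The first-order optimality condition for the projection then says $y-\pi_\mathbb{M}(y)\perp T_{\pi_\mathbb{M}(y)}\mathbb{M}=\ker G(\pi_\mathbb{M}(y))^\top$, i.e. $y-\pi_\mathbb{M}(y)\in\operatorname{range}G(\pi_\mathbb{M}(y))$ for all $y\in K$. Defining the smooth function $\mu(y):=\bigl(G(\pi_\mathbb{M}(y))^\top G(\pi_\mathbb{M}(y))\bigr)^{-1}G(\pi_\mathbb{M}(y))^\top\bigl(y-\pi_\mathbb{M}(y)\bigr)$ on $K$, this yields the identity $y-\pi_\mathbb{M}(y)=G(\pi_\mathbb{M}(y))\,\mu(y)$.

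The key step is to differentiate this identity at $y_0$, using $\pi_\mathbb{M}(y_0)=z_0$. Writing $D$ for the Jacobian of $\pi_\mathbb{E}=\pi_\mathbb{M}$ at $y_0$ and $S:=\sum_{k\in I}\mu_k(y_0)\nabla^2\Phi_k(z_0)$, the chain rule gives $I-D=SD+G(z_0)\,J_\mu(y_0)$, where $J_\mu(y_0)$ is the Jacobian of $\mu$ at $y_0$. Hence the matrix $M:=(I-D)-SD$ satisfies $\operatorname{range}M\subseteq\operatorname{range}G(z_0)$. Finally, for any $v\in\operatorname{null}D$ we get $Mv=(I-D)v-SDv=v$, so $v\in\operatorname{range}G(z_0)=\operatorname{span}\{\nabla\Phi_k(z_0):k\in I\}=\operatorname{span}N_\mathbb{E}(z_0)$, the last equality holding because under the linear-independence hypothesis $N_\mathbb{E}(z_0)$ is exactly the cone generated by those gradients. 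This establishes the claim; note that in this final computation the $SD$ term drops out on $\operatorname{null}D$, so the convexity of the $\Phi_k$ is not needed here — it enters only through the structure of $\mathbb{E}$ and of $N_\mathbb{E}(z_0)$.

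I expect the main obstacle to be the rigorous justification of the smooth multiplier representation $y-\pi_\mathbb{M}(y)=G(\pi_\mathbb{M}(y))\mu(y)$ with $\mu$ smooth, on a single neighborhood $K$: this requires combining the smoothness of $\pi_\mathbb{M}$ from Theorem \ref{projection theorem}, a continuity argument keeping $\pi_\mathbb{M}(K)$ inside the region where $\{\nabla\Phi_k\}_{k\in I}$ stays linearly independent (so $G^\top G$ is invertible and $\mathbb{M}$ is a genuine $C^\infty$ submanifold there), and the first-order optimality condition for nearest-point projection onto that submanifold. Once this representation and the identification $\pi_\mathbb{M}(y_0)=z_0$ are in hand, the differentiation and the null-space conclusion are routine.
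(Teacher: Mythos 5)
Your proposal is correct and follows essentially the same route as the paper: reduce to the projection onto the active manifold $\mathbb{M}$ via Theorem \ref{projection theorem}, express the projection residual through a smooth Lagrange multiplier, differentiate that identity at $y_0$, and observe that on the null space of the Jacobian the curvature term drops out, leaving membership in $\operatorname{range} G(z_0)=\operatorname{span} N_\mathbb{E}(z_0)$. The only cosmetic difference is that you obtain smoothness of the multiplier from the normal-equations formula $(G^\top G)^{-1}G^\top$, whereas the paper selects an invertible square row-submatrix and applies Cramer's rule.
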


\begin{proof}
Set \(y_0 = z_0 + \bar{n}\). If \(I_\mathbb{E}(z_0) := \{k \mid \Phi_k(z_0) = 0\}\) is empty, then \(z_0 \in \operatorname{int} \mathbb{E}\) and thus \(\bar{n} = 0\). Consequently, the Jacobian of the projection is the identity matrix, and the conclusion holds automatically.

When \(I_\mathbb{E}(z_0)\) is nonempty, without loss of generality, we assume that \(I_\mathbb{E}(z_0) = \{1, \ldots, m\}\), i.e., only the first \(m\) inequalities defining \(\mathbb{E}\) are satisfied as equalities at \(z_0\).

For any \(y \in K\), Theorem \ref{projection theorem} ensures that \(\pi_\mathbb{E}(y) = \pi_\mathbb{M}(y)\). Since \(\pi_\mathbb{M}(y) = \operatorname{argmin}\{\frac 1 2 \|\bar{y} - y\|^2 \mid \bar{y} \in \mathbb{M}\}\), the linear independence condition in Theorem \ref{projection theorem} ensures the following KKT condition: there exists a dual variable \(\lambda(y) = (\lambda_1, \cdots, \lambda_m) \in \mathbb{R}^m\) such that
\begin{equation}
\label{equ: proj_kkt}
0 = (\pi_\mathbb{E}(y) - y) + \sum_{i=1}^m \lambda_i(y) \nabla \Phi_i(\pi_\mathbb{E}(y)).
\end{equation}

Denote the matrix $F(y)$ as 
\[
 \left[\nabla \Phi_1(\pi_\mathbb{E}(y)), \quad \nabla \Phi_2(\pi_\mathbb{E}(y)), \quad \ldots, \quad \nabla \Phi_m(\pi_\mathbb{E}(y))\right].
\]
According to the assumptions in Theorem \ref{projection theorem}, the columns of \(F(y_0)\) are linearly independent. Thus, \(F(y_0)\) has full column rank, and the number of rows in \(F(y_0)\) is not less than the number of columns. Therefore, we can choose \(m\) rows of \(F(y)\), denoted as \(k_1, k_2, \dots, k_m\), to form a matrix \(\tilde{F}(y)\) such that \(\tilde{F}(y_0)\) is invertible. 

As \(\pi_\mathbb{E}\) is a smooth function locally, there exists an open neighborhood \(\tilde{K} \subseteq K\) of \(y_0\) such that for any \(y \in \tilde{K}\), \(\tilde{F}(y)\) is invertible. Therefore, \eqref{equ: proj_kkt} implies that
\[
\lambda(y) = - \tilde{F}^{-1}(y)  [\pi_\mathbb{E}(y)_{k_1} - y_{k_1}, \cdots, \pi_\mathbb{E}(y)_{k_m} - y_{k_m}]^\top.
\]

Using Cramer's rule, \(\tilde{F}^{-1}(y)\), and thus \(\lambda(y)\), are smooth functions for \(y \in \tilde{K}\). We then compute the Jacobian of \eqref{equ: proj_kkt} evaluated at the point \(y_0\), leading to 
\begin{equation}
\label{loc_equ:gradient of jacob}
J_{\mathbb{E}} - I + \sum_{i=1}^m \lambda_i \nabla^2 \Phi_i  J_{\mathbb{E}} + \sum_{i=1}^m \nabla \Phi_i  \nabla \lambda_i = 0,
\end{equation}
where \(J_\mathbb{E}(y)\) is the Jacobian of $\pi_\mathbb{E}(y).$ In \eqref{loc_equ:gradient of jacob}, \(J_\mathbb{E}, \ \lambda_i\) and \(\nabla \lambda_i\) are evaluated at \(y_0\), and \(\nabla \Phi_i\) and \(\nabla^2 \Phi_i\) are evaluated at \(z_0\).

Let \(v\) be a vector in the null space of \(J_{\mathbb{E}}(y_0)\), i.e. \(J_{\mathbb{E}}(y_0) v = 0\). Multiplying both sides of \eqref{loc_equ:gradient of jacob} by \(v\), we obtain 
\[
\sum_{i=1}^m \nabla \Phi_i(y_0)  \nabla \lambda_i^\top(y_0) v = v.
\]

Hence,
\[
v \in \operatorname{span}(\{\nabla \Phi_k(z_0) \mid k \in I_\mathbb{E}(z_0)\}) = \operatorname{span}(N_{\mathbb{E}}(z_0)).
\]
The last equality follows from the fact that \(N_{\mathbb{E}}(z_0) = \{\sum_k \eta_k \nabla \Phi_k(z_0) \mid \eta_k \geq 0,\  k \in I_\mathbb{E}(z_0)\}\)\cite[Thm. 10.39, Cor. 10.44, Thm. 10.45]{clarke2013functional}.\end{proof}

Although Theorem \ref{projection theorem} covers most sets of interest, it excludes projections onto second-order cones and affine subspaces, as both violate its assumptions. To incorporate these cases into our algorithm, we must ensure the smoothness guaranteed by Theorem \ref{projection theorem} also holds. Projections onto affine subspaces are linear and explicitly computable via projection matrices, ensuring smoothness and the validity of Theorem \ref{thm: nullspace of jacobian}. For second-order cones, smoothness is only an issue when the projection lands at the origin, leading to the following lemma.


\begin{lemma}
\label{lemma: second_order_cone null space}
Consider a second-order cone 
\[
\mathbb{E} = \{(x, y) \in \mathbb{R}^{n_x} \times \mathbb{R} \mid \|x\|^2 - ty \leq 0, y \geq 0\},
\]
where \(t\in \MR_+\) is fixed. Then
\[
N_\mathbb{E}(0) = \{(x, y) \in \mathbb{R}^{n_x} \times \mathbb{R} \mid t\|x\|^2 \leq -y, y \leq 0\},
\]
and \(\operatorname{span} N_\mathbb{E}(0) = \mathbb{R}^{n_x + 1}\). For any \(\bar{n} \in \operatorname{ri} N_\mathbb{E}(0)\), there exists an open neighborhood \(K\) around \(\bar{n}\) such that \(\pi_\mathbb{E}(v) = 0\) for all \(v \in K\), ensuring the smoothness of \(\pi_\mathbb{E}\) in \(K\).
\end{lemma}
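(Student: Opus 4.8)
The plan is to use that $\mathbb{E}$ is a closed convex cone — a second-order (Lorentz) cone with its last coordinate rescaled by $t$ — so that the normal cone at the apex is exactly the polar cone, $N_\mathbb{E}(0)=\mathbb{E}^\circ:=\{v:\langle v,z\rangle\le 0\ \text{for all }z\in\mathbb{E}\}$. Once this identification is made, the three assertions reduce to: (i) an explicit description of $\mathbb{E}^\circ$; (ii) the remark that $\mathbb{E}^\circ$ is full-dimensional; and (iii) the Moreau decomposition combined with openness of $\operatorname{int}\mathbb{E}^\circ$. The reason this needs a separate argument at all — rather than being a corollary of Theorem \ref{projection theorem} — is that the apex is a non-smooth point of $\mathbb{E}$ whose smooth defining inequalities have linearly dependent (indeed, partly vanishing) gradients there, so the convex constraint-qualification machinery behind Theorem \ref{projection theorem} does not apply and $N_\mathbb{E}(0)$ must be computed from first principles.

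For (i), I would compute $\mathbb{E}^\circ$ directly. For a candidate $(a,b)$, the supremum of $\langle a,x\rangle+by$ over $(x,y)\in\mathbb{E}$ is $+\infty$ unless $b\le 0$ (take $x=0$, $y\to\infty$); when $b\le 0$, one takes the smallest admissible $y$ (i.e. works on the boundary of $\mathbb{E}$) and then optimizes the direction of $x$ by Cauchy--Schwarz, concluding that the supremum equals $0$ precisely when $t\|a\|\le -b$ and is $+\infty$ otherwise. This yields $N_\mathbb{E}(0)$ equal to the cone displayed in the statement, with the sign condition on the last coordinate automatic. Alternatively one may invoke the self-duality of the Lorentz cone after absorbing $t$ into the last coordinate. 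For (ii), this cone contains a full ball around $(0,-M)$ for $M$ large, hence has nonempty interior; therefore $\operatorname{span}N_\mathbb{E}(0)=\mathbb{R}^{n_x+1}$ and, being full-dimensional, $\operatorname{ri}N_\mathbb{E}(0)=\operatorname{int}N_\mathbb{E}(0)$.

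For (iii), fix $\bar n\in\operatorname{ri}N_\mathbb{E}(0)=\operatorname{int}\mathbb{E}^\circ$. By openness there is an open ball $K$ around $\bar n$ with $K\subseteq\mathbb{E}^\circ$. For any $v\in K$, the Moreau decomposition for the closed convex cone $\mathbb{E}$ gives $v=\pi_\mathbb{E}(v)+\pi_{\mathbb{E}^\circ}(v)$ with $\langle\pi_\mathbb{E}(v),\pi_{\mathbb{E}^\circ}(v)\rangle=0$; since $v\in\mathbb{E}^\circ$ we have $\pi_{\mathbb{E}^\circ}(v)=v$, whence $\pi_\mathbb{E}(v)=0$. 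Thus $\pi_\mathbb{E}\equiv 0$ on $K$, so it is trivially $C^\infty$ there — the smoothness conclusion of Theorem \ref{projection theorem} — and its (zero) Jacobian on $K$ has null space $\mathbb{R}^{n_x+1}=\operatorname{span}N_\mathbb{E}(0)$, so the conclusion of Theorem \ref{thm: nullspace of jacobian} extends to this case as well.

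None of the steps is technically heavy; the main point requiring care is step (i), since $N_\mathbb{E}(0)$ cannot be read off from the constraint gradients and must be derived from the definition of the normal cone, and the secondary point is to record explicitly that $\operatorname{ri}$ equals $\operatorname{int}$ here, as it is precisely the openness of $\operatorname{int}\mathbb{E}^\circ$ that furnishes the neighborhood $K$ on which $\pi_\mathbb{E}$ is constant.
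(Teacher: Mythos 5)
Your proposal is correct and follows essentially the same route as the paper's (very terse) proof: identify $N_\mathbb{E}(0)$ with the polar cone $\mathbb{E}^\circ$, observe it is full-dimensional so $\operatorname{ri}$ coincides with $\operatorname{int}$, and use the characterization of the projection onto a convex set via its normal cones (your Moreau decomposition being the cone-specific instance) to conclude $\pi_\mathbb{E}\equiv 0$ on a neighborhood of $\bar n$. You supply the explicit polar-cone computation and the Moreau argument where the paper merely asserts these facts, but the underlying structure of the argument is the same.
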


\begin{proof}
The form of the normal cones can be verified using the definition of normal cones to convex sets. Clearly, \(\operatorname{span}(N_\mathbb{E}(0))\) encompasses the entire space and \(\operatorname{ri} N_\mathbb{E}(0) = \operatorname{int}(N_\mathbb{E}(0))\). Moreover, if \(\bar{n} \in \operatorname{int}(N_\mathbb{E}(0))\), there exists a neighborhood \(K\) such that for any \(v\) in \(K\), \(v \in N_\mathbb{E}(0)\) and \(\pi_\mathbb{E}(v) = 0\), according to the properties of the normal cone.\end{proof}

\subsection{Nonsingularity of the Newton Step Matrix}
In this section, we will show that the Newton step is well defined in a local neighborhood of \((z^\star, w^\star)\), i.e., $T$ is differentiable and \(I - J_T(z^k, w^k)\) is invertible.

We start with the following lemma, which is also a major component of the algorithm. This lemma ensures that the linear system \eqref{equ: newtonstep}, which involves a nonsymmetric matrix, can be transformed into a symmetric linear system. We will later show that this symmetric linear system exhibits a desirable sparsity pattern.

\begin{lemma}
\label{lemma: simplification}
Assume that \( J_\mathbb{D} \) in \eqref{equ: def_J_T} is symmetric and has an eigenvalue decomposition \( J_{\mathbb{D}} = Q_\mathbb{D} \Lambda_\mathbb{D} Q_\mathbb{D}^\top \). Then, the matrix \( I - \Lambda_{\mathbb{D}} + \alpha \Lambda_\mathbb{D} P \) is invertible.
Denote
\begin{equation*}
\begin{aligned}
V_{\mathbb{D}}&:=Q_{\mathbb{D}}\left(I-\Lambda_{\mathbb{D}}+\alpha \Lambda_{\mathbb{D}} P\right)^{-1} Q_{\mathbb{D}}^{\top} \\
W_{\mathbb{D}}&:=H Q_{\mathbb{D}}\left(I-\Lambda_{\mathbb{D}}+\alpha \Lambda_{\mathbb{D}} P\right)^{-1} \Lambda_{\mathbb{D}} Q_{\mathbb{D}}^{\top} H^{\top} \\
\tilde{W}_{\mathbb{D}}&:=\alpha \beta J_{\mathbb{K}^{\circ}} W_{\mathbb{D}} J_{\mathbb{K}^{\circ}}+I-J_{\mathbb{K}^{\circ}} .
\end{aligned}
\end{equation*}
Solving \eqref{equ: newtonstep} is thus equivalent to solving
\begin{equation*}
\begin{aligned}
\tilde{W}_\mathbb{D} \Delta w &= (I - \alpha \beta^2 J_{\mathbb{K}^{\circ}} W_\mathbb{D} (I - J_{\mathbb{K}^{\circ}})) \bar{R}_w^k, \\
\Delta z &= V_\mathbb{D}(\tilde{R}_z^k - \alpha J_\mathbb{D} H^\top \Delta w).
\end{aligned}
\end{equation*}

Here,
\[
\left(\begin{array}{c}
\tilde{R}_z^k \\
\tilde{R}_w^k
\end{array}\right) := \left[\begin{array}{cc}
I & 0 \\
-2J_\K \beta H & I
\end{array}\right]  (T(z^k, w^k) - (z^k, w^k)),
\]
and 
\[
\bar{R}_w^k = J_{\K}\beta H V_\mathbb{D} \tilde{R}_z^k + \tilde{R}_w^k.
\]
The coefficients $\alpha$ and $\beta$ are from the PIPG operator.
\end{lemma}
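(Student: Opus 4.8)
The plan is to transform the nonsymmetric Newton system \eqref{equ: newtonstep} into an (essentially block triangular) system by a left-multiplication, invert its $(1,1)$ block explicitly using the spectral structure, and then eliminate $\Delta z$ to obtain the reduced equation for $\Delta w$.

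First I would write $I - J_T$ in the $2\times 2$ block form induced by \eqref{equ: def_J_T} and left-multiply both sides of \eqref{equ: newtonstep} by the invertible unit lower-triangular matrix $L=\begin{pmatrix} I & 0 \\ -2J_\K\beta H & I\end{pmatrix}$; since $L$ is invertible this does not change the solution set. A direct block computation shows that the $-2J_\K\beta H$ row operation cancels the $2J_\D(I-\alpha P)$ term in the $(2,1)$ block and the $2\alpha J_\K\beta H J_\D H^\top$ term in the $(2,2)$ block, leaving $-J_\K\beta H$ and $I-J_\K$ respectively, while the right-hand side becomes $(\tilde R_z^k,\tilde R_w^k)$ as defined in the statement. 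The equivalent system then reads $(I-J_\D(I-\alpha P))\Delta z + \alpha J_\D H^\top\Delta w = \tilde R_z^k$ together with $-J_\K\beta H\Delta z + (I-J_\K)\Delta w = \tilde R_w^k$.

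Next I would show that the $(1,1)$ block $I-J_\D(I-\alpha P)=I-J_\D+\alpha J_\D P$ is invertible with inverse $V_\D$. The decisive structural fact is that, by the homogeneity assumption on Problem~\hyperref[equ: original problem]{1}, $P$ is block diagonal and \emph{scalar}, equal to $\rho_{ij}I$, on each block $\D_{ij}$, while $J_\D$ --- and hence $Q_\D$ and $\Lambda_\D$ --- are block diagonal over the same partition; therefore $P$ commutes with $Q_\D$ and $Q_\D^\top P Q_\D = P$. Substituting $J_\D = Q_\D\Lambda_\D Q_\D^\top$ gives $I-J_\D+\alpha J_\D P = Q_\D(I-\Lambda_\D+\alpha\Lambda_\D P)Q_\D^\top$, so invertibility of the $(1,1)$ block reduces to invertibility of the \emph{diagonal} matrix $I-\Lambda_\D+\alpha\Lambda_\D P$. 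Its $k$-th diagonal entry equals $1-\lambda_k(1-\alpha\rho_k)$, and since $\lambda_k\in[0,1]$ by Lemma \ref{thm: proj_eigenvalue_betwen_0_1} and $\alpha,\rho_k>0$, a short case distinction (on whether $\lambda_k=0$ and whether $\alpha\rho_k\le 1$) shows each entry is strictly positive; hence the matrix is invertible and conjugating its inverse by $Q_\D$ gives exactly $V_\D$. The same computation yields $V_\D J_\D = Q_\D(I-\Lambda_\D+\alpha\Lambda_\D P)^{-1}\Lambda_\D Q_\D^\top$, so $H V_\D J_\D H^\top = W_\D$, and since $(I-\Lambda_\D+\alpha\Lambda_\D P)^{-1}\Lambda_\D$ is diagonal, $W_\D$ is symmetric.

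Finally, solving the first row for $\Delta z = V_\D(\tilde R_z^k - \alpha J_\D H^\top\Delta w)$ --- the second equation in the claim --- and substituting into the second row gives $\alpha\beta J_\K W_\D\Delta w + (I-J_\K)\Delta w = \bar R_w^k$, using $W_\D = H V_\D J_\D H^\top$ and the definition of $\bar R_w^k$. This coefficient matrix is not symmetric, so to reach the stated form I would use that $\K$ is polyhedral, hence $\pi_\K$ is piecewise linear and $J_\K$ is a symmetric idempotent: applying $I-J_\K$ to the displayed identity gives $(I-J_\K)\Delta w = (I-J_\K)\bar R_w^k$, and substituting $\Delta w = J_\K\Delta w + (I-J_\K)\bar R_w^k$ inside the $J_\K W_\D$ term replaces $J_\K W_\D\Delta w$ by $J_\K W_\D J_\K\Delta w$ plus a known right-hand-side term; collecting everything produces the symmetric system $\tilde W_\D\Delta w = (I-\alpha\beta^2 J_\K W_\D(I-J_\K))\bar R_w^k$ with $\tilde W_\D = \alpha\beta J_\K W_\D J_\K + I - J_\K$. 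I expect the main obstacle to be the third step: recognizing and justifying that $P$ is block-scalar so it passes through the eigenbasis of $J_\D$ (without this the $(1,1)$ block need not be invertible or symmetric and $V_\D$ would not be its inverse), together with the strict-positivity argument for the diagonal matrix; the idempotent bookkeeping in the last step is routine once $J_\K = J_\K^\top = J_\K^2$ is available.
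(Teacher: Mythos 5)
Your proposal is correct and follows essentially the same route as the paper's proof: the same left-multiplication by the unit lower-triangular block matrix, the same use of $Q_\mathbb{D} P = P Q_\mathbb{D}$ together with the eigenvalue bounds of Lemma \ref{thm: proj_eigenvalue_betwen_0_1} to invert the $(1,1)$ block as $V_\mathbb{D}$, the same elimination of $\Delta z$, and the same $J_{\mathbb{K}^\circ}(I-J_{\mathbb{K}^\circ})=0$ bookkeeping to symmetrize the reduced system. Your added detail on why $P$ commutes with $Q_\mathbb{D}$ (block-scalar structure from the homogeneity assumption) and the explicit positivity case analysis only make explicit what the paper asserts implicitly.
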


\begin{proof}
The invertibility of \(I - \Lambda_\mathbb{D} + \alpha \Lambda_\mathbb{D} P\) is a direct consequence of Lemma \ref{thm: proj_eigenvalue_betwen_0_1} and the fact that \(P\) is a diagonal matrix with positive diagonal elements.

We start by simplifying \eqref{equ: newtonstep} to show the second result. Using the definition of $J_T$ in \eqref{equ: def_J_T}, one can verify that 
\[
I - J_T = \left[\begin{array}{cc}
I & 0 \\
2 J_\K \beta H & I
\end{array}\right]  \left[\begin{array}{cc}
I - J_\mathbb{D}(I - \alpha P) & \alpha J_\mathbb{D} H^\top \\
-J_\K \beta H & I - J_\K
\end{array}\right].
\]
Multiplying $$
\left[\begin{array}{cc}
I & 0 \\
2 J_\K \beta H & I
\end{array}\right]^{-1} = \left[\begin{array}{cc}
I & 0 \\
-2 J_\K \beta H & I
\end{array}\right]
$$
on both sides of \eqref{equ: newtonstep}, we obtain the following equivalent Newton step:
\begin{equation}
\label{equ:newton simplification}
\begin{aligned}
&\left[\begin{array}{cc}
I & 0 \\
-2 J_\K \beta H & I
\end{array}\right](I - J_T(z^k, w^k))\left(\begin{array}{c}
\Delta z \\
\Delta w
\end{array}\right) \\
&= \left[\begin{array}{cc}
I - J_\mathbb{D}(I - \alpha P) & \alpha J_\mathbb{D} H^\top \\
-J_\K \beta H & I - J_\K
\end{array}\right] \left(\begin{array}{c}
\Delta z \\
\Delta w
\end{array}\right) \\
&= \left[\begin{array}{cc}
I & 0 \\
-2 J_\K \beta H & I
\end{array}\right]  \left(T(z^k, w^k) - (z^k, w^k)\right) \\
&=: \left(\begin{array}{c}
\tilde{R}_z^k \\
\tilde{R}_w^k
\end{array}\right),
\end{aligned}
\end{equation}
where \(\tilde{R}_z^k \in \mathbb{R}^{n_z}\) and \(\tilde{R}_w^k \in \mathbb{R}^{n_w}\).

Hence, solving \ref{equ: newtonstep} is equivalent to solving the following linear system:
\begin{equation}
\label{local-equ: simplification newton step}
\left[\begin{array}{cc}
I - J_\mathbb{D}(I - \alpha P) & \alpha J_\mathbb{D} H^\top \\
-J_\K \beta H & I - J_\K
\end{array}\right] \left(\begin{array}{c}
\Delta z \\
\Delta w
\end{array}\right) = \left(\begin{array}{c}
\tilde{R}_z^k \\
\tilde{R}_w^k
\end{array}\right).
\end{equation}

We solve this linear system by eliminating \(\Delta z\) using the first row and obtaining an equation involving only \(\Delta w\). To do so, we need the inverse of \(I - J_\mathbb{D}(I - \alpha P)\). Noticing that the specific structure in Problem~\hyperref[equ: original problem]{1} ensures that \(Q_\mathbb{D} P = P Q_\mathbb{D}\), we have 
\begin{equation*}
\begin{aligned}
\left( I - J_\mathbb{D}(I - \alpha P) \right)^{-1} &= \left(I - Q_\mathbb{D} \Lambda_\mathbb{D} Q_\mathbb{D}^\top ( I - \alpha P)\right)^{-1} \\
&= \left(Q_\mathbb{D}\left(I - \Lambda_\mathbb{D} + \alpha \Lambda_\mathbb{D} P\right) Q_\mathbb{D}^\top\right)^{-1} \\
&= Q_\mathbb{D}\left(I - \Lambda_\mathbb{D} + \alpha \Lambda_\mathbb{D} P\right)^{-1} Q_\mathbb{D}^\top \\
&=: V_\mathbb{D}.
\end{aligned}
\end{equation*}
Meanwhile \begin{equation}
\label{loc_equ:vJ}
V_{\mathbb{D}} J_{\mathbb{D}}=Q_{\mathbb{D}}\left(I-\Lambda_{\mathbb{D}}+\alpha \Lambda_{\mathbb{D}} P\right)^{-1} \Lambda_{\mathbb{D}} Q_{\mathbb{D}}^{\top} .
\end{equation}
Solving the first row of \eqref{local-equ: simplification newton step}, we have
\[
\Delta z = V_\mathbb{D}\left(\tilde{R}_z^k - \alpha J_\mathbb{D} H^\top \Delta w\right).
\]

Substituting this into the second row of \eqref{local-equ: simplification newton step}, using \eqref{loc_equ:vJ}, and applying the definitions of \(W_\mathbb{D}\) and \(\bar{R}_w^k\), we obtain
\[
J_\K \alpha \beta W_\mathbb{D} \Delta w + (I - J_\K) \Delta w = \beta \bar{R}_w^k.
\]

Because of the specific form of \(\K\), \(J_\K\) is a diagonal matrix with only zeros and ones on the diagonal. Therefore, \((I - J_\K)J_\K = 0\), and we have  
\[
\begin{aligned}
(I - J_\K) \Delta w &= (I - J_\K) \beta \bar{R}_w^k, \\
\alpha \beta J_\K W_\mathbb{D} J_\K \Delta w &= J_\K \beta \bar{R}_w^k - \alpha \beta J_\K W_\mathbb{D} (I - J_\K) \Delta w.
\end{aligned}
\]

Summing both equations, we have an equivalent linear equation
\[
\begin{aligned}
&(\alpha \beta J_{\mathbb{K}^{\circ}} W_\mathbb{D} J_{\mathbb{K}^{\circ}} + I - J_{\mathbb{K}^{\circ}}) \Delta w \\
&= \beta \bar{R}_w^k - \alpha \beta J_{\mathbb{K}^{\circ}} W_\mathbb{D} (I - J_{\mathbb{K}^{\circ}}) \Delta w \\
&= (\beta I - \alpha \beta^2 J_{\mathbb{K}^{\circ}} W_\mathbb{D} (I - J_{\mathbb{K}^{\circ}})) \bar{R}_w^k.
\end{aligned}
\]\end{proof}

Lemma \ref{lemma: simplification} indicates that the nonsingularity of \(I - J_T\) is equivalent to the nonsingularity of $\tilde{W}_{\mathbb{D}}$. To demonstrate the nonsingularity of the latter, we need to make a few assumptions.

\begin{assumption}
\label{assumption licq}

The solution \(z^\star\) of Problem~\hyperref[equ: PIPG problem]{2} satisfies the extended LICQ condition. Moreover, in Problem~\hyperref[equ: original problem]{1}, each set \(\D_{ij}\) is either a second-order cone, an affine subspace, or satisfies the linear independence condition in Theorem \ref{projection theorem} at all points.

\end{assumption}

\begin{assumption}
\label{assum: symmetric Jacobian}
The Jacobian of projections onto set \(\mathbb{D}\), when it exists, is a symmetric matrix.
\end{assumption}

Since \(\mathbb{D}\) is the direct product of sets \(\D_{ij}\), Assumption \ref{assum: symmetric Jacobian} is satisfied if and only if the Jacobians of projections onto \(\D_{ij}\) are symmetric. Many sets of interest in optimal control have this property, including balls, second-order cones, boxes, points, half-spaces, hyperplanes, affine spaces, and polytopes. This ensures the applicability of our algorithm to a wide range of problems.

To ensure the differentiability of the PIPG operator, we need the following assumption:

\begin{assumption}[Strict Complementarity]
\label{assumption: interior-point}
For the fixed point \((z^\star, w^\star)\) of the PIPG operator, we require
\begin{itemize}
    \item \(-\left(P z^\star + q + H^\top w^\star\right) \in \operatorname{ri} N_\mathbb{D} (z^\star)\)
    \item \(H(z^\star) - g \in \operatorname{ri} N_\K(w^\star)\)
\end{itemize}
\end{assumption}

Assumption \ref{assumption: interior-point} is closely related to the conventional strict complementarity condition \cite[Example 4.4]{hare2004identifying}, which is usually required for a fast convergence rate in Newton-type methods \cite[Assumption 19.1]{nocedal1999numerical}.

Additionally, Assumption \ref{assumption: interior-point} is generally satisfied in practice. When \((z^\star, w^\star)\) are fixed and the vectors \(q\) and \(g\) are sampled from a Gaussian distribution, Assumption \ref{assumption: interior-point} is violated only if \(q\) lies on the relative boundary of the set \(N_\mathbb{D}(z^\star) + Pz^\star + H^\top w^\star\), or if \(-g\) lies on the relative boundary of \(N_\K(w^\star) - Hz^\star\). The conditional probability of violating Assumption \ref{assumption: interior-point}  for fixed \((z^\star, w^\star)\) is zero since the relative boundary has measure zero under the distributions of \(q\) and \(g\). Furthermore, integrating over all \((z^\star, w^\star)\) outcomes, the probability of violation remains zero. Of course, \(q\) and \(g\) may be generated by other types of mechanisms, and it is possible that Assumption \ref{assumption: interior-point} may be violated. In that case, our algorithm will still ensure convergence, though the theoretical convergence rate may not be quadratic.

We are now ready to introduce the theorem for the nonsingularity of $I - J_T(z,w).$
\begin{theorem}
\label{thm: nonsingularity}
Under Assumptions \ref{assumption licq}, \ref{assum: symmetric Jacobian}, and \ref{assumption: interior-point}, \(I - J_T(z,w)\) is a smooth function of \((z, w)\) in a local neighborhood of \((z^\star, w^\star)\), the fixed point of the PIPG operator. Moreover, \(I - J_T(z,w)\) is smooth for \((z,w)\) in that neighborhood.
\end{theorem}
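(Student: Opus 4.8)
The plan is to prove the statement in two stages: first establish that the PIPG operator $T$ is smooth (equivalently, that $\pi_\D$ and $\pi_\K$ are smooth) near $(z^\star,w^\star)$, and then establish that $I-J_T$ is nonsingular there, which by Lemma~\ref{lemma: simplification} reduces to showing $\tilde W_\D$ is nonsingular. Smoothness of $I-J_T$ as a function of $(z,w)$ follows once we know $J_\D$ and $J_\K$ are themselves smooth in a neighborhood, since $J_T$ is built from $J_\D,J_\K,P,H$ by the explicit chain-rule formulas in \eqref{equ: def_J_T}.

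For the smoothness stage, the key observation is that Assumption~\ref{assumption: interior-point} places $-(Pz^\star+q+H^\top w^\star)$ in $\operatorname{ri}N_\D(z^\star)$ and $Hz^\star-g$ in $\operatorname{ri}N_\K(w^\star)$, i.e. the arguments at which $J_\D$ and $J_\K$ are evaluated in the Newton step are exactly $z^\star + \bar n$ and $w^\star + \bar n'$ with $\bar n,\bar n'$ in the relative interiors of the respective normal cones. I would decompose $\D = \prod_{i,j}\D_{ij}$ and treat each factor: for $\D_{ij}$ satisfying the linear-independence condition of Theorem~\ref{projection theorem}, that theorem gives an open neighborhood $K$ of $z^\star_{ij}+\bar n_{ij}$ on which $\pi_{\D_{ij}}$ is smooth; for affine subspaces the projection is globally affine hence smooth; for second-order cones with projection at the origin, Lemma~\ref{lemma: second_order_cone null space} gives a neighborhood on which $\pi_{\D_{ij}}\equiv 0$, again smooth. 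The cone $\K$ is a product of subspaces and orthants; $\pi_\K$ is smooth at any point in the relative interior of a normal cone since locally it is just a coordinate projection onto a fixed subface. Since the PIPG update is continuous, for $(z,w)$ in a small enough neighborhood of $(z^\star,w^\star)$ the arguments $z-\alpha(Pz+q+H^\top w)$ and $w+\beta(H(2z^{k+1}-z)-g)$ stay in these open neighborhoods $K$, so $\pi_\D,\pi_\K$ — and hence $T$ and $J_T$ — are smooth there.

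For the nonsingularity stage, by Lemma~\ref{lemma: simplification} it suffices to show $\tilde W_\D = \alpha\beta J_\K W_\D J_\K + I - J_\K$ is invertible, where $W_\D = HV_\D J_\D H^\top$ with $V_\D J_\D = Q_\D(I-\Lambda_\D+\alpha\Lambda_\D P)^{-1}\Lambda_\D Q_\D^\top$. Using Lemma~\ref{thm: proj_eigenvalue_betwen_0_1}, the eigenvalues of $J_\D$ lie in $[0,1]$ and $P$ is diagonal positive, so $(I-\Lambda_\D+\alpha\Lambda_\D P)^{-1}\Lambda_\D$ is diagonal with nonnegative entries; thus $W_\D\succeq 0$, and since $J_\K$ is a diagonal $0/1$ projection, $\tilde W_\D$ acts as the identity on $\ker J_\K$ and as $\alpha\beta J_\K W_\D J_\K$ on $\operatorname{range}J_\K$. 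So $\tilde W_\D$ is singular iff there is $0\ne v\in\operatorname{range}J_\K$ with $J_\K W_\D v = 0$, i.e. $v^\top W_\D v=0$, i.e. $J_\D H^\top v$ lies in the kernel of $V_\D J_\D$, i.e. in $\ker\Lambda_\D$ (after the orthogonal change of basis), equivalently $H^\top v$ lies in the null space of $J_\D$. Here I would invoke Theorem~\ref{thm: nullspace of jacobian} (together with the affine-subspace and Lemma~\ref{lemma: second_order_cone null space} cases) to conclude $\ker J_\D \subseteq \operatorname{span}N_\D(z^\star)$, so $H^\top v \in \operatorname{span}N_\D(z^\star)$. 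On the other hand $v\in\operatorname{range}J_\K$ forces, via the structure of $\K$ and Assumption~\ref{assumption: interior-point}, that the support of $v$ is contained in the indices of active inequality rows (plus all equality rows), so $H^\top v\in\operatorname{range}([H_E^\top,H_{A_I}^\top])$; moreover one checks $H^\top v\ne0$ because $[H_E^\top,H_{A_I}^\top]$ has full column rank by extended-LICQ and $v\ne0$ projects nontrivially. This puts a nonzero vector in $\operatorname{range}([H_E^\top,H_{A_I}^\top])\cap\operatorname{span}N_\D(z^\star)$, contradicting extended-LICQ. Hence $\tilde W_\D$, and therefore $I-J_T(z^\star,w^\star)$, is nonsingular; by continuity (smoothness from the first stage, plus nonsingularity being an open condition) $I-J_T(z,w)$ remains nonsingular on a possibly smaller neighborhood.

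The main obstacle I anticipate is the bookkeeping in the nonsingularity argument: carefully matching "$v\in\operatorname{range}J_\K$" to "support of $v$ sits on active/equality rows" using the exact product structure of $\K=\prod_i(0^{n_{E_i}}\times\mathbb{R}_+^{n_{I_i}})$ and Assumption~\ref{assumption: interior-point}, and likewise tracking how $\ker(V_\D J_\D)$ relates through the eigendecomposition $J_\D=Q_\D\Lambda_\D Q_\D^\top$ back to $\ker J_\D$ and then to $\operatorname{span}N_\D(z^\star)$ via Theorem~\ref{thm: nullspace of jacobian}. The smoothness stage is comparatively routine once the three set-types are handled separately, and the reduction of everything to extended-LICQ is the conceptual heart of the proof.
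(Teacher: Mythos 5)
Your proposal is correct and takes essentially the same route as the paper: smoothness via the product structure of $\operatorname{ri}N_\D$, Theorem~\ref{projection theorem}, Lemma~\ref{lemma: second_order_cone null space}, and Assumption~\ref{assumption: interior-point}, and then nonsingularity by reducing to $\tilde W_\D$ through Lemma~\ref{lemma: simplification}, exploiting the $0/1$ structure of $J_\K$ and positive semidefiniteness of $W_\D$ to land $H^\top v$ in $\ker J_\D\subseteq\operatorname{span}N_\D(z^\star)$ (Theorem~\ref{thm: nullspace of jacobian}), and contradicting extended LICQ. The only cosmetic difference is that you phrase the final step as a direct contradiction (nonzero vector in the intersection) while the paper first deduces $J_\K v=0$ from full column rank and then $v=0$; the substance is identical.
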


\begin{proof}
We start with the differentiability of \(I - J_T(z, w)\). Utilizing the following fact \cite[Section 6]{rockafellar1970convex},
\[
\operatorname{ri} N_\mathbb{D}(z) = \prod_{i=1}^N \prod_{j=1}^{m_i} \operatorname{ri} N_{\mathbb{D}_{ij}}(z_{ij}),
\]
and according to Theorem \ref{projection theorem}, Lemma \ref{lemma: second_order_cone null space}, Assumption \ref{assumption licq} and Assumption \ref{assumption: interior-point}, it is ensured that \(\pi_\mathbb{D}\) is a smooth function in a neighborhood of \(z^\star - \alpha(Pz^\star + q + H^\top w^\star)\). Similarly, \(\pi_\K\) is a smooth function in a neighborhood of \(w^\star + \beta (H(z^\star) - g)\). Hence, \(I - J_T(z, w)\) is also a smooth function in a neighborhood of \((z^\star, w^\star)\).

Next, we move to the invertibility. According Lemma \ref{lemma: simplification}, \(I - J_T(z^\star, w^\star)\) is invertible if and only if 
\[
\tilde{W}_{\mathbb{D}} =\alpha \beta J_\K H \left(Q_\mathbb{D} (I - \Lambda_\mathbb{D} + \alpha \Lambda_\mathbb{D} P)^{-1} \Lambda_\mathbb{D} Q_\mathbb{D}^\top\right) H^\top J_\K + I - J_\K
\]
is invertible. Let \(v\) be a vector such that \(\tilde{W}_{\mathbb{D}}v = 0\). Since $J_\K$ is a diagonal matrix with zeros and ones on the diagonal, and 
$$
 H \left(Q_\mathbb{D} (I - \Lambda_\mathbb{D} + \alpha \Lambda_\mathbb{D} P)^{-1} \Lambda_\mathbb{D} Q_\mathbb{D}^\top\right) H^\top
$$
is positive semi-definite, we have 
\begin{equation}
\label{local_equ: null space}
\begin{aligned}
(I - J_\K) v &= 0, \\
\Lambda_\mathbb{D} Q_\mathbb{D}^\top H^\top J_\K v &= 0.
\end{aligned}
\end{equation}

Observe that the null space of \(\Lambda_\mathbb{D} Q_\mathbb{D}\) coincides with the null space of \(J_\mathbb{D}\). Additionally, \(J_\mathbb{D}\) is a block diagonal matrix. The null space for each diagonal block is a subset of \(\operatorname{span}(N_{\mathbb{D}_{ij}}(z^\star_{ij}))\), as ensured by Assumption \ref{assumption licq}, Theorem \ref{thm: nullspace of jacobian} and Lemma \ref{lemma: second_order_cone null space}. Hence, the null space of \(J_\mathbb{D}\) is a subset of 
\[
\operatorname{span}(N_\mathbb{D}(z)) = \prod_{i=1}^N \prod_{j=1}^{m_i} \operatorname{span}(N_{\mathbb{D}_{ij}}(z_{ij})),
\]
and \(H^\top J_\K v \in \operatorname{span}(N_\mathbb{D}(z))\). Meanwhile, \(H^\top J_\K\) shares the same column space with \([H_E^\top, H_{A_I}^\top]\), where \(H_E\) and \(H_{A_I}\) are defined in the extended LICQ assumption. Hence, the extended LICQ ensures that \(J_\K v = 0\). Since \((I - J_\K) v = 0\), it follows that \(v = 0\) and thus \(I - J_T\) is invertible at \((z^\star, w^\star)\). As \(I - J_T\) is a smooth function of \((z, w)\) in a neighborhood of \((z^\star, w^\star)\), it remains invertible in some neighborhood of \((z^\star, w^\star)\) .\end{proof}
\begin{remark}
\(I - J_T\) may not be invertible if the current iteration is not close to the primal-dual solution pair. For methods to handle the singular \(I - J_T\), see Section \ref{sec:Numerical implementation}. Importantly, this non-invertibility does not compromise the global convergence or the local convergence rate.

\end{remark}

\subsection{Efficient Computation of Linear System in Newton Step}
\label{sec: chol}

To efficiently factorize the invertible matrix 
$$\tilde{W}_\D := \alpha \beta J_\K W_{\mathbb{D}} J_\K + I - J_\K,$$ 
we use an approach from \cite{wang2009fast}. This approach leverages the sparsity inherent in the optimal control problem, reducing computational costs to a linear function of the total number of time points \(N\).

By definition, 
$$W_\D = HQ_\D(I - \Lambda_{\mathbb{D}} + \Lambda_{\mathbb{D}} \alpha P)^{-1} \Lambda_{\mathbb{D}} Q_\D^{\top} H^{\top}.$$ 
Since $Q_\D(I - \Lambda_{\mathbb{D}} + \Lambda_{\mathbb{D}} \alpha P)^{-1} \Lambda_{\mathbb{D}} Q_\D$ is a block diagonal matrix, we can write it as:
\begin{equation*}
\begin{aligned}
Q_\D(I - \Lambda_{\mathbb{D}} + \Lambda_{\mathbb{D}} \alpha P)^{-1} \Lambda_{\mathbb{D}} Q_\D = 
 \operatorname{blkdiag}(U_0, U_1, U_2, \ldots, U_N, U_{N+1}),
\end{aligned}
\end{equation*}
where $U_i \in \MR^{n_{z_{i}} \times n_{z_i}}$. Hence, $W_\D$ will have the following pattern:

\begin{equation}
\label{equ:compute V}
\begin{aligned}
W_D
    &=\left[\begin{array}{cccccc}
W_{00} & W_{01} & 0 & \cdots & 0 & 0 \\
W_{10} & W_{11} & W_{12} & \cdots & 0 & 0 \\
0 & W_{21} & W_{22} & \cdots & 0 & 0 \\
\vdots & \vdots & \vdots & \ddots & \vdots & \vdots \\
0 & 0 & 0 & \cdots & W_{N-1, N-1} & W_{N-1, N} \\
0 & 0 & 0 & \cdots & W_{N, N-1} & W_{N, N}
\end{array}\right],
\end{aligned}
\end{equation}
where $W_{i,i}\in \mathbb{R}^{n_i\times n_i},$ $W_{i, i+1}\in \mathbb{R}^{n_i\times n_{i+1}}$, $W_{i+1, i}\in \mathbb{R}^{n_{i+1}\times n_i}$ given that $n_i = n_{E_i} + n_{I_i}.$ Using the definition of $H$ in \eqref{equ: H def}, we have
\begin{equation}
\label{equ:newton equ3}
\begin{aligned}
W_{i, i}= &A_{i}U_{i} A_{i}^\top+B_i U_{i+1} B_i^\top,\  i\in [0, N]_\mathbb{N}, \\
W_{i, i+1}= & W_{i+1, i}^\top=B_i U_{i+1} A_{i}^\top , \quad i\in [0, N-1]_\mathbb{N},
\end{aligned}
\end{equation}

Since $J_\K$ is a diagonal matrix with only zeros and ones on its diagonal, $\tilde{W}_\D$ will follow exactly the same pattern as $W_\D$. Denote $m_k = \sum_{i = 0}^{k-1} n_i$ for $k = [1, N]_\mathbb{N}$, $m_0 = 0$, and $J_{\K_i} \in \mathbb{R}^{n_i \times n_i}$ being a diagonal matrix whose diagonal elements equal the $m_i$ to $m_{i+1}$ elements of $J_\K$. Then we have
\begin{align}
\tilde{W}_\D=
&\left[\begin{array}{cccccc}
\tilde{W}_{00} & \tilde{W}_{01} & 0 & \cdots & 0 & 0 \\
\tilde{W}_{10} & \tilde{W}_{11} & \tilde{W}_{12} & \cdots & 0 & 0 \\
0 & \tilde{W}_{21} & \tilde{W}_{22} & \cdots & 0 & 0 \\
\vdots & \vdots & \vdots & \ddots & \vdots & \vdots \\
0 & 0 & 0 & \cdots & \tilde{W}_{N-1, N-1} & \tilde{W}_{N-1, N} \\
0 & 0 & 0 & \cdots & \tilde{W}_{N, N-1} & \tilde{W}_{N, N}
\end{array}\right],
  \end{align}
where 
\begin{align}
    \tilde {W}_{i,i} &= \alpha \beta J_{\K_i} W_{i,i} J_{\K_i} + I - J_{\K_i}, i \in [0, N]_\mathbb{N}\\
    \tilde{W}_{i, i+1}&= \tilde{W}_{i, i+1}^\top =  \alpha \beta J_{\K_i} W_{i,i+1} J_{\K_{i+1}}, i \in [0, N-1]_\mathbb{N}.
\end{align}

Next, we compute the Cholesky decomposition $\tilde{W}_\D = LL^\top$ using the factorization technique from \cite{wang2009fast}, where $L$ is lower triangular. The Cholesky factor $L$ has a lower bidiagonal block structure.

\begin{equation*}
L=\left[\begin{array}{cccccc}
L_{00} & 0 & 0 & \cdots & 0 & 0 \\
L_{01} & L_{11} & 0 & \cdots & 0 & 0 \\
0 & L_{21} & L_{22} & \cdots & 0 & 0 \\
\vdots & \vdots & \vdots & \ddots & \vdots & \vdots \\
0 & 0 & 0 & \cdots & L_{N-1, N-1} & 0 \\
0 & 0 & 0 & \cdots & L_{N, N-1} & L_{N, N}
\end{array}\right],
\end{equation*}
 where $L_{i, i}\in \mathbb{R}^{n_i\times n_i},$ $L_{i+1, i}\in \mathbb{R}^{n_{i+1}\times n_i}.$ Specifically, $L_{ii}$ are lower triangular matrices. Since $L L^\top=\tilde{W}_\D$, we have

\begin{equation}
\label{equ: newton factorization method}
\begin{aligned}
L_{00} L_{00}^\top & =\tilde{W}_{00} \\
L_{i, i} L_{i+1, i}^\top & =\tilde{W}_{i, i+1} \quad i\in \intset{0}{N-1}, \\
L_{i, i} L_{i, i}^\top & =\tilde{W}_{i i}-L_{i, i-1} L_{i, i-1}^\top, \quad i \in \intset{1}{N} .
\end{aligned}
\end{equation}

To compute $L_{00}$, we employ the Cholesky decomposition on $\tilde{W}_{00}$. Subsequently, $L_{10}$ is determined through forward substitution from the equation $L_{00} L_{10}^\top = \tilde{W}_{01}$. Next, $L_{11}$ is obtained by performing Cholesky decomposition on the matrix $\tilde{W}_{11} - L_{10} L_{10}^\top$. This procedure is repeated iteratively to construct the entire $L$ matrix.

We summarize the procedure for computing the Newton step in Algorithm \ref{algo: newtonstep for pipg}. It combines the insights from Lemma \ref{lemma: simplification} with the Cholesky factorization. Details of implementations are left to Section \ref{sec:Numerical implementation}.

\begin{algorithm}
\caption{Newton step for PIPG}
\label{algo: newtonstep for pipg}
\begin{algorithmic}[1]
\STATE \textbf{Require }  $H, \alpha, \beta, J_\D, J_\K, P, z^k, w^k$
\STATE \textbf{Compute } $R^k =T(z^k, w^k)-(z^k, w^k)$
\STATE \textbf{Compute } $\left(\begin{array}{c}
\tilde{R}_z^k \\
\tilde{R}^k_w
\end{array}\right)=\left[\begin{array}{cc}
I & 0 \\
-2 \beta H & I
\end{array}\right] R^k$
\STATE \textbf{Compute decomposition} $J_\D = Q_\D \Lambda_\D Q_\D^\top$ 
\STATE \textbf{Compute }$\bar R_{w}^k=J_{\K} \beta H V_\D \tilde R_z^k+\tilde{R}_w^k.$
\STATE \textbf{Compute matrix $\tilde{W}_\D$ using $Q_\D$ and $\Lambda_\D$}
\STATE \textbf{Compute $L$ for $\tilde{W}_\D$ using \eqref{equ: newton factorization method}}
\STATE \textbf{Compute } $$\Delta w=\left(L^\top\right)^{-1}  L^{-1}   (I-\alpha \beta^2 J_{\mathbb{K}^{\circ}} W_{\mathbb{D}}\left(I-J_{\mathbb{K}^{\circ}}\right)) \bar{R}_w^k$$
\STATE \textbf{Compute } $\Delta z=V_\D\left(\tilde{R}_z^k-\alpha J_\D H^{\top} \Delta w\right)$
\end{algorithmic}
\end{algorithm}


\section{Global Convergence for Newton-PIPG}
\label{sec:global convergence}
In this section, we introduce the Newton-PIPG algorithm, a hybrid of the PIPG operator and the Newton step.

We define the residual of the PIPG operator as $$
R(z,w) := T(z,w) - (z,w),
$$ and we denote $\operatorname{Fix}(T)$ the fixed point set of $T.$ Rather than employing the conventional Euclidean norm as the convergence criterion, we adopt an alternative norm:
$\|z\|_M:=\sqrt{\langle z, M z\rangle}$, where $M$ is a positive definite matrix expressed as
\begin{equation*}
M=\left[\begin{array}{cc}
\frac{1}{\alpha} I-P & -H^{\top} \\
-H & \frac{1}{\beta} I
\end{array}\right].
\end{equation*}
The positive definiteness of $M$ is proved later in Lemma \ref{thm: pd of M}. The associated inner product with this matrix is given by:
$$
\langle x,  y\rangle_M = \langle x, M y\rangle.
$$
and the norm $\| \cdot \|_M$ for a given matrix $A$ is defined as 
$$
\| A \|_M = \max_{\|x\|_M \leq 1}  \|Ax\|_M.
$$

The norm $\| \cdot \|_M$ is intrinsic to the proof of PIPG's convergence, rendering it a convenient choice for establishing convergence for the Newton-PIPG algorithm. Due to the equivalence among norms on finite-dimension problems, the convergence in $\| \cdot \|_M$ is equivalent to that in the Euclidean norm.

In order to ensure the convergence of the Newton method, we design Algorithm \ref{algo} that combines the PIPG with the Newton step, inspired by \cite{themelis2019supermann}. In this algorithm, the PIPG algorithm will be a safeguard to ensure global convergence, while the Newton step will be used to accelerate PIPG locally.
\begin{algorithm}
\caption{Newton-PIPG}
\label{algo}
\begin{algorithmic}[1]
\STATE \textbf{Require}  $c\in[0,1), e\geq0, t>0$, Initial iteration $(z^0, w^0)$
\STATE \textbf{Initialize} $k=0$
\WHILE{ termination criteria not satisfied }
\STATE \textbf{Compute} an update $p^k = (\Delta z^k, \Delta w^k)$ either be zero or from the Newton step

\IF{$$\left\|R(z^k + \Delta z^k, w^k + \Delta w^k)\right\|_M \leq c \left\|R (z^k, w^k)\right \|_M$$ and $$\|p^k\|_M < \sigma \|R(z^k, w^k)\|_M$$}

\STATE \textbf{Newton update :}$$(z^{k+1}, w^{k+1}) = (z^k + \Delta z^k, w^k + \Delta w^k)$$

\ELSE
\STATE \textbf{PIPG update :} $(z^{k+1},w^{k+1}) = \operatorname{T
}(z^{k}, w^{k})$
\ENDIF

\STATE k = k+1
\textit{}\ENDWHILE
\end{algorithmic}
\end{algorithm}

Algorithm \ref{algo} provides flexibility in choosing the update \( p^k \) while ensuring global convergence, as demonstrated in Theorem \ref{thm: convergence of PIPG}. In practice, \( p^k \) can be set to zero or to the solution of the Newton step. When \( p^k \) is zero, the Newton update is disabled. To achieve fast computation, we periodically activate the Newton step instead of using it at each iteration. Details on selecting \( p^k \) are available in Section \ref{sec:Numerical implementation}.

The termination criteria for Algorithm \ref{algo} is discussed in Section \ref{sec: termination criteria}. The coefficient $\sigma$ in Algorithm \ref{algo} is a safeguard ensuring that the Newton update does not move $(z^k, w^k)$ to a point far away from the current iteration. The use of such a coefficient $\sigma$ is inspired by \cite{themelis2019supermann}.


\subsection{Convergence Analysis of Newton-PIPG}

First, we show the positive definiteness of $M$.
\begin{lemma}
\label{thm: pd of M}
Assuming $\alpha, \beta > 0$, and $\alpha (\|P\| + \beta \|H\|^2) < 1$, $M$ is a positive definite matrix. 
\end{lemma}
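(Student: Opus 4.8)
The plan is to apply the Schur complement criterion for positive definiteness of a symmetric block matrix. Since $\beta>0$, the lower-right block $\tfrac1\beta I$ is positive definite, so $M\succ 0$ if and only if its Schur complement with respect to that block is positive definite; because $\bigl(\tfrac1\beta I\bigr)^{-1}=\beta I$, that Schur complement is
\[
S:=\tfrac1\alpha I-P-(-H^\top)(\beta I)(-H)=\tfrac1\alpha I-P-\beta H^\top H.
\]
So it suffices to show $S\succ 0$ under the stated hypotheses.

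To show $S\succ 0$ I would bound the two subtracted terms from above in the Loewner order. The matrix $P$ is (by the structure of Problem~\hyperref[equ: original problem]{1}) diagonal with positive diagonal entries $\rho_{ij}$, in particular symmetric, so $P\preceq\|P\|I$; and $\beta H^\top H\preceq\beta\|H\|^2 I$ directly from the definition of the spectral norm. Hence
\[
S\;\succeq\;\Bigl(\tfrac1\alpha-\|P\|-\beta\|H\|^2\Bigr)I\;=\;\tfrac1\alpha\bigl(1-\alpha\|P\|-\alpha\beta\|H\|^2\bigr)I,
\]
which is a strictly positive multiple of the identity precisely because the hypothesis reads $\alpha(\|P\|+\beta\|H\|^2)<1$. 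Therefore $S\succ 0$, and the Schur complement criterion gives $M\succ 0$. As a self-contained alternative one can expand $\langle (z,w),M(z,w)\rangle=\tfrac1\alpha\|z\|^2-z^\top Pz-2z^\top H^\top w+\tfrac1\beta\|w\|^2$, use Young's inequality $2|z^\top H^\top w|\le\beta\|Hz\|^2+\tfrac1\beta\|w\|^2$ to cancel the $\tfrac1\beta\|w\|^2$ term, bound the remainder below by $(\tfrac1\alpha-\|P\|-\beta\|H\|^2)\|z\|^2\ge 0$, and then observe that equality forces $z=0$, in which case the quadratic form equals $\tfrac1\beta\|w\|^2>0$ unless $w=0$ as well.

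There is no substantive obstacle in this argument; the only points needing care are (i) invoking that $P$ is symmetric (indeed diagonal and positive) so that $P\preceq\|P\|I$ is legitimate, and (ii) keeping track of signs so that the strict inequality $\alpha\|P\|+\alpha\beta\|H\|^2<1$ yields a \emph{strictly} positive Schur complement rather than merely a positive semidefinite one.
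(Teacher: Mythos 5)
Your proof is correct. Your primary route — the Schur complement criterion — is a genuinely different argument from the paper's: the paper expands the quadratic form $v^\top M v$ directly and applies Young's inequality $2 v_2^\top H v_1 \le \tfrac1\beta\|v_2\|^2 + \beta\|H\|^2\|v_1\|^2$ to absorb the cross term, which is exactly the ``self-contained alternative'' you sketch at the end. The Schur complement version buys a cleaner treatment of strictness: once $S \succeq \tfrac1\alpha\bigl(1-\alpha\|P\|-\alpha\beta\|H\|^2\bigr)I \succ 0$ and $\tfrac1\beta I \succ 0$, positive definiteness of $M$ follows with no case analysis. The paper's direct expansion, by contrast, yields a lower bound depending only on $\|v_1\|$, so the case $v_1=0$, $v_2\neq 0$ needs the separate observation that the form then equals $\tfrac1\beta\|v_2\|^2>0$ — a point the paper passes over with ``equality holds only when $v=0$'' but which you handle explicitly. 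Your care in noting that $P$ is diagonal with positive entries (hence $P\preceq\|P\|I$ is legitimate) is also appropriate, since the Loewner bound requires symmetry.
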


\begin{proof}
Consider any vector $v=\left[\begin{array}{l}v_1 \\ v_2\end{array}\right],$ where $v_1 \in \mathbb{R}^{n_z}$ and $v_2 \in \mathbb{R}^{n_x(N+1)}.$ We have
$$
\begin{aligned}
\|v\|_M & =\left[v_1^{\top}, v_2^{\top}\right]\left[\begin{array}{cc}
\frac{1}{\alpha} I-P & -H^{\top} \\
-H & \frac 1 \beta I
\end{array}\right] \left[\begin{array}{l}
v_1 \\
v_2
\end{array}\right] \\
& =\frac{1}{\alpha}\left\|v_1\right\|_2^2+\frac{1}{\beta}\left\|v_2\right\|_2^2-2 v_2^{\top}  H  v_1-v_1^\top P v_1.
\end{aligned}
$$
Note that
$$
2 v_2^{\top} H v_1 \leq 2 \left\|v_1\right\|_2 \left\|v_2\right\|_2\|H\|_2 \leq \frac{1}{\beta}\left\|v_2\right\|_2^2+\beta\left\|v_1\right\|_2^2 \|H\|_2^2 .
$$
Thus $$\|v\|_M \geqslant \frac{1}{\alpha}\left\|v_1\right\|_2^2-\| P\|\left\| v_1\right\|_2^2-\beta \left\| H\right\|_2^2 \left\| v_1 \right\|_2^2\geqslant 0,$$ according to the assumption $\alpha, \beta > 0$ and $\alpha (\|P\| + \beta \|H\|^2) < 1$. The equality holds only when $v = 0.$\end{proof}

To simplify the notation, we set $\tilde z^k = (z^k, w^k),\ T\tilde z = T (z, w)$ and $R \tilde z = R(z, w).$ The following theorem, taken from \cite[Lemma 2]{yu2022extrapolated}, indicates a key property for the PIPG operator.
\begin{theorem}
\label{thm: convergence of PIPG}
Under the assumption in Lemma \ref{thm: pd of M}, for the PIPG operator $T,$ there exists a non-expansive operator $\tilde{T}$,  i.e. $\| \tilde T x - \tilde T y\|_M \leq \|x - y\|_M$, such that $T = \frac{1}{2} I+ \frac{1}{2} \tilde{T}$. 
\end{theorem}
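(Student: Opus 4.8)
The assertion is that $T$ is firmly nonexpansive with respect to $\|\cdot\|_M$; equivalently, that $\tilde T := 2T - I$ is nonexpansive in $\|\cdot\|_M$, because a short expansion shows $\|(2T-I)x - (2T-I)y\|_M \le \|x-y\|_M$ holds if and only if $\|Tx-Ty\|_M^2 \le \langle Tx-Ty,\, x-y\rangle_M$, and in that case $T = \tfrac12 I + \tfrac12\tilde T$ with $\tilde T$ nonexpansive. So the strategy is to exhibit $T$ as the resolvent of a maximal monotone operator on the Euclidean space $\mathbb{R}^{n_z+n_w}$ \emph{re-metrized} by the inner product $\langle\cdot,\cdot\rangle_M$, and then invoke the classical fact that resolvents of maximal monotone operators are firmly nonexpansive in the ambient metric.

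\textbf{Step 1 (implicit reformulation of the PIPG update).} Writing $\tilde z = (z,w)$ and reading off the first-order optimality conditions of the two projections in \eqref{PIPG iter}: from $z^{k+1} = \pi_\mathbb{D}[z^k - \alpha(Pz^k + q + H^\top w^k)]$ one gets $0 \in N_\mathbb{D}(z^{k+1}) + \tfrac1\alpha(z^{k+1}-z^k) + Pz^k + q + H^\top w^k$, and adding and subtracting $Pz^{k+1}$ and $H^\top w^{k+1}$ rearranges this to
\[
0 \in N_\mathbb{D}(z^{k+1}) + Pz^{k+1} + q + H^\top w^{k+1} + \bigl(\tfrac1\alpha I - P\bigr)(z^{k+1}-z^k) - H^\top(w^{k+1}-w^k).
\]
Likewise, from $w^{k+1} = \pi_{\mathbb{K}^{\circ}}[w^k + \beta(H(2z^{k+1}-z^k)-g)]$ one obtains
\[
0 \in N_{\mathbb{K}^{\circ}}(w^{k+1}) - Hz^{k+1} + g - H(z^{k+1}-z^k) + \tfrac1\beta(w^{k+1}-w^k).
\]
Defining the set-valued map
\[
\mathcal{A}(z,w) := \bigl(\, N_\mathbb{D}(z) + Pz + q + H^\top w,\ \ N_{\mathbb{K}^{\circ}}(w) - Hz + g\,\bigr),
\]
the two displays say exactly that $0 \in \mathcal{A}(\tilde z^{k+1}) + M(\tilde z^{k+1} - \tilde z^k)$, i.e.\ $\tilde z^{k+1} = (M+\mathcal{A})^{-1} M \tilde z^k = T\tilde z^k$. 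Note the explicit forward term $Pz^k$ and the extrapolation $2z^{k+1}-z^k$ are precisely what force the correction terms to assemble into $M(\tilde z^{k+1}-\tilde z^k)$ with the $(1,1)$-block $\tfrac1\alpha I - P$.

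\textbf{Step 2 (maximal monotonicity of $\mathcal{A}$, and conclusion).} The map $\mathcal{A}$ is the sum of the block-diagonal operator $(z,w)\mapsto N_\mathbb{D}(z)\times N_{\mathbb{K}^{\circ}}(w)$, which is maximal monotone (each $N_{(\cdot)}$ is the subdifferential of a proper lower semicontinuous convex indicator), and the single-valued, everywhere-defined, monotone map $(z,w)\mapsto(Pz + H^\top w + q,\ -Hz + g)$ (monotone since $P\succeq 0$ and the linear coupling is skew-symmetric; constant shifts do not affect monotonicity); by Rockafellar's sum theorem the sum is maximal monotone. By Lemma~\ref{thm: pd of M}, the hypotheses $\alpha,\beta>0$ and $\alpha(\|P\|+\beta\|H\|^2)<1$ give $M\succ0$, so $\langle\cdot,\cdot\rangle_M$ is a genuine inner product and $M^{-1}\mathcal{A}$ is maximal monotone on $(\mathbb{R}^{n_z+n_w},\langle\cdot,\cdot\rangle_M)$. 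Hence $T=(M+\mathcal{A})^{-1}M=(I+M^{-1}\mathcal{A})^{-1}$ is the resolvent of a maximal monotone operator on this Hilbert space: by Minty's theorem it is single-valued with full domain (consistent with $T$ being well defined via projections), and it is firmly nonexpansive in $\|\cdot\|_M$, i.e.\ $\|Tx-Ty\|_M^2\le\langle Tx-Ty,\,x-y\rangle_M$ for all $x,y$. As observed at the outset, this is equivalent to $\tilde T:=2T-I$ being nonexpansive in $\|\cdot\|_M$, and $T=\tfrac12 I+\tfrac12\tilde T$, which is the claim.

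\textbf{Main obstacle.} The only delicate part is Step 1: bookkeeping the explicit gradient step $Pz^k$ and the extrapolated argument $2z^{k+1}-z^k$ so that the leftover error terms combine \emph{exactly} into $M(\tilde z^{k+1}-\tilde z^k)$ with the correct blocks (in particular the cancellation producing $\tfrac1\alpha I - P$ and the off-diagonal $-H^\top$, $-H$). Once that identification is in hand, Steps 2–3 are standard facts about maximal monotone operators and their resolvents. (This argument reproduces the proof of \cite[Lemma 2]{yu2022extrapolated}.)
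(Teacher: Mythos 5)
Your proof is correct and is exactly the argument behind \cite[Lemma 2]{yu2022extrapolated}, which the paper cites for this theorem without giving its own proof: rewrite the PIPG step as the preconditioned proximal-point inclusion $0\in\mathcal{A}(\tilde z^{k+1})+M(\tilde z^{k+1}-\tilde z^k)$, observe that $\mathcal{A}$ is maximal monotone (normal cones plus a monotone affine map with skew coupling and $P\succeq 0$), and conclude $T=(I+M^{-1}\mathcal{A})^{-1}$ is the resolvent of a maximal monotone operator in the $M$-inner product, hence firmly nonexpansive, hence $\tilde T=2T-I$ is nonexpansive. The block bookkeeping in Step~1 and the equivalence between firm nonexpansiveness and nonexpansiveness of $2T-I$ are both verified correctly.
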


We present the following theorem, which is a standard result concerning the convergence of algorithms based on nonexpansive operators. The proof, originally from \cite[Theorem 5.14]{bauschke2011convex}, is included here for completeness.

\begin{theorem}
\label{thm: km-convergence}
Suppose that $T$ satisfies  $T = \lambda I + ( 1 - \lambda) \tilde T$, where $\tilde T$ is nonexpansive, i.e. $\| \tilde T x - \tilde T y\|_M \leq \|x - y\|_M$. Considering an algorithm that $\tilde z^{k+1} = T\tilde z^{k}$. Then, the operator $R = I - T$ is continuous, and $\|R\tilde z^k\|_M$ is decreasing. Furthermore, if the fixed point set of $T$ is nonempty, the following hold: 
\begin{itemize}
    \item $\|\tilde z^{k+1}-y\|_M^2 \leq \|\tilde z^k-y\|_M^2-\lambda\left(1-\lambda\right)\|R \tilde z^k\|_M^2$ for any $y \in \operatorname{Fix}(T)$
    \item $\|R \tilde z^k\|_M$ converges to zero. 
    \item $\tilde z^k$ will converge to a fixed point of $T.$
\end{itemize}
\end{theorem}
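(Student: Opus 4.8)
The plan is to prove this via the classical Krasnosel'ski{\u\i}--Mann analysis, specialized to the averaged operator $T=\lambda I+(1-\lambda)\tilde T$ with $\tilde T$ nonexpansive in the $\|\cdot\|_M$ norm. Continuity of $R=I-T$ is immediate since $T$ is a convex combination of the identity and a (Lipschitz, hence continuous) nonexpansive map. The three bulleted claims should be established in the order listed, since each feeds the next.

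First I would prove the Fej\'er-type inequality. Fix $y\in\operatorname{Fix}(T)$, so $Ty=y$ and also $\tilde Ty=y$ (because $y=Ty=\lambda y+(1-\lambda)\tilde Ty$ forces $\tilde Ty=y$). Write $\tilde z^{k+1}-y = \lambda(\tilde z^k-y)+(1-\lambda)(\tilde T\tilde z^k-y)$ and expand the squared $M$-norm using the identity $\|\lambda a+(1-\lambda)b\|_M^2=\lambda\|a\|_M^2+(1-\lambda)\|b\|_M^2-\lambda(1-\lambda)\|a-b\|_M^2$ with $a=\tilde z^k-y$ and $b=\tilde T\tilde z^k-\tilde Ty$. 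Nonexpansiveness gives $\|b\|_M\le\|a\|_M$, and $a-b=\tilde z^k-\tilde T\tilde z^k=(\tilde z^k - T\tilde z^k)/(1-\lambda)\cdot(1-\lambda)$; more precisely $\tilde z^k-\tilde T\tilde z^k = \frac{1}{1-\lambda}R\tilde z^k$ since $R\tilde z^k=\tilde z^k-T\tilde z^k=(1-\lambda)(\tilde z^k-\tilde T\tilde z^k)$. Substituting yields $\|\tilde z^{k+1}-y\|_M^2\le\|\tilde z^k-y\|_M^2-\frac{\lambda(1-\lambda)}{(1-\lambda)^2}\cdot(1-\lambda)^2\|\tilde z^k-\tilde T\tilde z^k\|_M^2$, which after bookkeeping is exactly $\|\tilde z^k-y\|_M^2-\lambda(1-\lambda)\|R\tilde z^k\|_M^2$. (One should double-check which normalization of the $a-b$ term the paper intends; the stated coefficient is $\lambda(1-\lambda)$, matching the expansion directly with $a-b=\tilde z^k-\tilde T\tilde z^k$ and $\|R\tilde z^k\|_M=(1-\lambda)\|\tilde z^k-\tilde T\tilde z^k\|_M$, so the constant actually works out to $\lambda/(1-\lambda)$ unless one rescales — I would reconcile this carefully, but the structure of the argument is unaffected.)

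Second, summing the Fej\'er inequality over $k=0,1,\dots,K$ telescopes to $\lambda(1-\lambda)\sum_{k=0}^{K}\|R\tilde z^k\|_M^2\le\|\tilde z^0-y\|_M^2-\|\tilde z^{K+1}-y\|_M^2\le\|\tilde z^0-y\|_M^2$. Letting $K\to\infty$, the series $\sum_k\|R\tilde z^k\|_M^2$ converges, so $\|R\tilde z^k\|_M\to0$; combined with the already-noted monotonicity of $\|R\tilde z^k\|_M$ (which follows because $T$ firmly-nonexpansive-type estimates give $\|R\tilde z^{k+1}\|_M\le\|R\tilde z^k\|_M$, or one can cite Theorem \ref{thm: convergence of PIPG} together with nonexpansiveness of $\tilde T$ applied to $R=I-T=\tfrac{1}{2}(I-\tilde T)$), this gives the second bullet. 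Third, for convergence of $\tilde z^k$ itself: the Fej\'er inequality shows $(\|\tilde z^k-y\|_M)_k$ is nonincreasing hence convergent for every $y\in\operatorname{Fix}(T)$, so $(\tilde z^k)$ is bounded; extract a subsequential limit $\bar z$; since $\|R\tilde z^k\|_M\to0$ and $R$ is continuous, $R\bar z=0$, i.e. $\bar z\in\operatorname{Fix}(T)$; then apply the standard Opial-type argument — $\|\tilde z^k-\bar z\|_M$ converges, and any two subsequential limits $\bar z_1,\bar z_2\in\operatorname{Fix}(T)$ must coincide by expanding $\|\tilde z^k-\bar z_1\|_M^2-\|\tilde z^k-\bar z_2\|_M^2=2\langle\tilde z^k,\bar z_2-\bar z_1\rangle_M+\text{const}$ and taking limits along both subsequences — to conclude the whole sequence converges to $\bar z$.

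The main obstacle is not conceptual but bookkeeping: getting the constant in the Fej\'er inequality to match the stated $\lambda(1-\lambda)$ requires being precise about the relation between $R\tilde z^k$, $\tilde z^k-\tilde T\tilde z^k$, and the cross term in the three-point expansion, and about whether $\|\cdot\|_M$ genuinely comes from the inner product $\langle\cdot,\cdot\rangle_M$ (it does, by Lemma \ref{thm: pd of M}, which is exactly why this Hilbert-space-style argument goes through verbatim). A secondary point needing care is justifying continuity and the monotonicity of $\|R\tilde z^k\|_M$ cleanly from $T=\tfrac12 I+\tfrac12\tilde T$ (the $\lambda=\tfrac12$ case of Theorem \ref{thm: convergence of PIPG}) versus the general $\lambda$ statement — I would simply run the whole argument at the level of generality of the hypothesis, since nothing uses $\lambda=\tfrac12$.
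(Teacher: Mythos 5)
Your proposal is correct and follows essentially the same route as the paper's proof: the three-point expansion of $\|\tilde z^{k+1}-y\|_M^2$ in the $M$-inner product, nonexpansiveness of $T$ for the monotonicity of $\|R\tilde z^k\|_M$, telescoping for summability, and a subsequence-plus-Fej\'er argument for convergence of the iterates. Your worry about the constant is well founded --- under the stated convention $T=\lambda I+(1-\lambda)\tilde T$ the coefficient of $\|R\tilde z^k\|_M^2$ works out to $\lambda/(1-\lambda)$ rather than $\lambda(1-\lambda)$ (the paper's own proof silently swaps $\lambda$ and $1-\lambda$ and ends with $\lambda\|R\tilde z^k\|_M^2$), but this is harmless since only $\lambda=\tfrac12$ is ever used and summability is all that matters downstream.
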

\begin{proof}
We start with some facts related to $ T$ and $\operatorname{\tilde T}$. Firstly, $T$ is nonexpansive as
\begin{equation}
\begin{aligned}
\|T x-T y\|_M & =\|(1-\lambda)(x-y)+\lambda(\tilde{T} x-\tilde{T} y)\|_M \\
& \leq(1-\lambda)\|x-y\|_M+\lambda\|\tilde{T} x-\tilde{T} y\|_M \\
& \leq\|x-y\|_M .
\end{aligned}
\end{equation}Secondly, $y \in \operatorname{Fix}(T)$ if and only if $y \in \operatorname{Fix}(\tilde T).$ This is proved by plugging $y$ into $ T = \lambda I + ( 1 - \lambda) \tilde T$.

For the first conclusion, we know that, by the convexity of norm function, \[
\begin{aligned}
\|Rx - Ry\|_M &= \|\lambda(x - y) + (1 - \lambda)(\tilde T x - \tilde T y)\|_M \\
&\leq \lambda\|x - y\|_M + (1 - \lambda) \|\tilde T x - \tilde T y\|_M \\
&\leq \|x - y\|_M.
\end{aligned}
\]
Therefore, $R$ is a Lipschitz continuous operator. 

Meanwhile, since $T$ is nonexpansive,
$$\begin{aligned}\left\|R \tilde z^{k+1}\right \|_M & =\|\left.\tilde z^{k+1}-T\tilde z^{k+1}\left\|_M=\right \| T \tilde z^{k}-T\tilde z^{k+1}\|_M\right. \\ & \leq\left\|\tilde z^{k+1}-\tilde z^k\right \|_M=\left\|T \tilde z^k - \tilde z^k\right \|_M=\left\|R \tilde z^k\right \|_M.\end{aligned}$$ This proves the second half of the conclusion. 

Now we work on the other conclusions. For arbitrary $y$,
\begin{equation*}
\|\tilde z^{k+1}-y\|_M^2=\|\left(1-\lambda\right)\left(\tilde z^k-y\right)+\lambda\left(\tilde T \tilde z^k-y\right)\|_M^2.
\end{equation*}
We can expand and simplify this equation:
\begin{align*}
\|\tilde z^{k+1}-y \|_M^2 
&= \left\|(1-\lambda)(\tilde z^k - y) + \lambda(\tilde T \tilde z^k - y) \right\|_M^2 \\
&= (1-\lambda)\|\tilde z^k - y\|_M^2 + \lambda\|\tilde T \tilde z^k - y\|_M^2 
- \lambda(1-\lambda)\|\tilde z^k - \tilde T \tilde z^k\|_M^2.
\end{align*}

Here, we have used the fact that
\begin{equation*}
\begin{aligned}
\left\langle\ \tilde z^k-y,\ \tilde T \tilde z^k-y \right\rangle_M &= \frac{1}{2}\big(\left\|\tilde T \tilde z^k-y\right \|_M^2 + \left\|\tilde z^k-y\right \|_M^2 - \left\|\tilde z^k-\tilde T \tilde z^k\right \|_M^2\big) .
\end{aligned}
\end{equation*}

If $\operatorname{Fix}(T)$ is nonempty, we plug in arbitrary $y$ from this fixed point set. Since $y \in \operatorname{Fix}(\tilde T)$ as well, we have
\begin{equation*}
\left\|\tilde T \tilde z^k-y\right \|_M^2=\left\|\tilde T \tilde z^k-\tilde T y\right \|_M^2 \leq\left\|\tilde z^k-y\right \|_M^2.
\end{equation*}
Thus
\begin{equation*}
\begin{aligned}
\left\|\tilde z^{k+1}-y\right \|_M^2 &\leq \left\|\tilde z^k-y\right \|_M^2-\lambda\left(1-\lambda\right)\left\|\tilde T \tilde z^k-\tilde z^k\right \|_M^2 \\
&= \left\|\tilde z^k-y\right \|_M^2-\lambda \left\|R \tilde z^k\right \|_M^2.
\end{aligned}
\end{equation*}

A summation of both sides of this inequality for all iterations will show that $\|R\tilde z^k\|^2$ is summable and thus it will converge to zero. At the meantime $\left\|\tilde z^{k}-y\right \|_M^2$ is decreasing in $k$ and thus $\tilde z^k$ is bounded. We can therefore find a subsequence $\tilde z^{k_i}$ converging to a point $\tilde z^\star$ by compactness. As $\| R \tilde z^{k_i}\|_M$ convergence to zero and $R$ is continuous, we have that $T\tilde z^\star = \tilde z^\star$. Since $\tilde z^\star$ is a fixed point, we plug in $\tilde z^\star$ in the place of $y$ and get $\left\|\tilde z^{k+1}-\tilde z^\star\right \|_M^2 \leq\left\|\tilde z^k-\tilde z^\star\right \|_M^2$
for all $k$, and $\lim _i\left\|\tilde z^{k_i}-\tilde z^\star\right \|_M^2=0$. Thus
$
\lim_n \tilde z^k=\tilde z^\star
$
\end{proof}

We now proceed to establish the convergence of Newton-PIPG and start with the local convergence result.
\begin{theorem}
\label{thm:local_convergence}
With Assumptions \ref{assumption licq}, \ref{assum: symmetric Jacobian}, and \ref{assumption: interior-point}, consider the unique fixed point $\tilde{z}^\star$ of the operator $T$. There exists a neighborhood $K$ of $\tilde{z}^\star$ such that $\nabla R(\tilde{z}) = I - J_T(\tilde{z})$ for any $\tilde{z} \in K$. Consider vector $p$ for $\tilde{z} \in K$ and 
$$
\nabla R(\tilde{z})  p = -R(\tilde{z}).
$$
There exist constants $C_0$ and $C_1$ such that 
\begin{align}
\|R(\tilde{z} + p)\|_M &\leq C_0 \|R(\tilde{z})\|_M^2 \label{loc_eq:first} \\
\left\|\tilde{z} + p - \tilde{z}^\star\right\|_M &\leq C_1 \left\|\tilde{z} - \tilde{z}^\star\right\|_M^2 \label{loc_eq:second}
\end{align}
\end{theorem}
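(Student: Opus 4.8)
The plan is to recognize the statement as the textbook quadratic-convergence estimate for Newton's method applied to the root-finding problem $R(\tilde z)=0$, with essentially all the genuine content already packaged in Theorem~\ref{thm: nonsingularity}. First I would invoke that theorem: under Assumptions~\ref{assumption licq}, \ref{assum: symmetric Jacobian}, and~\ref{assumption: interior-point} there is an open neighborhood $K_0$ of $\tilde z^\star$ on which $\pi_\mathbb{D}$ and $\pi_\K$ are smooth (so the chain-rule formula \eqref{equ: def_J_T} genuinely defines $J_T$ and the fallback branch $J_T=0$ is never triggered), $J_T$ is smooth, and $I-J_T$ is invertible. Hence $R=I-T$ is $C^\infty$ on $K_0$ with $\nabla R(\tilde z)=I-J_T(\tilde z)$, which is precisely the first assertion, and the Newton direction $p=-\nabla R(\tilde z)^{-1}R(\tilde z)$ is well defined for $\tilde z\in K_0$. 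I then fix a closed ball $\bar B\subseteq K_0$ centered at $\tilde z^\star$; by smoothness of $\nabla R$, compactness of $\bar B$, and equivalence of norms on finite-dimensional spaces, there are constants $L,B>0$ with $\|\nabla R(\tilde z)-\nabla R(\tilde z')\|_M\le L\|\tilde z-\tilde z'\|_M$ and $\|\nabla R(\tilde z)^{-1}\|_M\le B$ for all $\tilde z,\tilde z'\in\bar B$. Finally I take the neighborhood $K$ in the statement to be a small enough ball around $\tilde z^\star$ that, using $\|R(\tilde z)\|_M\to 0$ as $\tilde z\to\tilde z^\star$ and $\|p\|_M\le B\|R(\tilde z)\|_M$, the segments $\{\tilde z+tp:t\in[0,1]\}$ and $\{\tilde z^\star+s(\tilde z-\tilde z^\star):s\in[0,1]\}$ both stay inside $\bar B$.

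For \eqref{loc_eq:first} I would use the fundamental theorem of calculus to write $R(\tilde z+p)=R(\tilde z)+\int_0^1\nabla R(\tilde z+tp)\,p\,dt=\bigl(R(\tilde z)+\nabla R(\tilde z)p\bigr)+\int_0^1\bigl(\nabla R(\tilde z+tp)-\nabla R(\tilde z)\bigr)p\,dt$. The first bracket is zero because $\nabla R(\tilde z)p=-R(\tilde z)$, and the integrand has $\|\cdot\|_M$-norm at most $Lt\|p\|_M^2$, so $\|R(\tilde z+p)\|_M\le\frac{L}{2}\|p\|_M^2$. Combining with $\|p\|_M=\|\nabla R(\tilde z)^{-1}R(\tilde z)\|_M\le B\|R(\tilde z)\|_M$ gives $\|R(\tilde z+p)\|_M\le\frac{LB^2}{2}\|R(\tilde z)\|_M^2$, so one may take $C_0=LB^2/2$.

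For \eqref{loc_eq:second} I would use $R(\tilde z^\star)=0$ to write $\tilde z+p-\tilde z^\star=\nabla R(\tilde z)^{-1}\bigl(\nabla R(\tilde z)(\tilde z-\tilde z^\star)-R(\tilde z)\bigr)$ and $R(\tilde z)=\int_0^1\nabla R\bigl(\tilde z^\star+s(\tilde z-\tilde z^\star)\bigr)(\tilde z-\tilde z^\star)\,ds$, so that $\nabla R(\tilde z)(\tilde z-\tilde z^\star)-R(\tilde z)=\int_0^1\bigl(\nabla R(\tilde z)-\nabla R(\tilde z^\star+s(\tilde z-\tilde z^\star))\bigr)(\tilde z-\tilde z^\star)\,ds$; since the two arguments of $\nabla R$ are at $\|\cdot\|_M$-distance $(1-s)\|\tilde z-\tilde z^\star\|_M$, this has norm at most $\frac{L}{2}\|\tilde z-\tilde z^\star\|_M^2$, and multiplying by $\|\nabla R(\tilde z)^{-1}\|_M\le B$ gives $\|\tilde z+p-\tilde z^\star\|_M\le\frac{LB}{2}\|\tilde z-\tilde z^\star\|_M^2$, so one may take $C_1=LB/2$.

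The main obstacle will not be any single calculation but the neighborhood bookkeeping: ensuring that the Newton step and the line segments used in the two Taylor estimates remain inside $\bar B$, where smoothness, the Lipschitz bound on $\nabla R$, and the uniform bound on $\|\nabla R^{-1}\|_M$ are all available. This is handled by shrinking $K$ and exploiting continuity of $R$ at the root. Carrying the nonstandard norm $\|\cdot\|_M$ through all estimates is harmless by norm equivalence, and the matrix-induced operator norm $\|\cdot\|_M$ used above is exactly the one defined in Section~\ref{sec:global convergence}. In short, once Theorem~\ref{thm: nonsingularity} is granted, the remainder is the classical Newton argument.
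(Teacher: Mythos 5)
Your proposal is correct and follows essentially the same route as the paper's own proof: invoke Theorem~\ref{thm: nonsingularity} for local smoothness and invertibility of $\nabla R = I - J_T$, then run the standard Taylor/fundamental-theorem-of-calculus Newton estimates in the $\|\cdot\|_M$ norm, with your $B$ playing the role of the paper's $\mu^{-1}$ and your $L$ the role of $\gamma$, yielding the identical constants $C_0 = \gamma/(2\mu^2)$ and $C_1 = \gamma/(2\mu)$. The only cosmetic difference is that you are slightly more explicit about the neighborhood bookkeeping (ensuring the relevant line segments remain in the compact ball), which the paper leaves implicit.
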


\begin{proof}
According to Theorem \ref{thm: nonsingularity}, there exists a compact neighborhood $K$ such that $\nabla R(\tilde{z}) = I - J_T$ exists and is invertible. Within this neighborhood, $\nabla R(\tilde z)$ is a smooth function and hence a $\gamma$-Lipschitz function for some constant $\gamma > 0$ with respect to the norm $\|\cdot\|_M$. The invertibility and smoothness of $\nabla R$ ensure that there exist constants $\mu> 0$  such that for any vector $\xi\in \MR^{n_z + n_w}$ and any $\tilde{z} \in K$, we have 
\begin{equation}
\label{local: equ_mu}
   \mu\|\xi\|_M \leq \|\nabla R(\tilde{z})  \xi\|_M . 
\end{equation}
Then for any $\tilde{z} \in K$ and $p \in \MR^{n_z +  n_w}$ such that $\nabla R(\tilde{z})  p = -R(\tilde{z})$, we have
\begin{equation*}
\begin{aligned}
R\left(\tilde z+p\right) & =R\left(\tilde z\right)+\int_0^1 \nabla R\left(\tilde z+\theta p\right)  p\ d \theta \\
& =\int_0^1\left(\nabla R\left(\tilde z+\theta p\right)-\nabla R\left(\tilde z\right)\right) p\ d \theta
\end{aligned}
\end{equation*}
Hence,
\begin{equation*}
\begin{aligned}
\| R(\tilde{z}+p) \|_M & \leq \int_0^1\|p\|_M \|\nabla R(\tilde z +  \theta p)-\nabla R(\tilde{z})\|_M d \theta \\
& \leq \int_0^1\|p\|_M \gamma\|\theta p\|_M d \theta=\frac{\gamma}{2}\|p\|_M^2 \\
&\leq \frac{\gamma}{2\mu^2}\|R(\tilde{z})\|_M^2,
\end{aligned}
\end{equation*}
where the last inequality is a consequence of $\|R(z)\|_M= \|\nabla R(\tilde{z}) p\|_M \geqslant\mu\|p\|_M.$ Therefore, \cref{loc_eq:first} is satisfied with $C_0 = \gamma/2\mu^2.$

Next, we show the validity of \eqref{loc_eq:second}. Setting $\eta = \tilde z - \tilde z^\star$ and using that $R(\tilde z^\star) = 0$, we have 
\begin{equation*}
\begin{aligned}
 R(\tilde z) &= R\left(\tilde z\right)-R\left(\tilde z^\star\right)
 =\int_0^1 \nabla R\left(\tilde z-\theta \eta\right)  \eta d \theta \\
=&\int_0^1\left(\nabla R\left(\tilde z - \theta \eta\right)-\nabla R\left(\tilde z\right))  \eta d \theta+\nabla R\left(\tilde z\right) \eta\right.
\end{aligned}
\end{equation*}

Hence
\begin{equation*}
\begin{aligned}
p & =-( \nabla R(\tilde{z}))^{-1} R(\tilde{z}) \\
& =-\eta-\int_0^1\left(\nabla R\left(\tilde z\right)\right)^{-1}\left(\nabla R\left(\tilde z-\theta \eta\right)-\nabla R\left(\tilde z\right)\right) \eta d \theta.
\end{aligned}
\end{equation*}

Using $\eta=\tilde{z}-\tilde z^\star$, we have
\begin{equation*}
\begin{aligned}
& \left\|\tilde{z}+p-\tilde{z}^\star\right\|_M \\
\leqslant & \int_0^1\left\|(\nabla R(\tilde{z}))^{-1} \left(\nabla R\left(\tilde z-\theta \eta\right)-\nabla R\left(\tilde z\right)\right) \eta\right\|_M d \theta \\
\leqslant &\mu^{-1} \int_0^1\left\|\nabla R\left(\tilde z-\theta \eta\right)-\nabla R\left(\tilde z\right)\right\|_M \|\eta\|_M d \theta \\
\leqslant &\mu^{-1} \int_0^1 \gamma  \theta\|\eta\|_M^2 d \theta=\frac{ \gamma}{2\mu}\left\|\tilde{z}-\tilde{z}^\star\right\|_M^2
\end{aligned}
\end{equation*}
which proves equality \eqref{loc_eq:second}. Here, the second inequality follows from \eqref{local: equ_mu}, while the last inequality is a result of the Lipschitz continuity of $\nabla R(\tilde{z})$.\end{proof}

Now we prove the global convergence of Newton-PIPG:
\begin{theorem}
\label{main_convergence}
Suppose the conditions of Lemma \ref{thm: pd of M} hold and \( \operatorname{Fix}(T) \) is non-empty. If the algorithm runs indefinitely, disregarding the termination criteria in line 3 of Algorithm \ref{algo}, the following properties hold:
\begin{enumerate}
    \item $R \tilde{z}^{k} \rightarrow 0$.
    \item Given Assumptions \ref{assumption licq}, \ref{assum: symmetric Jacobian}, and \ref{assumption: interior-point}, if $\sigma$ is sufficiently large and $p_k$ is computed using \eqref{equ: newtonstep} for each iteration unless $I - J_T$ is singular, Newton-PIPG converges to the unique fixed point $\tilde z^\star$ of $T$, with a quadratic local convergence rate.
\end{enumerate}
\end{theorem}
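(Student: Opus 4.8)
The plan is to prove the two claims in turn, the common engine being that $\|R\tilde z^k\|_M$ is monotonically non-increasing under \emph{both} update rules. When iteration $k$ is a PIPG step, Theorem~\ref{thm: km-convergence} (applicable since the conditions of Lemma~\ref{thm: pd of M} are assumed, so $\|\cdot\|_M$ is a genuine norm and $T=\tfrac12 I+\tfrac12\tilde T$ with $\tilde T$ nonexpansive) gives $\|R\tilde z^{k+1}\|_M\le\|R\tilde z^k\|_M$; when it is an accepted Newton step the acceptance test of Algorithm~\ref{algo} gives $\|R\tilde z^{k+1}\|_M\le c\,\|R\tilde z^k\|_M$ with $c\in[0,1)$. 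Claim~1 then needs only the conditions of Lemma~\ref{thm: pd of M} and $\operatorname{Fix}(T)\neq\emptyset$: if only finitely many Newton steps are ever accepted, then from some index on the iteration is pure PIPG and $R\tilde z^k\to0$ is immediate from Theorem~\ref{thm: km-convergence}; if infinitely many are accepted, with indices $n_1<n_2<\cdots$, then since every iteration strictly between two consecutive Newton steps is a PIPG step we get $\|R\tilde z^{n_{j+1}}\|_M\le\|R\tilde z^{n_j+1}\|_M\le c\,\|R\tilde z^{n_j}\|_M$, hence $\|R\tilde z^{n_j}\|_M\le c^{\,j-1}\|R\tilde z^{n_1}\|_M\to0$, and monotonicity of $\|R\tilde z^k\|_M$ along the full sequence forces $R\tilde z^k\to0$.

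For claim~2, Assumptions~\ref{assumption licq}--\ref{assumption: interior-point} make $\tilde z^\star$ the \emph{unique} fixed point (Theorem~\ref{thm: extended-licq}), and Theorems~\ref{thm: nonsingularity} and~\ref{thm:local_convergence} furnish a compact neighborhood $K$ of $\tilde z^\star$ on which $\nabla R=I-J_T$ exists, is invertible with $\|(\nabla R(\tilde z))^{-1}\|_M\le\mu^{-1}$ (cf.~\eqref{local: equ_mu}), and the quadratic estimates~\eqref{loc_eq:first}--\eqref{loc_eq:second} hold for the Newton vector of~\eqref{equ: newtonstep}. I would first establish convergence of the \emph{iterates}. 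A PIPG step has $\|\tilde z^{k+1}-\tilde z^\star\|_M\le\|\tilde z^k-\tilde z^\star\|_M$ (Theorem~\ref{thm: km-convergence} with $y=\tilde z^\star$), while an accepted Newton step has $\|\tilde z^{k+1}-\tilde z^\star\|_M\le\|\tilde z^k-\tilde z^\star\|_M+\|p^k\|_M<\|\tilde z^k-\tilde z^\star\|_M+\sigma\|R\tilde z^k\|_M$; thus $\|\tilde z^k-\tilde z^\star\|_M$ can increase only at Newton steps, and by the geometric bound of claim~1 the cumulative increase is at most $\tfrac{\sigma}{1-c}\|R\tilde z^{n_1}\|_M<\infty$. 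Hence $\|\tilde z^k-\tilde z^\star\|_M$ is quasi-Fej\'{e}r monotone, hence convergent; in particular $\tilde z^k$ is bounded, so along a subsequence $\tilde z^{k_i}\to\bar z$, and continuity of $R$ with $R\tilde z^k\to0$ gives $R\bar z=0$, whence $\bar z=\tilde z^\star$. Therefore $\liminf_k\|\tilde z^k-\tilde z^\star\|_M=0$, and as $\|\tilde z^k-\tilde z^\star\|_M$ converges, $\tilde z^k\to\tilde z^\star$.

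To turn this into the local quadratic rate, pick $\epsilon>0$ with $\{\tilde z:\|\tilde z-\tilde z^\star\|_M\le\epsilon\}\subseteq K$, small enough that $C_1\epsilon<1$ and $C_0\sup\{\|R\tilde z\|_M:\|\tilde z-\tilde z^\star\|_M\le\epsilon\}\le c$ (possible by continuity of $R$ and $R\tilde z^\star=0$), and take $\sigma>\mu^{-1}$ — this is what ``$\sigma$ sufficiently large'' means. Since $\tilde z^k\to\tilde z^\star$, some $\tilde z^{K_0}$ lies in this ball. Whenever $\tilde z^k$ is in the ball, $I-J_T$ is invertible, so $p^k$ from~\eqref{equ: newtonstep} is the genuine Newton direction and satisfies $\|p^k\|_M\le\mu^{-1}\|R\tilde z^k\|_M<\sigma\|R\tilde z^k\|_M$ and, by~\eqref{loc_eq:first}, $\|R(\tilde z^k+p^k)\|_M\le C_0\|R\tilde z^k\|_M^2\le c\,\|R\tilde z^k\|_M$; both tests of Algorithm~\ref{algo} pass, the Newton update is taken, and~\eqref{loc_eq:second} gives $\|\tilde z^{k+1}-\tilde z^\star\|_M\le C_1\|\tilde z^k-\tilde z^\star\|_M^2\le(C_1\epsilon)\|\tilde z^k-\tilde z^\star\|_M<\epsilon$. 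Hence the iterate stays in the ball, the argument repeats for all $k\ge K_0$, and $\|\tilde z^{k+1}-\tilde z^\star\|_M\le C_1\|\tilde z^k-\tilde z^\star\|_M^2$, i.e.\ local quadratic convergence.

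The main obstacle is the middle step: passing from ``$R\tilde z^k\to0$'' to ``$\tilde z^k\to\tilde z^\star$'' despite Newton steps possibly moving the iterate away from the solution and $I-J_T$ possibly being singular far out. The resolution is to observe that the \emph{only} mechanism increasing $\|\tilde z^k-\tilde z^\star\|_M$ is a Newton step, that the safeguard $\|p^k\|_M<\sigma\|R\tilde z^k\|_M$ caps each such increase by $\sigma\|R\tilde z^k\|_M$, and that $\|R\tilde z^k\|_M$ decays geometrically along the Newton-step subsequence, so these increases are summable — a quasi-Fej\'{e}r argument. A secondary point requiring care is verifying that once the iterate is near $\tilde z^\star$ the Newton direction automatically passes \emph{both} acceptance tests of Algorithm~\ref{algo}, since this is exactly what hands control from the global PIPG safeguard to the local Newton phase and forces the requirement $\sigma>\mu^{-1}$.
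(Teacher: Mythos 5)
Your proposal is correct and follows essentially the same route as the paper's proof: use the PIPG non-expansiveness to make $\|R\tilde z^k\|_M$ monotone and the $c$-contraction at accepted Newton steps to force geometric decay (claim 1); then bound the cumulative drift of $\|\tilde z^k-\tilde z^\star\|_M$ by $\sigma\sum c^j\|R\tilde z^0\|_M<\infty$, extract a subsequence converging to the unique fixed point, and argue that once the iterate enters the neighborhood of Theorem~\ref{thm:local_convergence} both acceptance tests pass automatically and the quadratic estimate~\eqref{loc_eq:second} takes over. The only differences are cosmetic: you make the intermediate full-sequence convergence explicit via a quasi-Fej\'er argument (the paper stops at boundedness and passes directly from a converging subsequence to entry into the local ball), and you write the threshold as a strict $\sigma>\mu^{-1}$ rather than the paper's $\sigma\geq\sup_K\|\nabla R^{-1}\|_M$, which sidesteps a harmless edge case against the strict inequality in Algorithm~\ref{algo}.
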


\begin{proof}
For the first result, consider a proof by contradiction. Given Theorem \ref{thm: km-convergence} and the restriction on the Newton step, $\|R \tilde{z}^k\|_M$ is monotonically decreasing. If $R \tilde{z}^k$ doesn't converge to zero, it implies that the Newton step is eventually disabled, leading us to the same algorithm as described in Theorem \ref{thm: km-convergence}. The convergence results from Theorem \ref{thm: convergence of PIPG} and Theorem \ref{thm: km-convergence} then imply a contradiction. Hence, $R \tilde{z}^k \rightarrow 0$. 

For the second result, let $\tilde{z}^\star$ be the unique fixed point of the PIPG operator. The uniqueness of the fixed point is guaranteed by extended-LICQ and the strong convexity of Problem~\hyperref[equ: original problem]{1}. To start, we aim to show that $\|\tilde{z}^k - \tilde{z}^\star\|_M$ is bounded.

In the $k$-th iteration, if the PIPG update is used, then 
$$\left\|\tilde{z}^{k+1} - \tilde{z}^\star\right\|_M \leq \left\|\tilde{z}^k - \tilde{z}^\star\right\|_M$$
as per Theorem \ref{thm: km-convergence}. If the Newton update is accepted, the criteria in line 5 of Algorithm \ref{algo} ensures that
\begin{equation*}
\begin{aligned}
\left\|\tilde{z}^{k+1} - \tilde{z}^\star\right\|_M & - \left\|\tilde{z}^k - \tilde{z}^\star\right\|_M \leq \left\|\tilde{z}^{k+1} - \tilde{z}^k\right\|_M \\
& \leq \sigma \left\|R \tilde{z}^k \right\|_M \leq \sigma  c^{q_k} \left\|R \tilde{z}^0\right\|_M.
\end{aligned}
\end{equation*}

Here, $q_k$ is the number of times the Newton step has been activated since the first iteration. Coefficients $c$ and $\sigma$ are defined in Algorithm \ref{algo}.

Summing over $k$, we obtain 
$$\left\|\tilde{z}^n - \tilde{z}^\star\right\|_M \leq \left\|\tilde{z}^0 - \tilde{z}^\star\right\|_M + (\sigma \sum_{i=1}^\infty c^i) \left\|R \tilde{z}^0\right\|_M.$$
Given that $c < 1$, we infer that $\tilde{z}^k$ is bounded.

According to Theorem \ref{thm:local_convergence}, there exists a neighborhood $K$ such that equations \eqref{loc_eq:first} and \eqref{loc_eq:second} hold. Therefore, we can find an $M$-norm ball $K' \subset K$ such that for any $\tilde{z} \in K'$ and vector $p$ such that $\nabla R(\tilde{z})  p_k = -R(\tilde{z})$, we have 
\begin{equation*}
\begin{aligned}
\|R(\tilde{z} + p_k)\|_M & \leq c \|R(\tilde{z})\|_M \\
\left\|\tilde{z} + p_k - \tilde{z}^\star\right\|_M & \leq \left\|\tilde{z} - \tilde{z}^\star\right\|_M,
\end{aligned}
\end{equation*}
where $c$ is defined in Algorithm \ref{algo}. Hence, for any $\tilde{z}^k \in K'$, we have $\tilde{z}^k + p_k \in K'$. 

Therefore, if $\sigma$ is sufficiently large, specifically $\sigma \geq \sup_{z \in K} \|\nabla R^{-1}\|_M$, we will have $\|p_k\|_M \leq \sigma \|R(\tilde{z}^k)\|_M$ for all $\tilde z^k \in K$. 

By the compactness of $\tilde{z}^k$, the continuity of $R$, and that $R \tilde{z}^k \rightarrow 0$, there exists a converging subsequence of $\tilde{z}^k$ that converges to the unique fixed point $\tilde{z}^\star$. Then, when $\tilde{z}^k \in K'$, the Newton step will be accepted as it leads to a sufficient amount of decrease in $R(\tilde{z}^k)$, and $\|p_k\|_M \leq \sigma \|R(\tilde{z}^k)\|_M$. Further, $\tilde{z}^{k+1} \in K'$, ensuring that the PIPG will not be activated again. In that case, \eqref{loc_eq:second} in Theorem \ref{thm:local_convergence} ensures a quadratic convergence rate.\end{proof}

\begin{remark}
\label{rm: convergence within finite iter}
The quadratic convergence result in Theorem \ref{main_convergence} can be further strengthened to convergence within finite iterations if $J_\D$ and $J_\K$ are piecewise constant functions. This scenario occurs when $J_\D$ consists of boxes, halfspaces, and affine subspaces. In these cases, $J_T$ will also be a piecewise constant function of the primal and dual variables. Within a local neighborhood of the fixed point $(z^\star, w^\star)$, the PIPG operator will be a linear function, allowing the Newton step to find the fixed point within one iteration.
\end{remark}

\begin{remark}
The proof method in Theorem \ref{main_convergence} is inspired by \cite{themelis2019supermann}. The difference between the proof in Theorem \ref{main_convergence} and the convergence analysis in \cite{themelis2019supermann} is that we establish convergence for a hybrid approach combining the Newton method, instead of the quasi-Newton method, with the operator splitting method, thus achieving a better convergence rate. Furthermore, the invertibility of \( I - J_T \) guarantees the existence of the coefficient \( \sigma \) in our proof (corresponding to \( D \) in \cite[Assumption 2]{themelis2019supermann}), eliminating the requirement of \cite[Assumption 2]{themelis2019supermann}, which is not commonly used in optimization research.
\end{remark}

\section{Restrictions and Relaxation}
\label{sec: application and relaxation}
In this section, we point out several restrictions in Problem~\hyperref[equ: PIPG problem]{2} compared to general optimal control quadratic programming problems, and we discuss the possibility of relaxing such restrictions.

In Problem~\hyperref[equ: PIPG problem]{2}, we require $P$ to be a diagonal matrix with positive diagonal elements. This requirement ensures the invertibility of $I - J_\D + \alpha J_\D P$, which is necessary for the linear algebra technique outlined in Lemma \ref{lemma: simplification}. Although this choice of $P$ is suitable for many optimal control problems, it does not encompass all scenarios. In particular, it does not cover QPs arising in the sequential convexification of nonconvex optimal control problems that utilize the $L_1$-exact penalty \cite{malyuta2022convex, szmuk2020successive, elango2024successive}. When applying the $L_1$ exact penalty, transforming the linearized subproblems into QP form requires the introduction of slack variables. Consequently, the resulting QP no longer has a positive definite quadratic term; the quadratic matrix contains zeros on the diagonal for positions that correspond to these slack variables. To address this issue, we propose adding an additional squared $L_2$-norm to the penalty. This modification not only maintains the exactness but also ensures our algorithm's compatibility with the subproblems from the sequential convexification.

Besides the form of $P$, we also require $J_\D = J_\D ^\top$, $Q_\D P = P Q_\D$, and $\mathbb{K} = \prod_{i=0}^N (0^{n_{E_i}} \times \mathbb{R}_{+}^{n_{I_i}})$. Generalizing these constraints introduces several challenges. If $J_\D$ is not symmetric or $Q_\D P \neq P Q_\D$, the relationship $\left(I - J_\D (I - \alpha P)\right)^{-1} = V_\D$ in Lemma \ref{lemma: simplification} will no longer hold. Furthermore, if we generalize $\mathbb{K}$ to a general cone, $J_\K (I - J_\K) = 0$ will not be satisfied. In either case, the linear matrix appearing in the Newton step will not be symmetric. Consequently, the Cholesky decomposition will not be applicable, and the proof for the nonsingularity of $I - J_T$ will be invalid.

If generalization is necessary, one can directly compute the factorization for \(I - J_T\) at some Newton steps and use iterative methods such as GMRES\cite{saad1986gmres} for other Newton steps. In this approach, previous factorizations can be used as preconditioners for subsequent iterations.

\section{Implementation and Numerical Experiments}
\label{sec:Numerical implementation}
We demonstrate the implementation details of the Newton-PIPG algorithm and compare its performance with other state-of-the-art solvers. 

\subsection{Implementation Details for Newton-PIPG}
Algorithm \ref{algo} provides a framework for the Newton-PIPG algorithm, allowing flexibility in choosing the form of \(p^k\). This flexibility leads to different strategies for implementing the Newton-PIPG algorithm. Here are some strategies we use to optimize computation speed.

First, we reduce the use of Newton steps by applying them only when PIPG iterations stagnate. A heuristic waiting period, typically 3-10 PIPG iterations, is employed. If both \(J_\K\) and the active manifold of \(\pi_\D\) remain unchanged during this period, the algorithm switches to the Newton step.

Second, after each computationally intensive Newton step, a line search is performed to monitor the reduction in the PIPG residual \(R(z^k, w^k)\). This line search, typically executed 3-5 times per Newton step, improves practical convergence speed without affecting the convergence properties of the Newton-PIPG algorithm. The line search result that reduces the residual norm by a factor of \(c\), as defined in Algorithm \ref{algo}, is accepted. The coefficient \(c\) is generally close to 1; for instance, we use \(c = 0.99\). If the line search fails to sufficiently reduce the residual, the result is discarded, and a PIPG update is performed instead. Moreover, in the case of a failed line search, the waiting period is extended until either \(J_\K\) or the active manifold of \(\pi_\D\) is updated.

Third, to improve the conditioning of the optimization problem, we rescale the matrix \(H\) so that the norm of each row is of the same magnitude. This rescaling does not affect the optimal solution. For a detailed discussion, we refer interested readers to \cite{kamath2023customized}.

Fourth, \(I-J_T\) can become singular when the current iteration is not in a local neighborhood of the primal-dual solution pair \((z^\star, w^\star)\). To obtain useful results from the Newton step, we add an additional perturbation to the matrix \(I-J_T\). The perturbation we choose is an identity matrix multiplied by a coefficient proportional to the norm of the residual \(R(z^k, w^k)\), so that the perturbation becomes negligible when the current iteration is close to \((z^\star, w^\star)\).

Fifth, we monitor the convergence of the residual using the Euclidean norm instead of \(\|\cdot\|_M\), as the Euclidean norm is more efficient to compute. In our experiments, using the Euclidean norm did not compromise the algorithm's convergence. However, in situations where it might, one may consider using \(\|\cdot\|_M\) to validate the Newton step for a more robust implementation.

Finally, we optimize the matrix-vector multiplication by partitioning the sparse matrix \(H\) into zero blocks and dense, nonzero blocks. Instead of performing sparse matrix multiplication, we treat \(H\) as a collection of dense block matrices. Using for-loops, we compute the matrix-vector product as a series of dense matrix-vector multiplications. This approach is also applied when computing the factorization in the Newton step. A similar method is used in \cite{yu2020proportional}.

\subsection{Termination Criteria}
\label{sec: termination criteria}
In this section, we propose the termination criteria for both PIPG and Newton-PIPG. We claim that monitoring the magnitude of $\|R(z^k, w^k)\| = \|(z^{k+1}, w^{k+1}) - (z^k, w^k)\|$ provides a good criterion for stopping the PIPG algorithm. Here, $(z^k, w^k)$ and $(z^{k+1}, w^{k+1})$ are consecutive PIPG iterations. Since Newton-PIPG requires iteratively calling the PIPG operator, we can use the same termination criteria for the Newton-PIPG algorithm as well.

To start, we show that $\|(z^{k+1}, w^{k+1}) - (z^k, w^k)\|$ is a good indicator of the quality of $(z^{k+1}, w^{k+1})$ in terms of the KKT condition in Theorem \ref{thm: extended-licq}. Specifically, we set $\Delta z = z^{k+1} - z^k$ and $\Delta w = w^{k+1} - w^k$ and show that:
\begin{theorem}
\label{thm:stop_criteria}
When both $\|\Delta z\| \leq \epsilon$ and $\|\Delta w\| \leq \epsilon$ for $\epsilon > 0$, we have the following relationship:
\begin{equation*}
\begin{aligned} d_{N_\D\left(z^{k+1}\right)}\left(-P z^{k+1}- q - H^\top w^{k+1}\right)&\leq \left(\frac{1}{\alpha} + \|P\| + \|H^\top\|\right) \epsilon \\
d_{N_\K\left(w^{k+1}\right)}\left(H z^{k+1} - g\right) &\leq \left(\frac{1}{\beta} + \|H\|\right) \epsilon\\
 z^{k+1} \in \D &\ and \ w^{k+1} \in \K  \\
\end{aligned}
\end{equation*}
\end{theorem}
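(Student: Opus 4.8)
The plan is to read off, from each of the two projection steps in the PIPG update \eqref{PIPG iter}, a normal-cone inclusion, and then to bound the distance from the KKT residual evaluated at $(z^{k+1},w^{k+1})$ to that normal cone by the size of $(\Delta z,\Delta w)$. The membership statements $z^{k+1}\in\D$ and $w^{k+1}\in\K$ are immediate, since $z^{k+1}$ and $w^{k+1}$ are by definition images of the projections $\pi_\D$ and $\pi_\K$.

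For the primal residual, set $y^k := z^k-\alpha(Pz^k+q+H^\top w^k)$, so that $z^{k+1}=\pi_\D(y^k)$. The variational characterization of projection onto a closed convex set gives $y^k-z^{k+1}\in N_\D(z^{k+1})$, and since $N_\D(z^{k+1})$ is a cone and $\alpha>0$ we also have $u_k := \frac1\alpha(z^k-z^{k+1})-(Pz^k+q+H^\top w^k)\in N_\D(z^{k+1})$. Because $u_k$ lies in the cone, $d_{N_\D(z^{k+1})}\!\left(-Pz^{k+1}-q-H^\top w^{k+1}\right)\le \left\|\left(-Pz^{k+1}-q-H^\top w^{k+1}\right)-u_k\right\|$, and a direct computation (the $q$-terms cancel) shows this difference equals $\tfrac1\alpha\Delta z-P\Delta z-H^\top\Delta w$; the triangle inequality together with $\|\Delta z\|,\|\Delta w\|\le\epsilon$ yields the first estimate.

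For the dual residual the argument is parallel. Set $v^k := w^k+\beta\!\left(H(2z^{k+1}-z^k)-g\right)$, so $w^{k+1}=\pi_\K(v^k)$ and hence $\frac1\beta(v^k-w^{k+1})\in N_\K(w^{k+1})$; that is, $\tilde u_k := \frac1\beta(w^k-w^{k+1})+H(2z^{k+1}-z^k)-g\in N_\K(w^{k+1})$. Subtracting $\tilde u_k$ from $Hz^{k+1}-g$ and simplifying (the $g$-terms cancel and $Hz^{k+1}-H(2z^{k+1}-z^k)=-H\Delta z$) leaves $-H\Delta z+\frac1\beta\Delta w$, so $d_{N_\K(w^{k+1})}(Hz^{k+1}-g)\le\|H\|\,\|\Delta z\|+\tfrac1\beta\|\Delta w\|\le(\|H\|+\tfrac1\beta)\epsilon$.

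There is no substantive obstacle here: the estimate is essentially bookkeeping. The only points demanding care are using the correct orientation of the projection inclusion $y-\pi_C(y)\in N_C(\pi_C(y))$, exploiting that $N_\D$ and $N_\K$ are cones so that division by $\alpha$ and $\beta$ is harmless, and tracking which of $z^k,z^{k+1}$ (and $w^k,w^{k+1}$) appears in each term so that the constant and linear pieces cancel, leaving only the differences $\Delta z$ and $\Delta w$.
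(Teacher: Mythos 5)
Your proof is correct and follows essentially the same route as the paper's: you read the normal-cone inclusion off each projection step of \eqref{PIPG iter}, scale by $1/\alpha$ (resp.\ $1/\beta$) using the cone property, and bound the distance by the norm of the residual vector, which after cancellation reduces to a linear combination of $\Delta z$ and $\Delta w$. The only cosmetic difference is that the paper first substitutes $z^k=z^{k+1}-\Delta z$, $w^k=w^{k+1}-\Delta w$ into the projection arguments before extracting the normal-cone element, whereas you extract the element in terms of $(z^k,w^k)$ and then simplify; the algebra is identical.
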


\begin{proof}
The PIPG iterations \eqref{PIPG iter} trivially guarantees 
$z^{k+1} \in \D \ and \ w^{k+1} \in \K$. The PIPG iteration \eqref{PIPG iter} ensures that 
\begin{equation*}
\begin{aligned}
z^{k+1}=&\pi_\D[z^{k+1}-\Delta z - \alpha P(z^{k+1}-\Delta z)-\alpha\left(q+H^{\top}\left(w^{k+1}-\Delta w\right))\right] \\
 w^{k+1}=&\pi_{\K}\left[w^{k+1}-\Delta w+\beta\left(H\left(z^{k+1}+\Delta z\right)-g\right)\right]
\end{aligned}
\end{equation*}  
Using properties of projections to a convex set, we obtain
\begin{equation*}
\begin{aligned}
& P(\Delta z-{z^{k+1}})-q+H^{\top} (\Delta w - w^{k+1})-\frac{\Delta z }{\alpha}\in N_\D\left(z^{k+1}\right) \\
& H z^{k+1}-g+H \Delta z-\frac{1}{\beta} \Delta w \in N_{\K}\left(w^{k+1}\right).
\end{aligned}
\end{equation*}
Using the fact that $\|\Delta z\| \leq \epsilon$ and $\|\Delta w\| \leq \epsilon$, we achieve
\begin{equation*}
\begin{aligned}
d_{N_\D\left(z^{k+1}\right)}(-P z^{k+1}-q-H^T w^{k+1})&\leq \|P\Delta z+H^{\top} \Delta w-\frac{\Delta z }{\alpha}\|\\
&\leq\left(\frac{1}{\alpha}+\|P\|+\| H^{\top} \|\right) \varepsilon \\
\left.d_{N_\K\left(w^{k+1})\right.}\right.\left(H z^{k+1}-g\right)
&\leq \| H \Delta z -\frac{\Delta w}{\beta}\| \leq\left(\frac{1}{\beta}+\|H\|\right) \varepsilon.
\end{aligned}
\end{equation*}
\end{proof}

Setting $\gamma_p =  \frac{1}{\alpha} + \|P\| + \|H^\top\|$ and $\gamma_d = \frac{1}{\beta} + \|H\|$, Theorem \ref{thm:stop_criteria} indicates that $\gamma_p\|\Delta z\|$ and $\gamma_d\|\Delta w\|$ measure the KKT satisfaction for the current iteration $(z^k, w^k)$. Hence, we propose the following termination condition for both PIPG and Newton-PIPG: given an absolute tolerance $\epsilon_{abs}$ and a relative accuracy tolerance $\epsilon_{rel}$, the algorithm will be terminated if
\begin{equation*}
\begin{aligned}
\left\|z^{k+1} - z^k\right\| &\leq \frac{\epsilon_{\text{abs}} + \epsilon_{\text{rel}} \left\|P z^{k+1}+q+H^{\top} w^{k+1}\right\|}{\gamma_p}\label{equ: termination}\\
\left\|w^{k+1} - w^k\right\| &\leq \frac{\epsilon_{\text{abs}}+ \epsilon_{\text{rel}}\|H z^{k+1}-g\|}{\gamma_d}.   
\end{aligned}
\end{equation*}


\subsection{Numerical Experiment Introduction}
We test the performance of the Newton-PIPG algorithm via two example problems. The first example is an oscillating masses problem that involves only linear constraints. The second example is a powered-descent guidance problem that includes various types of constraints, such as linear inequality, ball constraints and second-order cones. We compare the results against benchmark algorithms such as ECOS \cite{domahidi2013ecos, andersen2000mosek} and OSQP \cite{stellato2020osqp}, among others.

The numerical experiment is conducted using MATLAB. The Newton-PIPG code was initially written in MATLAB and then converted into executable C code using MATLAB Coder. The computational experiments were carried out on a computer equipped with an AMD Ryzen 7 5700G 8-Core CPU and 16GB of RAM. The results here are generated with code in: \scalebox{0.85}{\href{https://github.com/UW-ACL/NEWTON-PIPG-for-QP-Problems}{\texttt{github.com/UW-ACL/NEWTON-PIPG-for-QP-Problems}}}.

\subsection{Oscillating Masses}
We use a benchmark oscillating masses problem \cite{wang2009fast,jerez2014embedded} to evaluate the performance of our algorithm in comparison to other state-of-the-art algorithms. Consider a mechanical system comprising $m = 16$ masses connected in a one-dimensional line, shown in figure \ref{fig: mass-spring model}. Each mass is defined by its 1-D velocity and position, resulting in a total of $n_x = 16$ state variables. To control this mechanical system, we assign one control variable to each mass. Hence the dimension of control at each time point is $n_u = 8.$ Time is discretized using a step size of $\Delta$. The total number of time points is denoted by N, and we set $\Delta = 3/N,$ i.e. this equation describes a system among $[0,3]$. In our experiment, we set $N = 20, 50, 100$ to show the scalability of our algorithm. 
\begin{figure}[!ht]
\centering
\begin{adjustbox}{scale=0.6}

\begin{circuitikz}

\pattern[pattern=north east lines] (-0.2, -0.8) rectangle (0, 0.8);
\draw[thick] (0, -0.8) -- (0, 0.8);
\draw (0, 0) to[spring] (1, 0);
\draw (1, -0.5) rectangle (2, 0.5);
\draw (2, 0) to[spring] (3, 0);
\draw (3, -0.5) rectangle (4, 0.5);
\draw (4, 0) to[spring] (5, 0);

\node at (5.5, 0) {\huge $\ldots$};

\draw (6, 0) to[spring] (7, 0);
\draw (7, -0.5) rectangle (8, 0.5);
\draw (8, 0) to[spring] (9, 0);
\draw (9, -0.5) rectangle (10, 0.5);
\draw (10, 0) to[spring] (11, 0);

\pattern[pattern=north east lines] (11, -0.8) rectangle (11.2, 0.8);
\draw[thick] (11, -0.8) -- (11, 0.8);
\end{circuitikz}

\end{adjustbox}
\caption{The oscillating masses system}
\label{fig: mass-spring model}
\end{figure}
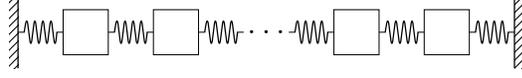

After discretization, we have the following optimization problem:
\begin{equation*}
\label{numerical problem}
\begin{array}{ll}
\underset{u, x}{\operatorname{minimize}} & \frac{1}{2} \sum_{i=0}^{N+1} x_i^{\top}P x_i+\frac{1}{2} \sum_{i=0}^{N} u_i^{\top} Q u_i \\
\text { subject to } & x_{i+1}=A x_i+B u_i,\quad i\in \intset{0}{N},\\
& u_{min} \leq u_i \leq u_{max}, \quad i\in \intset{0}{N} \\
&x_{min} \leq x_i \leq x_{max}, \quad i\in \intset{0}{N+1}\\
& x_0=\hat{x}_0.
\end{array}
\end{equation*}
Here, the matrices $P$ and $Q$ are identity matrices and matrices $A$ and $B$ are defined as
$$
\begin{aligned}
& A=\exp \left(\Delta\left[\begin{array}{cc}
0_{m \times m} & I_m \\
-\mathbb{L} & 0_{m \times m}
\end{array}\right]\right), \\
& B=\int_0^{\Delta} \exp \left(s\left[\begin{array}{cc}
0_{m \times m} & I_m \\
-\mathbb{L} & 0_{m \times m}
\end{array}\right]\right) \mathrm{d} s\left[\begin{array}{c}
0_{m \times m} \\
I_m
\end{array}\right],
\end{aligned}
$$
where the matrix $\mathbb{L} \in \mathbb{R}^{m \times m}$ is a tridiagonal matrix with 2's on the diagonal and -1's on the subdiagonal and superdiagonal entries. 

The initial conditions $\hat{x}_0$ are randomly generated. In our experiments, we set $x_{\text{min}} = -1$ and $x_{\text{max}} = 1$. The initial condition is generated from a normal distribution with mean zero and a standard deviation of 0.3 then projected to $[x_\text{min}, x_\text{max}]$. This ensures that the values of $x$ are spread within the range of $[-1, 1]$, without too many values equal to -1 or 1.

In our numerical experiments, we compared our algorithm with several prominent solvers: OSQP \cite{stellato2020osqp}, SCS \cite{odonoghue:21}, SCS-Anderson \cite{zhang2020globally}, PIPG \cite{yu2022extrapolated}, and ECOS \cite{domahidi2013ecos}, using their respective MATLAB packages. OSQP and SCS are ADMM-based methods. SCS-Anderson represents methods that combine operator-splitting operators with quasi-Newton methods. Lastly, ECOS stands as an example of an interior-point method.

Since our current algorithm does not have feasibility detection capacity, we only consider problems with feasible solutions. In practice, we run OSQP and SCS before the Newton-PIPG and Newton-PIPG will only be used to solve problems that are characterized as feasible by both OSQP and SCS. 

%
For our numerical experiments, we set the tolerance level of all competing solvers to $10^{-8}$ in their respective software settings, and for PIPG we set $\epsilon_{abs} = 10^{-8},$ and $\epsilon_{rel} = 0$. The Newton-PIPG algorithm does not require a specific termination criterion, as it converges within a finite number of iterations when the optimization problem is linear. Post-computation, we measure and report the difference between each solver's solution and that of Newton-PIPG using the $L_2$ norm.

 We set $(u_{\text{min}}, u_{\text{max}}) \in \{(-1, 1), (-0.4, 0.4)\}$ and then randomly generate 100 initial conditions for each combination of $N$ and $(u_{\text{min}}, u_{\text{max}})$. The average time consumption is shown in Table \ref{table: numerical result}, measured in milliseconds. In this table, the SCS-And refers to the SCS-Anderson algorithm, while the N-PIPG denotes the Newton-PIPG. These time consumption are reported by the Matlab package of each solver. When the maximum control magnitude is set to 1, we observed that all randomly generated problems are feasible. When the control magnitude is set to 0.4, approximately 95\% of the randomly generated problems are feasible, and we only apply the Newton-PIPG algorithm to these problems.

When set to $(u_{\text{min}}, u_{\text{max}}) =(-1 ,1)$ and $N = 20$, our algorithm outperforms others by a factor of five. Such a ratio persists with larger problem sizes. With respect to the post-computation error tolerance, ECOS attains $10^{-5}$ in $L_2$ norm in all cases. At $N= 20,$ all other solver achieves $10^{-8}$ in accuracy. At $N = 50$ and $N = 100$, algorithms other than PIPG and ECOS achieve accuracy of $10^{-6}$. PIPG reaches the target accuracy for all cases.

The Newton-PIPG algorithm achieves rapid QP solution primarily because the PIPG step rapidly identifies the correct active constraints of $\mathbb{D}$ in situations where the constraints are not tight. Thus, it usually requires a single Newton step and a small number of PIPG steps to solve the problem. To evaluate scenarios where more Newton steps are needed, we set $(u_{\text{min}}, u_{\text{max}}) =(-0.4 ,0.4)$. With more inequality constraints activated, more Newton steps are expected. As predicted, Newton-PIPG slows down but is still 30\% faster than the best of the other algorithms.

We also compare Newton-PIPG with SCS-Anderson, as both methods hybridize operator-splitting and second-order approaches. SCS-Anderson does speed up SCS, but the increase is limited. However, adding the Newton step to PIPG dramatically improves its speed, both at $u = 1$ and $u = 0.4$. This indicates that the Newton step has better performance compared to quasi-Newton methods. 

We presented a typical plot (Figure \ref{fig:newtonpipgvspipg}) comparing the computation time between the Newton-PIPG method and the PIPG method. The plot tracks the computation time and the norm of the residual after each PIPG and Newton step, with \(N = 20\) and \((u_{\text{min}}, u_{\text{max}}) = (-0.4, 0.4)\). The y-axis represents the logarithm of the norm of the difference between two consecutive iterations, and the x-axis represents the solve time in milliseconds. Figure \ref{fig:newtonpipgvspipg} shows that the Newton step is activated twice and is more efficient compared to PIPG iterations.

\begin{figure}[htbp]
    \centering
      \includegraphics[width=0.8\textwidth]{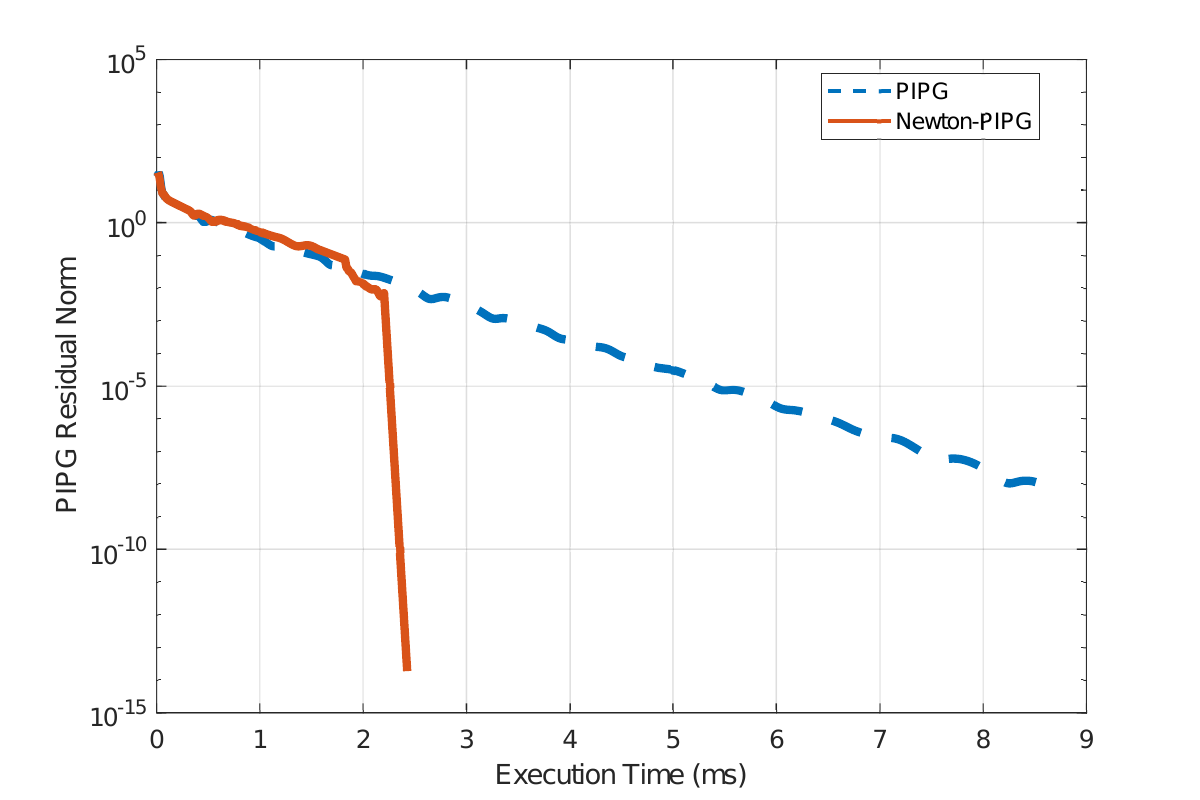}
    \caption{Comparison of residuals versus solve time for Newton-PIPG and PIPG algorithms for the oscillating masses example.}
    \label{fig:newtonpipgvspipg}
\end{figure}

\begin{table*}[htbp]
\centering
\caption{Performance for different method on the oscillating mass example (All times in milliseconds.)}
\label{table: numerical result}
\begin{tabular}{cccccccc}
\hline
\textbf{Umax} & \textbf{N} & \textbf{SCS-And} & \textbf{OSQP} & \textbf{SCS} & \textbf{ECOS} & \textbf{PIPG} & \textbf{N-PIPG} \\
\hline
1    & 20  & 3.48  &  {3.47}  & 3.93 & 16.84 & 9.42  & \textbf{0.61} \\
0.4  & 20  & 6.97  &  {3.74}  & 7.22 & 17.30 & 8.75  & \textbf{1.53} \\
1    & 50  & 14.88 &  {11.48} & 14.25 & 40.80 & 33.73 & \textbf{2.10} \\
0.4  & 50  & 21.02 &  {10.41} & 20.96 & 40.91 & 43.14 & \textbf{7.68} \\
1    & 100 &  {46.72} & 72.82 & 73.11 & 83.45 & 149.36 & \textbf{4.88} \\
0.4  & 100 & 51.50 &  {25.47} & 52.71 & 84.05 & 161.99 & \textbf{16.28} \\
\hline
\end{tabular}
\end{table*}

\subsection{Powered-Descent Guidance Problem}
Our second numerical example considers a powered-descent guidance (PDG) problem for soft-landing a rocket-powered vehicle on a planetary body. Specifically, we consider the reference trajectory tracking problem from \cite{elango2022customized}. This problem can be formulated to fit within the form of Problem~\hyperref[equ: original problem]{1} and can be solved using the Newton-PIPG method. We use this problem to demonstrate the efficiency of Newton-PIPG on complex problems with various types of constraints and to compare Newton-PIPG with interior-point solvers, which are the only methods among those used in the previous numerical example capable of directly solving such problems.

In the PDG problem, the rocket-powered vehicle is initially located at \(r_\text{init} \in \mathbb{R}^3\) with an initial velocity \(v_\text{init} \in \mathbb{R}^3\). The goal is to land this vehicle at the origin with a final velocity of zero. There are seven state variables describing the problem at each time point: three variables for location, three variables for velocity, and one variable for the log of the vehicle mass. Meanwhile, there are four control variables at each time point: three variables for a three-dimensional thrust and one slack variable to implement the lossless convexification technique. In our numerical problem, we choose the amount of time points as 30.

Constraints of the PDG problem include second-order cone constraints on the four-dimensional control variable and the location variables separately, ball constraints on velocity, linear equalities for the dynamics, linear inequalities for the relationship between log-mass and thrust, and box constraints on log-mass. For the exact form and discussion of this problem, refer to \cite[Problem 3]{elango2022customized}.

Compared to the original form of \cite[Problem 3]{elango2022customized}, we made several modifications. First, since some dimensions of the optimal solution are a few magnitudes larger than others, we adjusted the quadratic coefficients in the objective function to ensure that each dimension of the optimal solution contributes similarly to the objective function. This adjustment is primarily to enhance the performance of the interior-point method. Without this change, the solution obtained by the interior-point method exhibited large relative errors on some dimensions of the solution, making the comparison between the interior-point method and Newton-PIPG less meaningful.

Secondly, we retained most of the coefficients listed in \cite[Table 3]{elango2022customized}, modifying only \(r_\text{init}\), the initial location of the vehicle. We set \(r_{\text{init}_{(1)}} = 0\, \text{m}\)  and \(r_{\text{init}_{(3)}} = 2000\, \text{m}\), while \(r_{\text{init}_{(2)}}\) follows an arithmetic sequence from 0 m to 2900 m, with a common difference of 50 m. The optimization problem becomes infeasible when \(r_{\text{init}_{(2)}}\) exceeds 2950 m. As the second component of \(r_\text{init}\) increases, the problem approaches infeasibility, allowing us to test our algorithm in both scenarios where feasibility is easily guaranteed and where the problem is nearly infeasible.

In the numerical experiment, we compared the performance of Newton-PIPG with ECOS and PIPG. To use ECOS, we employed Yamlip \cite{Lofberg2004} to transform the original problem into a cone programming problem where ECOS is applicable. For the termination conditions, the termination criteria were set to \(\epsilon_{\text{abs}} = 10^{-12}\) and \(\epsilon_{\text{rel}} = 0\) for Newton-PIPG. Similarly, we set the accuracy to \(10^{-12}\) for ECOS. The post-computation error for ECOS, compared to the Newton-PIPG result, ranged from \(3 \times 10^{-6}\) to \(5 \times 10^{-7}\) for different choices of \(r_\text{init}\). For PIPG, we used two different termination criteria: \(\epsilon_{\text{abs}} = 10^{-4}\) and \(\epsilon_{\text{abs}} = 10^{-8}\), with \(\epsilon_{\text{rel}} = 0\). The computed errors for PIPG showed that the algorithm achieved the assigned accuracy.

The wall time of the three algorithms under different initial conditions is shown in Figure \ref{fig:computation_time}, with a logarithmic time axis. In this figure, the line labeled "PIPG low accuracy" represents the results of the PIPG algorithm with a tolerance of \(10^{-4}\), while "PIPG high accuracy" corresponds to a tolerance of \(10^{-8}\). For \(r_\text{init}\) values greater than 2250 meters in the second dimension, PIPG did not converge within 50,000 iterations. As a result, we used hollow markers to represent these non-converging points, which are not explicitly mentioned in the legend.
The line labeled "ECOS" shows the wall time reported by Yamlip, excluding Yamlip's compile time.

\begin{figure}[htbp]
    \centering
    \includegraphics[width=0.8\textwidth]{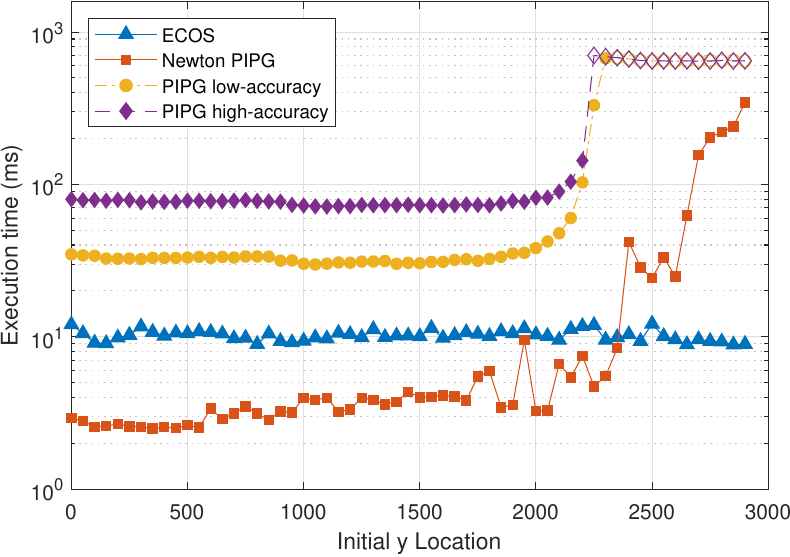}
    \caption{Comparison of execution times for ECOS, Newton-PIPG, and PIPG with low accuracy (tolerance \(10^{-4}\)) and high accuracy (tolerance \(10^{-8}\)) across different initial y locations. Hollow markers represent non-converged points, which are omitted from the legend.}
    \label{fig:computation_time}
\end{figure}

Comparing the computation speed between Newton-PIPG and ECOS, we observed that Newton-PIPG is notably faster than ECOS when the problem is not close to infeasibility. However, as the problem approaches infeasibility, especially when the second dimension of $r_\text{init}$ is larger than 2400 meters, the computation time of Newton-PIPG increases dramatically, while the speed of the interior-point methods appears unaffected by the proximity to infeasibility. Therefore, our numerical experiments suggest that Newton-PIPG is preferred when a highly accurate solution is required or when the problem is not close to infeasibility.

Figure \ref{fig:solvetime_vs_residual} compares the residuals of Newton-PIPG and PIPG versus wall time, with \( r_\text{init} = (0, 0, 2000) \) meters, using the same method as Figure \ref{fig:computation_time}. Newton-PIPG requires six Newton steps to converge below the threshold of \( 10^{-12} \), with the last three Newton steps contributing most to the significant decrease in the residual norm, consistent with theoretical expectations.
\begin{figure}[htbp]
    \centering
    \includegraphics[width=0.8\textwidth]{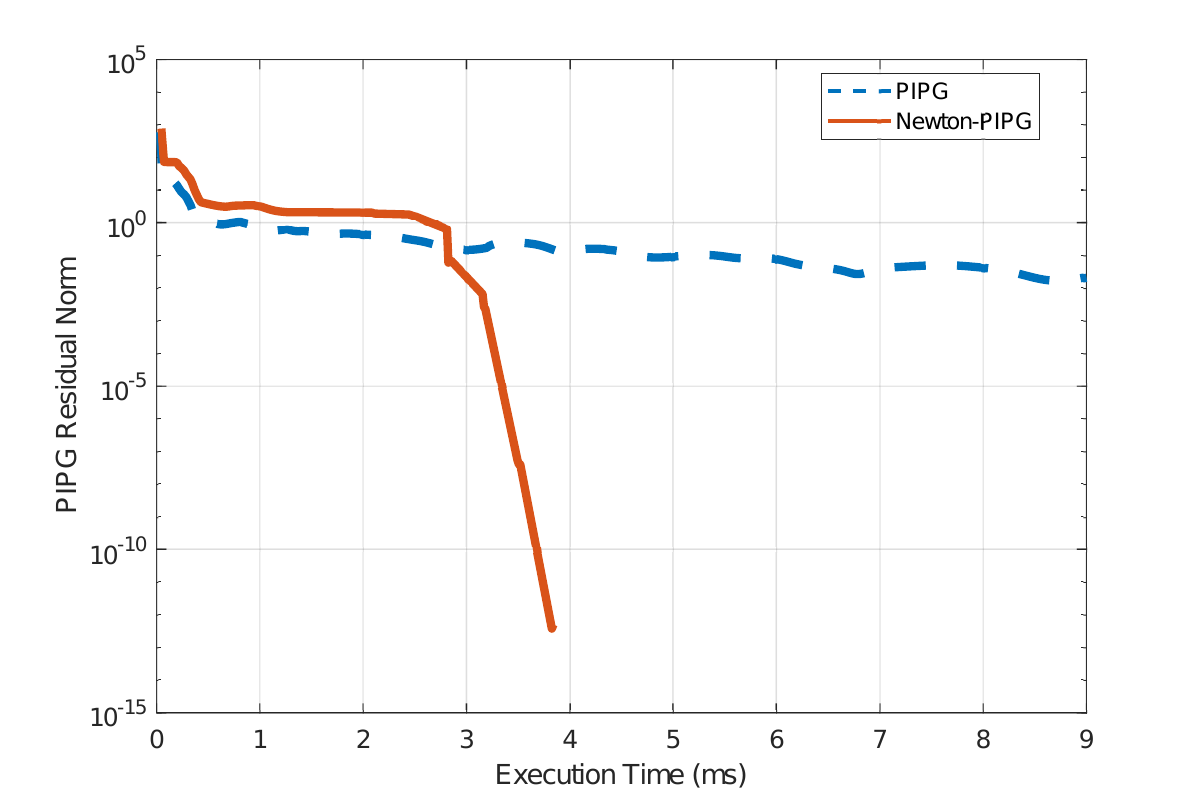}
    \caption{Comparison of residuals versus solve time for Newton-PIPG and PIPG algorithms for the PDG example, with \( r_\text{init} = (0, 0, 2000) \) meters.}
    \label{fig:solvetime_vs_residual}
\end{figure}

\section{Conclusion and Future Work}
In conclusion, we introduced the Newton-PIPG method for solving optimal control QP problems, which combines an operator splitting method, PIPG, with second-order Newton steps. We demonstrated the convergence of this algorithm and provided an efficient technique for solving the linear system in the Newton step. Our numerical experiments showed that our algorithm performs well compared to other state-of-the-art algorithms in solving quadratic optimal control problems.

For future research, we will incorporate infeasibility detection for Newton-PIPG and extend our algorithm to handle more general constraints. Additionally, the current Newton-PIPG software is written in Matlab, and we expect that a pure C/C++ implementation could further reduce computation time.





\bibliographystyle{siamplain} 

\bibliography{refs}%
\end{document}